\definecolor{darkspringgreen}{rgb}{0.14, 0.7, 0.3}
\definecolor{melon}{rgb}{0.4, 0.2, 1}
\newtheorem{thm}{Theorem}[section]
\newtheorem{cor}[thm]{Corollary}
\newtheorem{prop}[thm]{Proposition}
\newtheorem{lem}[thm]{Lemma}
\newtheorem{conj}[thm]{Conjecture}
\theoremstyle{definition}
\newtheorem{defn}[thm]{Definition}
\newtheorem{exmp}[thm]{Example}
\theoremstyle{remark}
\newtheorem{rem}[thm]{Remark}
\newtheorem{obs}[thm]{Observation}
\let\c@equation\c@thm
\numberwithin{equation}{section}
\newcommand*\bigcdot{\mathpalette\bigcdot@{.5}}
\newcommand*\bigcdot@[2]{\mathbin{\vcenter{\hbox{\scalebox{#2}{$\m@th#1\bullet$}}}}}
\def\subsection{\@startsection{subsection}{3}%
  \z@{.5\linespacing\@plus.7\linespacing}{.1\linespacing}%
  {\bfseries}}
\def\@tocline#1#2#3#4#5#6#7{\relax
  \ifnum #1>\c@tocdepth 
  \else
    \par \addpenalty\@secpenalty\addvspace{#2}%
    \begingroup \hyphenpenalty\@M
    \@ifempty{#4}{%
      \@tempdima\csname r@tocindent\number#1\endcsname\relax
    }{%
      \@tempdima#4\relax
    }%
    \parindent\z@ \leftskip#3\relax \advance\leftskip\@tempdima\relax
    \rightskip\@pnumwidth plus4em \parfillskip-\@pnumwidth
    #5\leavevmode\hskip-\@tempdima
      \ifcase #1
       \or\or \hskip 1em \or \hskip 2em \else \hskip 3em \fi%
      #6\nobreak\relax
    \hfill\hbox to\@pnumwidth{\@tocpagenum{#7}}\par
    \nobreak
    \endgroup
  \fi}
\newlength{\cellsize}
\newcommand\tableau[1]{
\vcenter{
\let\\=\cr
\baselineskip=-16000pt
\lineskiplimit=16000pt
\lineskip=0pt
\halign{&\tableaucell{##}\cr#1\crcr}}}
\newcommand{\tableaucell}[1]{{%
\def \arg{#1}\def \void{}%
\ifx \void \arg
\vbox to \cellsize{\vfil \hrule width \cellsize height 0pt}%
\else
\unitlength=\cellsize
\begin{picture}(1,1)
\put(0,0){\makebox(1,1)[c]{$#1$}}
\put(0,0){\line(1,0){1}}
\put(0,1){\line(1,0){1}}
\put(0,0){\line(0,1){1}}
\put(1,0){\line(0,1){1}}
\end{picture}%
\fi}}
\newcommand{\C}{\mathbb{C}}
\newcommand{\R}{\mathbb{R}}
\newcommand{\Z}{\mathbb{Z}}
\newcommand{\calH}{\mathcal{H}}
\newcommand{\Th}{\text{th}}
\DeclareMathOperator{\Kostka}{\sf Kostka}
\DeclareMathOperator{\Par}{{\sf Par}}
\DeclareMathOperator{\GL}{GL}
\DeclareMathOperator{\cut} {\setminus}
\DeclareMathOperator{\cyclic} {\Delta}
\newcommand{\Kpoly}{\text{P}^{\text{\sf Kostka}}}
\title{On Faces and Hilbert Bases of Kostka Cones}
\author[Burcroff]{Amanda Burcroff}
\address{Department of Mathematics, Harvard University, Cambridge, MA}
\email{aburcroff@math.harvard.edu}
\begin{document}

\begin{abstract}
Kostka coefficients appear in the representation theory of the general linear group and enumerate semistandard Young tableaux of fixed shape and content.  The $r$-Kostka cone is the real polyhedral cone generated by pairs of partitions with at most $r$ parts, written as non-increasing $r$-tuples, such that the corresponding Kostka coefficient is nonzero.  We provide several results showing that its faces have interesting structural and enumerative properties.  We show that the $d$-faces of the $r$-Kostka cone can be determined from those of the $(3d+1)$-Kostka cone, allowing us to characterize its $2$-faces and enumerate its $d$-faces for  $d \leq 4$.  We provide tight asymptotics for the number of $d$-faces for arbitrary $d$ and determine the maximum number of extremal rays contained in a $d$-face for $d < r$.  We then make progress towards a generalization of the Gao-Kiers-Orelowitz-Yong Width Bound on initial entries of partitions $(\lambda,\mu)$ appearing in the Hilbert basis of the $\lambda_1$-Kostka cone.  We show that at least $93.7\%$ of integer pairs $\lambda_1 \geq \mu_1 > 0$ appear as the initial entries of partitions $(\lambda,\mu)$ comprising a Hilbert basis element of the $r$-Kostka cone for every $r > \lambda_1$.  We conclude with a conjecture about a curious $h$-vector phenomenon.
\end{abstract}

\maketitle
\tableofcontents

\addtocontents{toc}{\protect\setcounter{tocdepth}{1}}

\section{Introduction}\label{sec: introduction}
\subsection{Background}
The $r$-Kostka cone, denoted by $\Kostka_r$, is the real polyhedral cone generated by pairs $(\lambda,\mu) \in \R^{2r}$ of non-increasing $r$-tuples of equal sum such that, for all $1 \leq i < r$, the sum of the first $i$ parts of $\lambda$ is at least the sum of the first $i$ parts of $\mu$.  It is directly connected to the well-known \emph{Kostka numbers}, which in turn have connections to Young tableaux \cite{littlewood1938construction}, representation theory \cite{fulton2013representation}, symmetric functions \cite{kostka1882ueber}, dimer configurations \cite{korff2017dimers}, and supergravity theories \cite{taylor2019generic}.

The integral points of the $r$-Kostka cone are precisely the pairs $(\lambda,\mu)$ of integer partitions with at most $r$ parts such that the Kostka number $K_{\lambda,\mu}$ is positive.  Carl Kostka introduced Kostka numbers in 1882 while studying symmetric function expansions \cite{kostka1882ueber}.  Kostka numbers are hard to compute in general, as their computation is ${\sf \# P}$-complete \cite{narayanan2006complexity}.  Kostka numbers also appear in the representation theory of the general linear group. By Young's Rule, the Kostka number $K_{\lambda,\mu}$ is the multiplicity with which the weight $\mu$ appears in the irreducible representation of $\GL_r(\C)$ with highest weight $\lambda$.  It is also the coefficient of the monomial symmetric function corresponding to $\mu$ in the expansion of the Schur polynomial corresponding to $\lambda$.  See, \cite[Chapter 7]{Stanley_EC2_1999} for a more thorough history of Kostka numbers and \cite{fulton2013representation} for details on the representation-theoretic perspective.

Slicing the $r$-Kostka cone by the affine hyperplane $\{x \in \R^{2r}: (1,1,\dots,1) \cdot x = 1\}$ yields a $(2r-2)$-dimensional polytope, which we call the \emph{Kostka polytope} and denote by $\Kpoly_r$.  There are numerous other polytopes defined in terms of partitions, the faces of which have previously been shown to have interesting enumerative properties.  The \emph{Fibonacci polytopes}, or \emph{ordered partition polytopes},  have vertex sets satisfying a Fibonacci-like recurrence \cite{rispolifibonacci} and are related to alternating permutations \cite{stanley2010survey}.  For the family of \emph{unordered partition polytopes}, Shlyk gave a description of the dynamic behavior of the vertices and a characterization of the facets \cite{SHLYK20051139}.  Each unordered partition polytope is combinatorially equivalent to a face of $\Kpoly_r$, and computational evidence suggests that both polytope families share a curious $h$-vector phenomenon \cite{wangnumber} (see \autoref{sec: further directions}).  

Several recent works on the Kostka cone have focused on its Hilbert basis and extremal rays. In 2021, Gao, Kiers, Orelowitz, and Yong \cite{gao2021kostka} gave a criterion for Hilbert basis membership, though they show that this decision problem is ${\sf NP}$-complete in general.  They use this criterion to give a simple description of the extremal rays and a ``Width Bound'' on the integer pairs $(\lambda_1,\mu_1)$ that can be the first parts of partitions $\lambda,\mu$ forming a Hilbert basis element $(\lambda,\mu)$ of the $r$-Kostka cone for $r \leq \lambda_1$.   Kim has since provided a strengthening of this Width Bound via a study of generalized Dyck paths \cite{kim2021kostka}.  Similar studies have also been carried out in other Lie types.  Besson, Jeralds, and Kiers \cite{besson2021vertices} took a representation-theoretic approach to enumerate the rays of the \emph{generalized Kostka cones} of types $D_r$ and $E_r$, where type $A_r$ is the classical case handled in \cite{gao2021kostka}.

\subsection{Results}

Our work focuses on studying the faces and Hilbert basis of the $r$-Kostka cone $\Kostka_r$, with a focus on enumerative and structural properties. We typically refer to $r$-Kostka polytope $\Kpoly_r$ instead of the Kostka cone when discussing the face structure, as $d$-faces of $\Kpoly_r$ are naturally identified with $(d+1)$-faces of $\Kostka_r$.  We begin by studying the maximum number of vertices contained in a face of fixed dimension (see \autoref{cor: max face size}).  

\begin{thm}\label{thm: max face size}
For $r > d + 1$, the maximum number of vertices contained in a $d$-face of the polytope $\Kpoly_r$ is 
$\displaystyle\prod_{i=1}^3 \left\lfloor \frac{d+2+i}{3} \right \rfloor$,
which is the maximum product of three positive integers summing to $d+3$.
\end{thm}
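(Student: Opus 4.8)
The plan is to recast the quantity as a purely combinatorial optimization over the faces of $\Kpoly_r$, solve that optimization, and then exhibit faces that attain the bound. Throughout I use the description of the faces of $\Kpoly_r$ (equivalently, of the extremal rays of $\Kostka_r$ and the incidences among them) together with the reduction sending $d$-faces of $\Kostka_r$ to those of $\Kostka_{3d+1}$. The reduction has two consequences I will use: first, since $\Kpoly_{r'}$ appears as a face of $\Kpoly_r$ for $r' < r$ (restrict to the first $r'$ coordinates), the maximum number of vertices of a $d$-face is nondecreasing in $r$; second, the reduction shows this maximum is already attained, and stabilizes, once $r$ exceeds a bound linear in $d$, so it suffices to analyze faces of bounded rank. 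The target value will turn out to be $\max\{n_1 n_2 n_3 : n_1 + n_2 + n_3 = d + 3,\ n_i \in \Z_{>0}\}$, realized by faces that are combinatorially products of three simplices $\Delta^{a}\times\Delta^{b}\times\Delta^{c}$ with $a + b + c = d$ and $n_i = a_i + 1$.

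For the lower bound I would construct, for each triple $(a,b,c)$ of nonnegative integers with $a+b+c = d$ and each $r > d+1$, an explicit $d$-face of $\Kpoly_r$ isomorphic to $\Delta^{a}\times\Delta^{b}\times\Delta^{c}$. The idea is to separate the $r$ rows into three consecutive groups by forcing the dominance inequalities to be tight at the two group boundaries, and inside each group to restrict the supports of $\lambda$ and $\mu$ so that the resulting block contributes an independent simplex of the prescribed dimension; the extremal rays lying in the face are then precisely the combinatorially independent choices, one per block, giving $(a+1)(b+1)(c+1)$ vertices. Verifying that this locus is genuinely a face of the stated dimension, that the imposed equalities are independent and cut out exactly a product of simplices using the extremal rays classified above, is the technical content of this direction, and $r > d+1$ guarantees there are enough rows to carry out the separation.

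The upper bound is the \emph{main obstacle}. Here I must show that the vertex count of any $d$-face is bounded by $\max_{a+b+c=d}(a+1)(b+1)(c+1)$. The essential point, and the place where the constant $3$ enters, is that the face structure forbids products of more than three simplices of positive dimension: a product of $k$ edges (a $k$-cube) is a face only for $k \le 3$, and more generally the vertex count of a $d$-face is governed by a factorization into at most three parts. I would prove this by passing through the reduction to $\Kostka_{3d+1}$ to restrict to bounded rank, and then analyzing the incidence structure of the extremal rays contained in a fixed face: grouping them according to the blocks cut out by the tight dominance inequalities, and bounding block by block how the number of contained extremal rays trades against the dimension they contribute. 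The heart of the argument is establishing that at most three blocks can simultaneously contribute nontrivial simplex factors, so that the global vertex count never exceeds a three-fold product; this is exactly the structural input that the reduction to the $(3d+1)$-Kostka cone is designed to supply.

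Finally, granting the reduction to maximizing $n_1 n_2 n_3$ over positive integers with $n_1 + n_2 + n_3 = d+3$, the remaining step is elementary. If two of the $n_i$ differ by at least $2$, transferring a unit from the larger to the smaller strictly increases the product while preserving the sum; hence the maximum is attained at the balanced triple, whose entries are the three nearly-equal integers $\lfloor (d+3)/3\rfloor,\lfloor(d+4)/3\rfloor,\lfloor(d+5)/3\rfloor$. This matches $\prod_{i=1}^{3}\lfloor (d+2+i)/3\rfloor$ and, since these three integers sum to $d+3$, identifies the formula with the maximum product of three positive integers summing to $d+3$, yielding \autoref{cor: max face size} as a consequence. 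A quick comparison, namely that a single simplex contributes only $d+1$ vertices and a product of two contributes at most $\lfloor (d+2)^2/4\rfloor$, confirms that using all three factors is what achieves the maximum.
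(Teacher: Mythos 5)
Your high-level strategy --- show the answer is the maximum product of three positive integers summing to $d+3$, match it with an explicit construction, and finish with the elementary balancing argument --- is the same as the paper's, and your final optimization step is correct. But both substantive steps locate the ``three-fold'' structure in the wrong place, and this is a genuine gap. You attribute the factor of three to ``blocks cut out by the tight dominance inequalities,'' i.e.\ to the hyperplanes $J_j$. That is not where it comes from. Imposing $J_{j_1}\cap J_{j_2}$ forces every vertex label $(a,b,\ell)$ with $a\ne b$ to have its open interval $(\ell,a)$ contained in a single block (\autoref{prop: hyp rules}), so the vertices of such a face are \emph{partitioned} among the blocks: their number is a sum, not a product, and the resulting face is join-like, with only on the order of $d+1$ vertices. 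The extremal faces in the paper's proof impose no nontrivial $J_j$ at all: one takes $\bigcap_{i<z_1}H_i\cap\bigcap_{j\ge z_1+z_2}H_j\cap\bigcap_{k=z_1}^{r-z_3}\widehat H_k$, which forces the three label coordinates $\ell$, $b$, $a$ of any vertex into three disjoint intervals of sizes $z_1,z_2,z_3$ over which they range \emph{independently}; that independence of the three parameters in the ray labels of \autoref{prop: extremal rays} is the source of the product $z_1z_2z_3$.

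For the upper bound --- which you rightly call the main obstacle --- your plan (``at most three blocks can simultaneously contribute nontrivial simplex factors,'' to be supplied by the reduction to bounded rank) is a restatement of the goal rather than an argument, and the reduction to $\Kpoly_{3d+3}$ is in fact not used for this step in the paper (it is used for the enumeration results; the stabilization $m(r,d)=m(d)$ for $r>d+1$ is a \emph{consequence} of the matching bounds, not an input). The actual mechanism is a direct count: write the $d$-face as a minimal intersection of $2r-1-d$ facets indexed by sets $I$ (type $H$), $J$ (type $J$), $K$ (type $\widehat H$); a vertex label must then have $b\notin I$, have $a,\ell\notin K$, and have no element of $J$ strictly between $\ell$ and $a$; hence the number of vertices is at most $(r-|I|)\cdot\lfloor\tfrac{r+1-|J|-|K|}{2}\rfloor\cdot\lceil\tfrac{r+1-|J|-|K|}{2}\rceil$, three factors summing to exactly $2r+1-|I|-|J|-|K|=d+3$. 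Without an argument of this kind, tying the number of admissible values of each of $a$, $b$, $\ell$ separately to the number of facet constraints of each type, the claimed bound does not follow from the block analysis you describe.
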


We then characterize the edges of $\Kpoly_r$ using a connection to cells of the braid arrangement.  As is explained in \autoref{sec: prelims}, the vertices of $\Kpoly_r$ can be labeled by integer triples, and the edge characterization is given in terms of certain inequalities on the vertex labels (\autoref{thm: 2d faces}).  By reducing the $d$-face structure of $\Kpoly_r$ to that of $\Kpoly_{3d+3}$ (\autoref{thm: face recursion formula}), we can provide exact formulas for the number of $d$-faces of $\Kpoly_r$ for $d = 1,2,3$.  
\begin{thm}\label{thm: face numbers}
        The number of edges of $\Kpoly_r$ is
        $$f_1(r) = \binom{r}{6} + 2\binom{r}{5} + 6\binom{r}{4} + 7\binom{r}{3} + 3\binom{r}{2}\,,$$
        the number of two-dimensional faces of $\Kpoly_r$ is
\begin{align*}
f_2(r) = \binom{r}{9} + 3\binom{r}{8} + 12\binom{r}{7} + 23\binom{r}{6} + 33\binom{r}{5} + 31\binom{r}{4} + 13\binom{r}{3} + \binom{r}{2}\,,\end{align*}
and the number of three-dimensional faces of $\Kpoly_r$ is
\begin{align*}
f_3(r) = \binom{r}{12} &+ 4\binom{r}{11} + 19\binom{r}{10} + 49\binom{r}{9} + 105\binom{r}{8} + 163\binom{r}{7} + 177\binom{r}{6}\\
&+ 131\binom{r}{5} + 53\binom{r}{4} + 7\binom{r}{3}\,.\end{align*}
\end{thm}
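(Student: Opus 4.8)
The plan is to show that $f_d(r)$ is a polynomial in $r$ whose expansion in the binomial basis $\binom{r}{k}$ has coefficients given by the number of full-support $d$-faces of the small polytopes $\Kpoly_k$, and then to compute those coefficients for $d = 1,2,3$. Concretely, define the \emph{support} of a $d$-face $F$ to be the set of positions in $[r] = \{1,\dots,r\}$ occurring among the integer-triple labels of the vertices of $F$. By \autoref{thm: face recursion formula}, the $d$-face structure of $\Kpoly_r$ is controlled by that of $\Kpoly_{3d+3}$; interpreting this as a statement about supports, every $d$-face has support of size at most $3d+3$, and the order-preserving relabeling sending a support $S = \{i_1 < \dots < i_k\}$ to $\{1,\dots,k\}$ carries the $d$-faces of $\Kpoly_r$ supported on $S$ bijectively onto the $d$-faces of $\Kpoly_k$ with full support (support equal to all of $\{1,\dots,k\}$).

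Granting this, let $g_d(k)$ denote the number of $d$-faces of $\Kpoly_k$ with full support, so that $g_d(k) = 0$ for $k > 3d+3$. Since the faces of $\Kpoly_r$ are partitioned by their supports, and there are $\binom{r}{k}$ subsets of $[r]$ of size $k$, each carrying exactly $g_d(k)$ faces, I obtain
$$ f_d(r) = \sum_{k=0}^{3d+3} \binom{r}{k}\, g_d(k). $$
This already shows $f_d$ is a polynomial of degree at most $3d+3$ and explains the shape of the stated formulas, whose top term is $\binom{r}{3d+3}$ in each case. The coefficients are recovered by binomial inversion on the Boolean lattice,
$$ g_d(k) = \sum_{j=0}^{k} (-1)^{k-j}\binom{k}{j}\, f_d(j), $$
so it remains only to determine the face numbers $f_d(j)$ of the small polytopes $\Kpoly_j$ for $j \le 3d+3$.

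For the edge count $f_1$ this means enumerating the edges of $\Kpoly_j$ for $j \le 6$, which I would carry out directly from the edge characterization of \autoref{thm: 2d faces}: list the vertex-labeling triples of $\Kpoly_j$, and for each pair test the braid-arrangement inequalities that decide adjacency. For $f_2$ and $f_3$ one enumerates the $2$- and $3$-faces of $\Kpoly_j$ for $j \le 9$ and $j \le 12$ respectively; here the bound on the number of vertices per face from \autoref{thm: max face size} (at most $4$ vertices for a $2$-face and at most $8$ for a $3$-face) keeps the search finite, and the reduction theorem guarantees that only these finitely many small polytopes need be examined. Substituting the resulting values $g_d(k)$ into the displayed expansion yields the three polynomials.

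The main obstacle is the explicit computation of the small-case face numbers $f_d(j)$, particularly for $d = 3$: the relevant polytope $\Kpoly_{12}$ has dimension $2\cdot 12 - 2 = 22$, and identifying exactly which collections of up to eight vertices span a genuine $3$-face requires organizing the adjacency and incidence data carefully, realistically with computer assistance. A secondary subtlety is confirming that the support bound $3d+3$ is exactly right and that the order-preserving relabeling is a clean bijection onto full-support faces, so that no $d$-faces are double-counted or omitted when passing between $\Kpoly_r$ and the $\Kpoly_k$ with $k \le 3d+3$; this is precisely where \autoref{thm: face recursion formula} must be invoked in its sharp form.
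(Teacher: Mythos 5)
Your proposal is correct and is essentially the paper's own argument: the paper proves that $f_d(r)=\sum_{k}\alpha_k\binom{r}{k}$ with $\alpha_k=0$ for $k>3d+3$ (Lemma \ref{cor: face polynomial degree}, via Lemmas \ref{lem: order isom face} and \ref{lem: parameter bound}), recovers the coefficients from the values $f_d(0),\dots,f_d(3d+3)$ by the recursion of Theorem \ref{thm: face recursion formula} --- which is exactly your binomial inversion in recursive form --- and computes those small-case values by computer (Table \ref{table: face numbers}). The only caveat is cosmetic: the vertex labels take values in $\{0,\dots,r\}$ rather than $[r]$, so your ``support'' bijection needs the order-isomorphism lemma rather than a literal relabeling of $[r]$, but this does not affect the algebra, which is forced by the degree bound alone.
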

These face counting functions have positive integer coefficients in terms of the polynomial basis $\binom{r}{k}_{k \geq 0}$, and we show that this property holds in all dimensions.  We also determine that the coefficient of the top degree term $\binom{r}{3d+3}$ is always $1$, yielding precise asymptotics for the number of $d$-faces. 

The main result of the last section concerns the Hilbert basis of $\Kostka_r$.  We say that an integer pair $(\lambda_1,\mu_1)$ is \emph{$r$-initial} if there is an element $(\lambda,\mu)$ in the Hilbert basis of $\Kostka_r$ such that $\lambda$ has first element $\lambda_1$ and $\mu$ has first element $\mu_1$.  The Width Bound of Gao-Kiers-Orelowitz-Yong \cite[Theorem 1.4]{gao2021kostka} implies that $(\lambda_1,\mu_1)$ is $\lambda_1$-initial if and only if $\lambda_1$ and $\mu_1$ are coprime.  We provide several sufficient conditions for a pair $(\lambda_1,\mu_1)$ to be $(\lambda_1 + 1)$-initial, and these conditions hold for over $93.7\%$ of integer pairs $\lambda_1 \geq \mu_1$. 

\begin{thm}\label{thm: initial main}
If any of the following conditions hold:
\begin{itemize}
\item $\lambda_1$ and $\mu_1$ are coprime \cite[Theorem 1.4]{gao2021kostka}, or
\item $\lambda_1 + 1$ and $\mu_1$ are coprime, or
\item $\lambda_1 + 1$ and $\mu_1 + 1$ are coprime with $2\mu_1 \geq \lambda_1$,
\end{itemize}
then the pair $(\lambda_1, \mu_1)$ is $(\lambda_1 + 1)$-initial.  Moreover, this holds even if we consider only Hilbert basis elements on the $2$-faces of $\Kostka_r$.
\end{thm}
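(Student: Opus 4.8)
The plan is to construct, for each of the three coprimality hypotheses, an explicit Hilbert basis element $(\lambda,\mu)$ of $\Kostka_{\lambda_1+1}$ whose initial entries are $(\lambda_1,\mu_1)$, and to arrange this element to lie on a $2$-face. I would first recall the Gao-Kiers-Orelowitz-Yong membership criterion for the Hilbert basis, which expresses $(\lambda,\mu)$ as belonging to the Hilbert basis precisely when $(\lambda,\mu)$ cannot be written as a nonnegative integer combination of strictly smaller lattice points in $\Kostka_r$ (equivalently, it lies on an extremal ray of a suitable face and is primitive). The key reduction is that a pair sits on a $2$-face of $\Kostka_r$ exactly when the combined partition data has small support in the sense governing faces of $\Kpoly_r$ from \autoref{thm: 2d faces}; so if I build candidates using only a bounded number of distinct part-sizes, membership on a $2$-face should follow from that characterization together with primitivity.

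The first bullet is already supplied by \cite[Theorem 1.4]{gao2021kostka}, so the work is in the second and third bullets, and the main device is the following \emph{lifting} idea. Suppose $(\lambda_1,\mu_1)$ is given. I would look for a partition pair living in $r = \lambda_1+1$ parts of the shape where $\lambda$ uses the single large part $\lambda_1$ together with a controlled number of small parts filling out the remaining $r-1$ slots, and similarly for $\mu$ with first part $\mu_1$. The equal-sum constraint and the dominance inequalities $\sum_{i\le k}\lambda_i \ge \sum_{i\le k}\mu_i$ then translate into elementary numerical conditions on the small parts. When $\lambda_1+1$ and $\mu_1$ are coprime, I expect to be able to choose the tail of $\mu$ to be a multiple of $\mu_1$ and the tail of $\lambda$ to be a multiple of $\lambda_1+1$, so that the Bezout relation between $\lambda_1+1$ and $\mu_1$ forces any nontrivial decomposition to fail integrality, certifying primitivity. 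The third bullet, with $\lambda_1+1$ and $\mu_1+1$ coprime under $2\mu_1\ge\lambda_1$, should be handled by a parallel but shifted construction, where the extra inequality $2\mu_1\ge\lambda_1$ is exactly what guarantees the dominance conditions remain satisfied after the shift.

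In more detail, for each construction I would carry out three checks in order: (i) verify membership in $\Kostka_{\lambda_1+1}$ by confirming the equal-sum and all partial-sum dominance inequalities; (ii) verify that $(\lambda,\mu)$ is a Hilbert basis element, i.e.\ it is primitive and indecomposable, using the coprimality hypothesis to rule out any splitting into smaller lattice points via the GKOY criterion; and (iii) verify the element lies on a $2$-face by matching its support pattern to the edge/$2$-face inequalities on vertex labels from \autoref{thm: 2d faces}, keeping the number of distinct parts minimal. Because every Hilbert basis element on a $2$-face is in particular $(\lambda_1+1)$-initial, step (iii) subsumes the weaker initiality conclusion and gives the ``Moreover'' clause for free.

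The hard part will be step (ii): ensuring genuine indecomposability rather than mere primitivity on a ray, since the Hilbert basis of a cone that is not simplicial can contain lattice points that are primitive yet still decompose over the full cone. I anticipate needing the precise form of the GKOY membership criterion to reduce indecomposability to a finite check governed by the coprimality of $\lambda_1+1$ with $\mu_1$ (resp.\ $\mu_1+1$), and the cleanest route is to show any putative decomposition would produce a shorter lattice vector with first coordinates $(\lambda_1', \mu_1')$ satisfying a forbidden divisibility relation. The secondary difficulty is bookkeeping the partial-sum inequalities in the shifted third case, where I expect the constraint $2\mu_1 \ge \lambda_1$ to be the exact threshold; verifying tightness there, and confirming that all three families can be realized simultaneously on $2$-faces, is where the routine-but-delicate computation lives.
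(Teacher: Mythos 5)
Your plan follows the paper's broad strategy (explicit constructions for the second and third bullets, irreducibility via coprimality, a $2$-face certificate from having few distinct part sizes), but as written it has genuine gaps. First, no actual partitions are produced: the shape you sketch for $\lambda$ --- a single large part $\lambda_1$ plus a tail of assorted small parts --- is in tension with the $2$-face requirement, since a point lies on a $2$-face of $\Kostka_r$ only when it sits inside $2r-3$ of the bounding hyperplanes of \autoref{obs: bounding hyperplanes}, which forces one of $\lambda,\mu$ to be a rectangle and the other to have at most two distinct part sizes (this is exactly \autoref{lem: two parts face}). The paper's constructions in \autoref{thm:: gcd1} and \autoref{thm:: gcd2} take $\lambda=(\lambda_1^{\mu_1})$ with $\mu$ having two part sizes, and $\mu=(\mu_1^{\lambda_1+1})$ with $\lambda$ having the two part sizes $\lambda_1,\lambda_1-1$, respectively; the hypothesis $2\mu_1\geq\lambda_1$ enters as the multiplicity $2\mu_1-\lambda_1+1>0$ of the part $\lambda_1$, not as a dominance threshold. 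Relatedly, \autoref{thm: 2d faces} is the wrong tool for step (iii): it tells you which pairs of extremal rays span a $2$-face, not how to certify that a given lattice point lies on one; the hyperplane count is the direct route.

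Second, and more seriously, you flag indecomposability as ``the hard part'' and then leave it unresolved, while also misstating the criterion you would use: being in the Hilbert basis is \emph{not} equivalent to being primitive on an extremal ray of some face (the whole content of the Width Bound discussion is that Hilbert basis elements generally do not lie on extremal rays). The mechanism that actually closes this step is the column criterion of \autoref{rem: reducible condition}: $(\lambda,\mu)$ is reducible if and only if some nontrivial proper subset of the columns of $\lambda$ has total size equal to that of a subset of the columns of $\mu$. The constructions are rigged so that every column of one partition has size congruent to $r$ (resp.\ to $\mu_1+1$) modulo $\mu_1$ (resp.\ $\lambda_1+1$), while every column subset of the other partition has total size divisible by that modulus; coprimality then rules out any proper matching. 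Your appeal to ``Bezout'' and a ``forbidden divisibility relation'' gestures in this direction but does not supply the argument, so the proof is incomplete as it stands.
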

The first criterion follows directly from the work of Gao-Kiers-Orelowitz-Yong, while the latter two conditions are the result of new constructions of Hilbert basis elements.  We conclude with a new observation that, for small $r$, half of the $h$-vector entries for $\Kostka_r$ are $1$, and we conjecture that this holds in general.

\subsection{Outline}
We begin by providing some preliminaries on the Kostka cone and Kostka polytope in \autoref{sec: prelims}.  We study the maximum number of vertices contained in a face of the Kostka polytope in \autoref{sec: max face}.  The edge characterization of the Kostka polytope is in \autoref{sec: edge description}, and the enumerative results on the faces of fixed dimension are in  \autoref{sec: enumeration}.  The construction of Hilbert basis elements is discussed in \autoref{sec: hilbert}, with some computation relegated to the \hyperref[app: initial pair]{Appendix}.  We conclude with a discussion of further directions in \autoref{sec: further directions}.

\addtocontents{toc}{\protect\setcounter{tocdepth}{0}}

\section*{Acknowledgements}
This work was completed in part at the 2022 Graduate Research Workshop in Combinatorics, which was supported in part by NSF grant $\#$1953985, and a generous award from the Combinatorics Foundation.  This paper is the result of many fruitful discussions with Shiliang Gao and Sheila Sundaram.  The author deeply thanks Shiliang Gao for suggesting this topic at the 2022 GRWC and for his helpful contributions throughout the course of the project.  She is extremely grateful to Sheila Sundaram, as this project would not have been possible without her insight and generous support. The author also thanks Margaret Bayer, Yibo Gao, Jeremy Martin, and Tyrrell McAllister for their early contributions to this project.  She appreciates the comments of Richard Stanley and Charles Wang on the $h$-vector phenomenon discussed in the Further Directions section. The author extends her thanks to Niven Achenjang for helping to compute the probability in \autoref{cor: initial prob} and to Joshua Kiers for sharing the code used to discover \autoref{exmp: initial counterexample}. 

\addtocontents{toc}{\protect\setcounter{tocdepth}{1}}

\section{Preliminaries}\label{sec: prelims}
\subsection{The Kostka Cone}
For positive integers $r$ and $n$, we denote the set of integer partitions of $n$ into at most $r$ parts by ${\Par}_r(n)$, where such partitions are written as non-increasing $r$-tuples. Each partition can be viewed as a Young diagram, where the length of the $i^{\Th}$ row is the $i^{\Th}$ entry of the $r$-tuple.  

Consider two partitions $\lambda = (\lambda_1,\dots,\lambda_r)$ and $\mu = (\mu_1,\dots,\mu_r)$ in $\Par_r(n)$.  A \emph{semistandard tableau of shape $\lambda$ and content $\mu$} is a filling of the Young diagram corresponding to $\lambda$ with integer entries such that the rows are non-decreasing to the right, the columns strictly increase downward, and there are precisely $\mu_i$ boxes with entry $i$ for all $1 \leq i \leq r$.  These are counted by the \emph{Kostka coefficient} $K_{\lambda,\mu}$. 

\begin{exmp}
The Kostka coefficient $K_{(4,2),(2,2,1,1)}$ is equal to $4$, as shown by the following four tableaux of shape $(4,2)$ and content $(2,2,1,1)$.
$$\tableau{1  & 1 &2 &2 \\ 3 &4 } \qquad \tableau{1  &1  &2  &3 \\ 2 &4 }\qquad \tableau{1  &1 &2 &4 \\ 2 &3 }\qquad \tableau{1  &1 &3 &4 \\ 2 &2 }$$
\end{exmp}

There is a well-known condition for when a Kostka coefficient is nonzero.  This occurs precisely when $\lambda$ \emph{dominates} $\mu$, i.e.,
$$\sum_{i=1}^k \lambda_i \geq \sum_{j=1}^k \mu_j \text{ for all } k \leq r\,.$$
This is denoted by $\lambda\geq_{\sf Dom} \mu $, and this ordering on partitions is called the \emph{dominance order} (also known as the \emph{majorization order} or \emph{natural order}) \cite[Section 7.10]{Stanley_EC2_1999}.

\begin{defn}
The $r$-Kostka cone is the $(2r-1)$-dimensional polyhedral cone formed by taking the convex hull in $\R^{2r}$ of the points $(\lambda_1,\dots,\lambda_r,\mu_1,\dots,\mu_r) \in \Z_{\geq 0}^{2r}$ where $\lambda = (\lambda_1,\dots,\lambda_r)$ and $\mu = (\mu_1,\dots,\mu_r)$ are both elements of $\Par_r(n)$ for some $n$ and where  $\lambda$ dominates $\mu$.  
\end{defn}

Note that the Kostka cone is pointed, i.e., contains no nontrivial linear subspace. The Kostka cones can be viewed as nested via the following observation.

\begin{obs}\label{obs: kostka slice}
The cone $\Kostka_r$ is combinatorially equivalent to the codimension-$2$ face of $\Kostka_{r+1}$ obtained by intersecting with the hyperplane given by the equation $\mu_r = 0$.
\end{obs}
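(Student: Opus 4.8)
The plan is to exhibit an explicit linear isomorphism between $\Kostka_r$ and the face in question, since a linear isomorphism of pointed cones induces an isomorphism of their face lattices and hence a combinatorial equivalence. Throughout I would work with the inequality description of the Kostka cone recorded in the introduction: a point $(\lambda,\mu) \in \R^{2(r+1)}$ lies in $\Kostka_{r+1}$ exactly when $\lambda$ and $\mu$ are non-increasing and nonnegative, $\sum_{i=1}^{r+1}\lambda_i = \sum_{i=1}^{r+1}\mu_i$, and $\sum_{i=1}^{k}\lambda_i \geq \sum_{i=1}^{k}\mu_i$ for every $k \leq r$.

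First I would identify the face. The inequality $\mu_{r+1} \geq 0$ is valid on all of $\Kostka_{r+1}$, so the hyperplane $\mu_{r+1} = 0$ (setting the final part of $\mu$ to zero) does cut out a face $F$. The key step, and the reason this is a codimension-$2$ rather than a codimension-$1$ face, is that $\mu_{r+1} = 0$ forces $\lambda_{r+1} = 0$ as well. Indeed, when $\mu_{r+1}=0$ the equal-sum condition gives $\sum_{i=1}^{r}\mu_i = \sum_{i=1}^{r+1}\lambda_i$, while dominance at $k = r$ gives $\sum_{i=1}^{r}\lambda_i \geq \sum_{i=1}^{r}\mu_i$; combining these yields $\lambda_{r+1} \leq 0$, and since $\lambda_{r+1} \geq 0$ we conclude $\lambda_{r+1} = 0$. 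Thus $F$ lies in the codimension-$2$ coordinate subspace $\{\lambda_{r+1} = \mu_{r+1} = 0\}$.

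Next I would check that deleting these two identically-zero coordinates is the desired isomorphism. Let $\pi(\lambda_1,\dots,\lambda_{r+1},\mu_1,\dots,\mu_{r+1}) = (\lambda_1,\dots,\lambda_r,\mu_1,\dots,\mu_r)$. On the one hand, every defining inequality of $\Kostka_r$ is the image under $\pi$ of a defining inequality of $\Kostka_{r+1}$: the non-increasing and nonnegativity constraints on the first $r$ parts, the equal-sum condition (unchanged because the deleted parts vanish), and dominance for $k \leq r-1$. Hence $\pi(F) \subseteq \Kostka_r$. Conversely, appending a zero to each of $\lambda$ and $\mu$ sends a point of $\Kostka_r$ back into $F$: non-increasingness and nonnegativity survive because the appended parts equal $0$, the equal-sum condition is unchanged, and the single new dominance inequality (at $k = r$) holds with equality by the equal-sum condition. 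This append map is a two-sided inverse to $\pi|_F$, so $\pi$ restricts to a linear bijection $F \to \Kostka_r$.

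The only step needing real care is the forcing argument in the second paragraph; everything else is a routine verification that the facet-defining inequalities match up under deletion and insertion of the zero coordinates. Since $\pi$ is the restriction of an invertible linear coordinate projection, it carries faces of $F$ to faces of $\Kostka_r$ and back, giving the claimed combinatorial equivalence, and the count $\dim F = 2r - 1 = \dim \Kostka_{r+1} - 2$ confirms that $F$ is a codimension-$2$ face.
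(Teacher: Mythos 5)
Your proof is correct and supplies exactly the verification the paper leaves implicit (the statement appears only as an unproved Observation): identifying the face cut out by a valid inequality, showing that $\mu_{r+1}=0$ together with the equal-sum and dominance conditions forces $\lambda_{r+1}=0$ so that the face has codimension $2$, and checking that deleting/appending the two zero coordinates is a linear bijection of pointed cones, hence a combinatorial equivalence. The only point worth flagging is that you have (correctly) read the paper's hyperplane ``$\mu_r=0$'' as the vanishing of the \emph{last} part of $\mu$ in $\Kostka_{r+1}$, i.e.\ $\mu_{r+1}=0$; taking the literal coordinate $\mu_r$ to zero would instead force $\mu_r=\mu_{r+1}=\lambda_r=\lambda_{r+1}=0$ and produce a copy of $\Kostka_{r-1}$, so your reading is the one that makes the statement true.
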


\subsubsection{Facets}
The bounding hyperplanes of $\Kostka_r$ are simple to describe by examining the required inequalities satisfied by individual entries of each element.

\begin{obs}\label{obs: bounding hyperplanes} The Kostka cone is bounded by the following hyperplanes for $1 \leq i < r$:
\begin{align*}
    H_i &= \{(\lambda,\mu) \in \R^{2r}: \lambda_i = \lambda_{i+1}\}\,, \\
    H_r &= \{(\lambda,\mu) \in \R^{2r}:\lambda_r = 0\}\,,\\
    \widehat H_i &= \{(\lambda,\mu) \in \R^{2r}: \mu_i = \mu_{i+1}\}\,, \text{ and}\\
    J_i &= \left\{(\lambda,\mu) \in \R^{2r}:\sum_{j=1}^i \lambda_j = \sum_{k=1}^i \mu_k\right\}\,.
\end{align*}
\end{obs}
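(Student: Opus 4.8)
The plan is to first identify $\Kostka_r$ with the polyhedral cone cut out, inside the hyperplane $\{\sum_i \lambda_i = \sum_i \mu_i\}$, by the half-spaces attached to the listed hyperplanes, and then to check that each such hyperplane is genuinely facet-defining while the conspicuously absent hyperplane $\{\mu_r = 0\}$ is redundant.

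For the containment $\Kostka_r \subseteq P$, let $P$ denote the cone defined by the inequalities $\lambda_i \geq \lambda_{i+1}$ and $\mu_i \geq \mu_{i+1}$ for $1 \leq i < r$, by $\lambda_r \geq 0$, by $\sum_{j=1}^i \lambda_j \geq \sum_{k=1}^i \mu_k$ for $1 \leq i < r$, together with the equality $\sum_{j=1}^r \lambda_j = \sum_{k=1}^r \mu_k$. Every point generating $\Kostka_r$ is a pair of partitions of a common integer $n$ with $\lambda \geq_{\sf Dom}\mu$, so it satisfies each of these homogeneous relations. Since nonnegative linear combinations preserve homogeneous linear inequalities and the single equality, the whole cone $\Kostka_r$ lies in $P$.

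For the reverse inclusion I would use that $P$ is a rational polyhedral cone, hence generated by its rational rays. Given a rational point of $P$, clearing denominators produces an integer point $(\lambda,\mu) \in P$. The chains $\lambda_1 \geq \cdots \geq \lambda_r \geq 0$ and $\mu_1 \geq \cdots \geq \mu_{r-1}$ make $\lambda$ and $\mu$ weakly decreasing with nonnegative integer entries, the equality forces them to be partitions of a common $n$, and the partial-sum inequalities are precisely $\lambda \geq_{\sf Dom}\mu$. The only partition condition not explicitly imposed in $P$ is $\mu_r \geq 0$, but it follows: combining $J_{r-1}$ with the equality gives $\mu_r = \sum_{k=1}^r\mu_k - \sum_{k=1}^{r-1}\mu_k \geq \sum_{j=1}^r\lambda_j - \sum_{j=1}^{r-1}\lambda_j = \lambda_r \geq 0$. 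Hence each integer point of $P$ is a generator of $\Kostka_r$, so $P \subseteq \Kostka_r$ and equality holds; in particular the listed hyperplanes bound the cone.

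It remains to confirm that these are exactly the bounding hyperplanes. The equality $\sum_j \lambda_j = \sum_k \mu_k$ cuts out the $(2r-1)$-dimensional linear span of $\Kostka_r$ rather than a facet, which is why $J_r$ is absent, and the displayed estimate $\mu_r \geq \lambda_r \geq 0$ shows that $\{\mu_r \geq 0\}$ is already implied by $H_r$ and $J_{r-1}$, so $\{\mu_r = 0\}$ is redundant and correctly omitted. To see that each remaining inequality is irredundant, I would exhibit, for each one, a point of $\Kostka_r$ lying on the corresponding hyperplane but strictly satisfying all the others, certifying a face of codimension one. I expect the construction of these relative-interior witnesses — one tailored partition pair per candidate facet, together with a check that no accidental coincidence collapses a facet for small $r$ — to be the only delicate part; the identification $\Kostka_r = P$ above is otherwise routine.
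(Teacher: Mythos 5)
Your argument is correct and is exactly the justification the paper leaves implicit (the observation is stated without proof, with the facet and distinctness claims deferred to \autoref{rem: num facets}): both inclusions between $\Kostka_r$ and the cone cut out by these inequalities inside $\{\sum_{j}\lambda_j=\sum_{k}\mu_k\}$ go through as you describe, and your derivation $\mu_r\geq\lambda_r\geq 0$ from $J_{r-1}$, $H_r$, and the total-sum equality correctly explains why $\{\mu_r=0\}$ may be omitted. The irredundancy witnesses you defer are the content of the subsequent remark rather than of this statement, so nothing essential is missing here.
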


\begin{rem}\label{rem: num facets}
It is straightforward to check that each of these hyperplanes intersects $\Kostka_r$ along a facet, and these facets are distinct when $r > 2$.  Thus, $\Kostka_r$ has $3r-2$ facets for $r > 2$.
\end{rem}

\subsubsection{The Kostka polytope}
Since a large portion of this work concerns the face structure of $\Kostka_r$, it is often more convenient to work with a polytopal slice of this cone.  

\begin{defn}
    Let $\Kpoly_r$ be the $(2r-2)$-dimensional polytope obtained by intersecting $\Kostka_r$ with the affine hyperplane $\left\{\sum_{i=1}^r (\lambda_i +\mu_i) = 1\right\}$. 
\end{defn}

In other words, $\Kpoly_r$ is the set of points $(\lambda,\mu)$ in $\Kostka_r$ such that $\lambda$ and $\mu$ each have entries summing to $\frac{1}{2}$.  Since we are interested only in the combinatorial type of $\Kpoly_r$, we could have equivalently intersected $\Kostka_r$ with any affine hyperplane nontrivially intersecting all faces of $\Kostka_r$ except the origin.  

\begin{obs}\label{obs: face correspondence}
The $d$-faces of $\Kpoly_r$ are in bijection with the $(d+1)$-faces of $\Kostka_r$.  In particular, each $(d+1)$-face of $\Kostka_r$ is obtained by taking all points along any ray emanating from the origin and passing through some fixed $d$-face of $\Kpoly_r$. Thus, the vertices of $\Kpoly_r$ correspond to the extremal rays of $\Kostka_r$.
\end{obs}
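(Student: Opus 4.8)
The plan is to realize $\Kostka_r$ as the cone over the polytope $\Kpoly_r$ and then invoke the standard face correspondence between a pointed cone and a transversal affine cross-section. Write $\ell(\lambda,\mu) = \sum_{i=1}^r(\lambda_i + \mu_i)$ for the linear functional defining the slicing hyperplane $A = \{\ell = 1\}$, so that $\Kpoly_r = \Kostka_r \cap A$. The first and most important step is to verify that $\ell$ is strictly positive on $\Kostka_r \setminus \{0\}$. Since $\Kostka_r$ is generated by points $(\lambda,\mu)$ with $\lambda, \mu \in \Par_r(n)$, every generator has nonnegative coordinates and satisfies $\ell(\lambda,\mu) = 2n \geq 0$, with equality exactly when $n = 0$, i.e. at the origin. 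A general point $x \in \Kostka_r$ is a nonnegative combination $x = \sum_j c_j g_j$ of generators $g_j$, so $\ell(x) = \sum_j c_j \ell(g_j) \geq 0$, and $\ell(x) = 0$ forces $c_j = 0$ for every generator with $\ell(g_j) > 0$; hence $x = 0$.

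Strict positivity has two consequences I would exploit. First, every nonzero $x \in \Kostka_r$ lies on a unique ray through a unique point $x/\ell(x) \in \Kpoly_r$, so that $\Kostka_r = \cone(\Kpoly_r)$ and the radial map $x \mapsto x/\ell(x)$ restricts to a bijection between rays of the cone and points of $\Kpoly_r$. Second, because $\Kostka_r$ is pointed, every proper nonempty face is cut out by a supporting hyperplane through the origin: if $F = \Kostka_r \cap \{\langle a, \cdot\rangle = b\}$ with $\langle a, \cdot\rangle \leq b$ on $\Kostka_r$ and $F$ contains a nonzero point $x$, then $tx \in \Kostka_r$ for all $t \geq 0$ forces $tb = \langle a, tx\rangle \leq b$ for all $t \geq 0$, whence $b = 0$. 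In particular each face $F$ is itself a subcone of $\Kostka_r$.

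I would then define two maps: send a face $F \neq \{0\}$ of $\Kostka_r$ to $F \cap A$, and send a nonempty face $G$ of $\Kpoly_r$ to $\cone(G)$. The functional $a$ supporting $F$ also supports $\Kpoly_r$, so $F \cap A$ is a face of $\Kpoly_r$; conversely $\cone(G)$ is the intersection of $\Kostka_r$ with the supporting hyperplane of $G$, hence a face of $\Kostka_r$. These are mutually inverse, since $F$ is a cone and so equals $\cone(F \cap A)$, while $G \subseteq A$ gives $\cone(G) \cap A = G$. Finally, because $\ell$ is nonzero on the linear span of $F$ (indeed $\ell$ takes all nonnegative values on $F \setminus \{0\}$), the hyperplane $A$ meets the relative interior of $F$ transversally, so $\dim(F \cap A) = \dim F - 1$.

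Combining these, the faces $F$ of $\Kostka_r$ of dimension $d+1$ correspond bijectively to the faces $F \cap A$ of $\Kpoly_r$ of dimension $d$; specializing to $d = 0$ recovers the stated correspondence between the extremal rays (the $1$-faces) of $\Kostka_r$ and the vertices of $\Kpoly_r$. The one point that deserves genuine care, and which I expect to be the main obstacle, is the strict positivity of $\ell$ together with the transversality it guarantees: this is exactly what ensures the dimension drops by precisely one and that no face of the cone other than the apex is collapsed or overlooked. Everything else is the routine cone-over-a-polytope dictionary.
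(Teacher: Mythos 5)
Your argument is correct and is exactly the standard cone--cross-section dictionary that the paper implicitly invokes: the paper states this as an observation with no proof at all, so you have simply supplied the routine justification it takes for granted. The key points you isolate (strict positivity of the slicing functional $\ell$ on $\Kostka_r\setminus\{0\}$, every proper face of the pointed cone being a subcone, and the resulting dimension drop by one) are precisely the right ones, and the details check out.
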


\subsubsection{Extremal Rays}
The extremal rays of $\Kostka_r$ were described in \cite{gao2021kostka}.  In particular, we have

\begin{prop}{\cite[Proposition 4.1, Corollary 1.7]{gao2021kostka}}\label{prop: extremal rays}
Let $a,b,\ell$ satisfy $0 \leq \ell < b \leq a\leq r$.  Then
\begin{align*}(\lambda,\mu) &= \left( \underbrace{a-\ell,\dots,a-\ell}_{b}, 0 \dots, 0; \underbrace{a - \ell,\dots,a - \ell}_{\ell}, \underbrace{b - \ell,\dots,b - \ell}_{a - \ell}, 0,\dots,0\right)\\
&=((a-\ell)^b, 0^{r-b});\ 
((a-\ell)^\ell, (b-\ell)^{a-\ell}, 0^{r-a}))\,,
\end{align*}
generates an extremal ray of $\Kostka_r$, and all extremal rays are generated by such an element. In particular, the number of extremal rays of $\Kostka_r$ is 
$\binom{r}{3}+\binom{r}{2}+\binom{r}{1}$.
\end{prop}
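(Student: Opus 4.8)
The plan is to treat $\Kostka_r$ as a full-dimensional cone inside the ambient hyperplane $J_r = \{\sum_i \lambda_i = \sum_i \mu_i\}$, which is $(2r-1)$-dimensional, and to use that an extremal ray is precisely a one-dimensional face, i.e.\ the common solution set of a subcollection of the bounding hyperplanes from \autoref{obs: bounding hyperplanes} whose normals have corank $1$. I would organize the argument into three parts: (i) each displayed $(\lambda,\mu)$ spans an extremal ray; (ii) conversely every extremal ray arises this way; (iii) the enumeration.

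For (i), first check the displayed point lies in $\Kostka_r$: monotonicity and nonnegativity are clear, the two prefix sums agree at $k=r$, and the $k$-th dominance inequality reduces to $(k-\ell)(a-b)\ge 0$ on $\ell<k\le b$ and to a nonnegative quantity on $b<k\le a$, so $\lambda \ge_{\sf Dom}\mu$. Then identify the tight facets: the equalities $H_i$ and $H_r$ force any solution to have the form $\lambda=(t^b,0^{r-b})$ for a single scalar $t$; the equalities $\widehat H_i$ force $\mu=(s^\ell,u^{a-\ell},w^{r-a})$; and the tight $J_i$ then pin these down, since $J_i$ for $i\le \ell$ give $s=t$, $J_a$ gives $(b-\ell)t=(a-\ell)u$, and $J_i$ for $i>a$ (or $J_r$ when $a=r$) give $w=0$. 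Hence the solution set is the single ray $\R_{\ge0}\cdot(\lambda,\mu)$, so it is extremal. The degenerate cases $\ell=0$, $a=b$, $a=r$, $b=r$ are handled by the same computation with the corresponding block absent.

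Part (ii) is the crux. Given a generator $(\lambda,\mu)$ of an extremal ray, after imposing its tight $H/\widehat H$ facets the vector $\lambda$ is constant on runs with its zero tail killed by $H_r$, contributing $p:=\#\{\text{distinct positive values of }\lambda\}$ free scalars, while $\mu$ is constant on runs with no analogue of $H_r$, contributing $q:=\#\{\text{distinct values of }\mu\}$ free scalars; the tight dominance facets together with the ambient $J_r$ cut this $(p+q)$-dimensional space down by their rank $\rho$. Extremality is exactly the condition $p+q-\rho=1$. The main obstacle is to show that this corank-$1$ condition forces $p=1$ (so $\lambda$ is a single rectangle) and $q\le 3$, with the two positive $\mu$-values in the stated ratio. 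I expect to argue that a proper tight $J_i$ can contribute an independent equation only when $i$ is a common run-boundary of $\lambda$ and $\mu$, which bounds $\rho$ by the number of such interfaces and shows that any $\lambda$ with two positive values, or any $\mu$ with three, would leave corank at least $2$; equivalently, at a non-tight descent one can split off a genuine cone element and exhibit a decomposition, contradicting extremality. Reading off the run lengths and the forced value ratio then recovers $a,b,\ell$ with $0\le \ell<b\le a\le r$.

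Finally, for (iii) I would count rays rather than triples, noting that the parametrization is injective precisely when $a>b$: when $a=b$ the two $\mu$-values coincide and the generator collapses to $(1^a,0^{r-a};\,1^a,0^{r-a})$, independent of $\ell$. The triples with $a>b$ number $\sum_{b=1}^{r-1} b(r-b)=\binom{r+1}{3}$, and the diagonal rays ($a=b$) number $r=\binom{r}{1}$; since $\binom{r+1}{3}=\binom{r}{3}+\binom{r}{2}$, the total is $\binom{r}{3}+\binom{r}{2}+\binom{r}{1}$, as claimed.
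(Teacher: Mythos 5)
First, a point of reference: the paper does not prove this proposition at all --- it is imported verbatim from Gao--Kiers--Orelowitz--Yong \cite[Proposition 4.1, Corollary 1.7]{gao2021kostka} --- so there is no internal argument to compare yours against, and your attempt must stand on its own. Parts (i) and (iii) of your proposal do: the membership check (the dominance slack $(k-\ell)(a-b)$ on $\ell < k \leq b$ and $(b-\ell)(a-k)$ on $b < k \leq a$ is correct), the identification of the tight facets via \autoref{prop: hyp rules}, the computation that their common solution set is the single line $\R\cdot(\lambda,\mu)$, and the count $\sum_{b=1}^{r-1} b(r-b) + r = \binom{r+1}{3} + \binom{r}{1} = \binom{r}{3}+\binom{r}{2}+\binom{r}{1}$ together with the injectivity of the labeling for $a>b$ are all sound.

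The genuine gap is part (ii), which you yourself identify as the crux and then leave as a plan (``I expect to argue\dots''). The completeness of the classification \emph{is} the proposition, and the one concrete lemma you propose to carry it --- that a proper tight $J_i$ contributes an independent equation only when $i$ is a common run-boundary of $\lambda$ and $\mu$ --- fails under its natural reading. Take $\lambda = \mu = (2,1,1)$ in $\Kostka_3$: the tight facets are $H_2$, $\widehat H_2$, $J_1$, $J_2$, the run values give $p=q=2$, and $J_1, J_2$ impose the two \emph{independent} relations $t_1=s_1$ and $t_2=s_2$ even though position $2$ is interior to a run of both partitions; a bound of the form $\rho \leq \#\{\text{common interfaces}\}$ would give $\rho \leq 1$ here, whereas $\rho = 2$. (The example is a $2$-face, spanned by the rays labeled $(1,1,1)$ and $(3,3,3)$, so it is consistent with the proposition --- but it shows your bookkeeping device is wrong.) The correct mechanism is that a maximal block of consecutive tight $J_i$'s forces pointwise equalities $\lambda_j=\mu_j$ throughout, and one must then count how many independent relations these pointwise equalities and the boundary relations impose against $p+q$; alternatively, one exhibits an explicit decomposition $(\lambda,\mu)=(\lambda',\mu')+(\lambda'',\mu'')$ into cone elements whenever $\lambda$ is not a rectangle or $\mu$ has more than two positive part sizes or the value ratio is wrong. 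Either route requires a real argument that is absent, so as written the converse direction --- and hence the proposition --- is not proved.
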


\begin{exmp}
Let $r = a = 5$, $b = 4$, and $\ell = 2$. Then

\[(\lambda,\mu)= \left((3,3,3,3,0), (3,3,2,2,2)\right) = \left(\,\tableau{ \ &\ &\ \\\ &\ &\ \\ \ &\ &\ \\ \ &\ &\ }\ ,\ \tableau{\ &\ &\ \\\ &\ &\ \\ \ 
 &\ \\ \ &\ \\ \ &\ }\,\right)\]
generates an extremal ray of $\Kostka_5$.
\end{exmp}

\begin{defn}\label{defn: extremal ray label}
We say that the extremal ray in \autoref{prop: extremal rays} is \emph{labeled} by the triple $(a,b,\ell)$ whenever $a \neq b$.  Whenever $a = b$, the extremal ray in \autoref{prop: extremal rays} is not dependent on the choice of $\ell$, and we say it is \emph{labeled} by the triple $(a,a,a)$.  
We also say that the corresponding vertex of $\Kpoly_r$ (using \autoref{obs: face correspondence}) is \emph{labeled} by the same triple.
\end{defn}

\begin{exmp}
The seven extremal rays of $\Kostka_3$ are labeled by the triples $(1,1,1)$, $(2,1,0)$, $(2,2,0)$, $(3,1,0)$, $(3,2,0)$, $(3,2,1)$, and $(3,3,3)$.
\end{exmp}

\begin{rem}
Note that our usage of the parameters $a,b,\ell$ differs from the convention in \cite{gao2021kostka}; in particular, we relabel their parameter $a + \ell$ by $a$ and $b + \ell$ by $b$.  While the choice of label $(a,a,a)$ may seem arbitrary for the case when $a = b$, this choice simplifies the statement of \autoref{lem: parameter bound}.
\end{rem}

\subsection{Hilbert Bases}

Let $C \subseteq \R^d$ be a rational convex polyhedral cone.  By Gordan's Lemma \cite[Theorem 16.4]{schrijver1998theory}, 
there exists a finite set $\calH(C) \subseteq C \cap \Z^d$, such that
\begin{itemize}
\item every integral point of $C$ can be expressed as a nonnegative integer combination of points in $\calH(C)$, and
\item $\calH(C)$ has minimal cardinality with respect to the first property.  
\end{itemize}

In the case that $C$ is pointed, the set $\calH(C)$ is unique and is known as the \emph{Hilbert basis} of $C$.  Moreover, an element of $C \cap \Z^d$ is in the Hilbert basis if and only if it is \emph{irreducible}, i.e., cannot be expressed as a nonnegative integer combination of any other integral points of $C$; otherwise it is called \emph{reducible}.  See \cite[Section 16.4]{schrijver1998theory} for further background.

\begin{rem}\label{rem: reducible condition}
Since $\Kostka_r$ is pointed and has integral points corresponding to pairs in $\Par_r(n)$, we can express Hilbert basis membership in terms of the partitions.  Namely, an element $(\lambda,\mu) \in \Kostka_r \cap \Z^{2r}$ is a Hilbert basis element if and only if no nontrivial subset of the columns of $\lambda$ has total size equal to a subset of the columns of $\mu$.
\end{rem}

\section{The Maximum Number of Vertices of a Face}\label{sec: max face}
In this section, we look at the maximum number of vertices contained in a $d$-face of the polytope $\Kpoly_r$.  Equivalently (see \autoref{obs: face correspondence}), we look at the maximum number of extremal rays contained in a $(d+1)$-face of the cone $\Kostka_r$. We give a uniform upper bound on this quantity for fixed $d$, and furthermore show that this upper bound is exact for $r > d + 1$.

\begin{defn}
    For integers $r \geq 1$ and $0 \leq d \leq 2r-2$, let $m(r,d)$ denote the maximum number of vertices in a $d$-dimensional face of the polytope $\Kpoly_r$.  Let $m(d)$ denote the maximum number of vertices of a $d$-face in any polytope $\Kpoly_j$ over all choices of $j \geq 1$.
\end{defn}

By \autoref{obs: kostka slice}, we have that  $m(r,d)$ is non-decreasing as a function in $r$.  Moreover, since any proper face can be extended to a face of higher dimension, the function $m(r,d)$ is strictly increasing in $d$.  \autoref{table: m(r,d)} depicts some values of $m(r,d)$.

\begin{table}[ht]
\begin{tabular}{l|lllllllllllllll}
  & 2 & 3 & 4 & 5  & 6  & 7  & 8  & 9  & 10 & 11 & 12 & 13 & 14  & 15  & 16   \\ \hline
2 & 3 &   &    &    &    &    &    &    &    &    &    &     &     &     &     \\
3 & 4 & 6 & 7  &    &    &    &    &    &    &    &    &     &     &     &     \\
4 & 4 & 7 & 10 & 13 & 14 &    &    &    &    &    &    &     &     &     &     \\
5 & 4 & 8 & 11 & 15 & 19 & 24 & 25 &    &    &    &    &     &     &     &     \\
6 & 4 & 8 & 12 & 17 & 23 & 28 & 34 & 40 & 41 &    &    &     &     &     &     \\
7 & 4 & 8 & 12 & 18 & 25 & 32 & 40 & 48 & 55 & 62 & 63 &     &     &     &     \\
8 & 4 & 8 & 12 & 18 & 27 & 34 & 45 & 53 & 64 & 75 & 83 & 91  & 92  &     &     \\
9 & 4 & 8 & 12 & 18 & 27 & 36 & 46 & 58 & 69 & 82 & 95 & 110 & 119 & 128 & 129
\end{tabular}
\caption{Some values of $m(r,d)$, the maximum number of vertices in a $d$-face of $\Kpoly_r$, are shown, appearing in the row labeled by $r$ and the column labeled by $d$.}
\label{table: m(r,d)}
\end{table}

\begin{rem}
    Note that $m(d)$ is a priori not guaranteed to exist, but \autoref{cor: max face size} shows that it is well-defined.
\end{rem}

Our main result is an exact calculation of $m(d)$, which in turn gives an upper bound on $m(r,d)$.  Using the language of $m(d)$ and $m(r,d)$, we restate the result stated in \autoref{thm: max face size}.

\begin{cor}\label{cor: max face size}
    For $r > d + 1$, we have 
    $$m(r,d) = m(d) = \prod_{i=1}^3\left\lfloor \frac{d+2 + i}{3} \right\rfloor \,.$$
\end{cor}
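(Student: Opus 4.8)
The plan is to prove the uniform upper bound $m(r,d) \le \prod_{i=1}^3\lfloor (d+2+i)/3\rfloor$ for \emph{every} $r$, together with a matching lower bound $m(r,d) \ge \prod_{i=1}^3\lfloor(d+2+i)/3\rfloor$ once $r \ge d+2$; since $m(d)=\sup_r m(r,d)$, these force $m(r,d)=m(d)=\prod_{i=1}^3\lfloor(d+2+i)/3\rfloor$ in the stated range. I work throughout with the triple labels $(a,b,\ell)$ of \autoref{prop: extremal rays}. The first task is a local computation: for the ray labeled $(a,b,\ell)$ with $a>b$, the partition $\lambda$ has a single descent, at position $b$; the partition $\mu$ has descents exactly at positions $\ell$ (when $\ell\ge 1$) and $a$; and, writing $p_i=\sum_{j\le i}(\lambda_j-\mu_j)$ for the dominance slack, $p$ is a single-peaked ``tent'' supported on $\ell<i<a$ with peak at $b$, so the facet $J_i$ of \autoref{obs: bounding hyperplanes} is tight precisely when $i\le\ell$ or $i\ge a$. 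Consequently a vertex lies on a face $F$ if and only if its label meets the equalities cutting out $F$: its $\lambda$-descent $b$ must avoid the active $H_i$, its $\mu$-descents $\ell,a$ must avoid the active $\widehat H_i$, and every active $J_i$ must satisfy $i\le\ell$ or $i\ge a$.

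Writing $B,L,A$ for the sets of values of $b,\ell,a$ occurring among the vertices of $F$, this description lets me read off the dimension. Parametrizing $\lambda$ and $\mu$ by their nonnegative descent increments, $F$ has $|B|$ free $\lambda$-increments and $|L^+\cup A|$ free $\mu$-increments, where $L^+=L\setminus\{0\}$, subject to the equal-sum relation and the active dominance equalities. After using \autoref{obs: kostka slice} to pass to the minimal ambient $r$, a face whose slacks have connected support picks up only the single equal-sum relation, giving $\dim F = |B|+|L^+\cup A|-2$; and $|V(F)|$ is at most the number of admissible triples $N(B,L,A):=\#\{(\ell,b,a)\in L\times B\times A : \ell<b\le a\}$, the inequality being strict exactly when diagonal rays $(a,a,a)$ are present.

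For the lower bound I exhibit the extremal face directly. Given $x+y+z=d+3$ with $x,y,z\ge 1$, place $L=\{0,\dots,x-1\}$, $B=\{x,\dots,x+y-1\}$, and $A=\{x+y,\dots,x+y+z-1\}$; this needs $x+y+z-1=d+2\le r$, i.e.\ $r\ge d+2$. Every triple in $L\times B\times A$ then satisfies $\ell<b<a$, so the associated face is the intersection of the complementary facets, carries all $xyz$ vertices, has $L^+$ and $A$ disjoint with connected support, and hence has $\dim F=|B|+(|L|-1)+|A|-2=x+y+z-3=d$. Combinatorially this face is the product of simplices $\Delta_{x-1}\times\Delta_{y-1}\times\Delta_{z-1}$, and taking $x,y,z$ as equal as possible realizes the maximal product $\prod_{i=1}^3\lfloor(d+2+i)/3\rfloor$.

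The upper bound is the crux. First I reduce to the connected-support case: if the common zero set of the slacks splits the support into $k$ intervals, the vertices partition by which interval carries their tent, the resulting subfaces are dominance-independent, and $F$ behaves as their join, so $|V(F)|=\sum_j|V(F_j)|$ while $\dim F\ge\sum_j\dim F_j+(k-1)$; since $m$ is increasing and superadditive in the sense $m(e_1)+\cdots+m(e_k)\le m\big(\sum_j e_j+(k-1)\big)$, it suffices to bound single-interval faces. There it remains to show that, subject to $|B|+|L^+\cup A|=d+2$, the count $N(B,L,A)$ is maximized by the separated configuration $L<B<A$, in which case $N=|B|\cdot|L|\cdot|A|$ with $|B|+|L|+|A|=d+3$, whereupon the elementary fact that three positive integers of fixed sum have largest product when balanced finishes the proof. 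The main obstacle is exactly this optimization: an ``overlap'' position serving as both a lower mark $\ell$ and an upper mark $a$ lets $|L|+|B|+|A|$ exceed $d+3$, so the crude bound $N\le|L|\,|B|\,|A|$ no longer suffices. The content is that such a shared position can never simultaneously feed both factors $\#\{\ell<b\}$ and $\#\{a\ge b\}$ at a single $b$, so the admissible triples it creates are too sparse to beat the balanced separated product; I expect to establish this by an exchange argument that removes overlaps and interleavings one at a time without decreasing $N(B,L,A)$ relative to $m(\dim F)$.
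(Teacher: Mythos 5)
Your lower bound is sound and is essentially the paper's construction (\autoref{thm: max face lb}): separated intervals $L<B<A$ of sizes $x,y,z$ with $x+y+z=d+3$, realizable once $r\geq d+2$, giving $xyz$ vertices on a face of dimension at most $d$. The problem is the upper bound, which you yourself identify as ``the crux'' and then do not prove: the final step --- showing that the count $N(B,L,A)$ of admissible triples cannot beat the balanced separated product when positions are shared between $L$ and $A$ or the sets interleave --- is left as ``I expect to establish this by an exchange argument.'' That is precisely the hard part of the statement, and an exchange argument that ``removes overlaps and interleavings one at a time without decreasing $N$ relative to $m(\dim F)$'' is delicate, because each such move changes both $N$ and the dimension of the face, so you would need to track a two-variable invariant; nothing in the proposal indicates how. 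The auxiliary reductions (the dimension formula $\dim F=|B|+|L^+\cup A|-2$, the join decomposition over connected components of the slack support, and the superadditivity $\sum_j m(e_j)\leq m(\sum_j e_j+(k-1))$) are also asserted rather than proved, and each would need its own verification.

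For comparison, the paper's proof of \autoref{thm: max face ub} avoids the global optimization over configurations of $L,B,A$ entirely by conditioning on $b$: writing the face as a minimal intersection of facets indexed by $I,J,K$, it bounds, for each fixed $b\notin I$, the number $z_1(b)$ of admissible values of $a$ and $z_2(b)$ of admissible values of $\ell$. Since for a fixed $b$ (with $a\neq b$) the admissible $a$'s lie strictly above $b$ and the admissible $\ell$'s strictly below, these two sets are automatically disjoint inside a universe of size $r+1-|J|-|K|$, so $z_1(b)+z_2(b)\leq r+1-|J|-|K|$ with no case analysis on overlaps. Summing $z_1(b)z_2(b)$ over the at most $r-|I|$ choices of $b$ produces three factors whose sum is forced to be $d+3$ by the codimension count, and the balanced-product inequality finishes. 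If you want to complete your argument, I would recommend replacing your planned exchange argument with this per-$b$ factorization, which makes the disjointness you are struggling to engineer hold for free.
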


\begin{rem}
The values of $m(d)$ appear as the sequence \href{https://oeis.org/A006501}{A006501} in the OEIS \cite{oeis}, with generating function $\displaystyle \frac{1 + x^2}{(1-x)^2(1-x^3)^2}$.  The quantity $m(d)$ can be alternatively characterized as the maximum product of three positive integers summing to $d+3$. 
\end{rem}
From \autoref{obs: bounding hyperplanes} and \autoref{prop: extremal rays} the following is clear.
\begin{prop}\label{prop: hyp rules}
    Let $v$ be a vertex of $\Kpoly_r$ labeled by the triple $(a,b,\ell)$.  Then
    \begin{itemize}
       \item $v \in H_i$ if and only if $b \neq i$,
        \item $v \in \widehat H_k$ if and only if $a \neq k$ and $\ell \neq k$. 
        \item $v \in J_j$ if and only if $j \leq \ell$, $j \geq a$, or $a = b$.
    \end{itemize}
\end{prop}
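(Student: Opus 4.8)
The plan is to prove \autoref{prop: hyp rules} by a direct verification: I would read off the coordinates of the vertex $v$ from \autoref{prop: extremal rays} and then test membership in each of the bounding hyperplanes listed in \autoref{obs: bounding hyperplanes}. Throughout I would treat the generic case $a \neq b$ and the degenerate case $a = b$ (labeled $(a,a,a)$, so that $\ell = a$ by convention) separately. In the generic case the ray generator is $\lambda = ((a-\ell)^b, 0^{r-b})$ and $\mu = ((a-\ell)^\ell, (b-\ell)^{a-\ell}, 0^{r-a})$, where both $a-\ell$ and $b-\ell$ are strictly positive since $\ell < b \leq a$; in the degenerate case $\lambda = \mu = ((a-\ell)^a, 0^{r-a})$ is a positive multiple of $(1^a, 0^{r-a})$.

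For the $H_i$ statement I would note that $\lambda$ equals the positive constant $a-\ell$ on positions $1,\dots,b$ and vanishes on positions $b+1,\dots,r$; hence the only index at which consecutive entries differ is $i = b$, and $\lambda_r = 0$ exactly when $b < r$. Since $b \leq r$, this gives $v \in H_i$ if and only if $b \neq i$ for all $1 \leq i \leq r$, covering the special facet $H_r = \{\lambda_r = 0\}$ as well. For the $\widehat H_k$ statement, the entries of $\mu$ form three constant blocks, of values $a-\ell$, $b-\ell$, and $0$ on positions $1,\dots,\ell$, then $\ell+1,\dots,a$, then $a+1,\dots,r$. The step at position $\ell$ is a genuine drop if and only if $a-\ell \neq b-\ell$, i.e. $a \neq b$, while the step at position $a$ is a genuine drop since $b-\ell > 0$. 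Thus in the generic case $\mu_k \neq \mu_{k+1}$ precisely when $k \in \{\ell, a\}$, giving $v \in \widehat H_k$ if and only if $k \neq \ell$ and $k \neq a$; in the degenerate case $\mu$ has its single drop at $a = \ell$, which the convention $(a,a,a)$ makes consistent with the same statement.

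For the $J_j$ statement I would compare the partial sums $\Lambda_j := \sum_{i=1}^j \lambda_i = (a-\ell)\min(j,b)$ and $M_j := \sum_{i=1}^j \mu_i$, where $M_j = (a-\ell)j$ for $j \leq \ell$, then $M_j = (a-\ell)\ell + (b-\ell)(j-\ell)$ for $\ell \leq j \leq a$, and $M_j = (a-\ell)b$ for $j \geq a$. A short computation gives $\Lambda_j - M_j = (j-\ell)(a-b)$ for $\ell \leq j \leq b$ and $\Lambda_j - M_j = (b-\ell)(a-j)$ for $b \leq j \leq a$, the two expressions agreeing at $j=b$. Both are nonnegative and vanish only at $j = \ell$ and $j = a$ respectively, while $\Lambda_j = M_j$ holds directly for $j \leq \ell$ and $j \geq a$. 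Hence, when $a \neq b$ the identity $\Lambda_j = M_j$ holds exactly when $j \leq \ell$ or $j \geq a$, whereas when $a = b$ one has $\lambda = \mu$ and so $\Lambda_j = M_j$ for every $j$. This is precisely the claimed trichotomy.

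I do not expect a substantive obstacle: the statement is a bookkeeping verification, which is why the paper can call it ``clear.'' The only points requiring genuine care are (i) tracking the degenerate case $a = b$, where the labeling convention $\ell = a$ is exactly what makes the $\widehat H$ and $J$ conditions read uniformly; (ii) the boundary indices $j = \ell, b, a$ and the empty leading block when $\ell = 0$; and (iii) remembering that $H_r$ is the special facet $\{\lambda_r = 0\}$ rather than an equality of consecutive $\lambda$-entries. Confirming that each drop and each partial-sum coincidence is genuine uses only $a-\ell > 0$ and $b-\ell > 0$, both of which follow from $\ell < b \leq a$, and this completes the argument.
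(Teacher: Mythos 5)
Your verification is correct and is exactly the computation the paper leaves implicit: the paper offers no written proof, stating only that the proposition is "clear" from \autoref{obs: bounding hyperplanes} and \autoref{prop: extremal rays}, and your block-by-block check of $\lambda$, $\mu$, and their partial sums (including the $a=b$ convention and the $j=\ell,b,a$ boundary cases) is precisely that routine verification carried out in full.
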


\begin{thm}\label{thm: max face ub}
    A $d$-dimensional face of $\Kpoly_r$ has at most $ \prod_{i=1}^3\left\lfloor \frac{d+2 + i}{3} \right\rfloor $ vertices.
\end{thm}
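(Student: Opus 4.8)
The plan is to translate the geometry into a purely combinatorial statement about the triples labeling the vertices, and then reduce to the arithmetic fact that the maximum product of three positive integers of fixed sum is attained by the balanced partition. Fix a $d$-face $F$ and let $A$ be the set of facets containing it, with $S_H, S_{\widehat H}, S_J$ the indices of the active $H$-, $\widehat H$-, and $J$-facets respectively. By \autoref{prop: hyp rules}, a vertex labeled $(a,b,\ell)$ fails to lie on exactly the facets $H_b$, $\widehat H_a$, $\widehat H_\ell$, and $J_j$ for $\ell < j < a$ (a diagonal vertex $(a,a,a)$ fails only $H_a$ and $\widehat H_a$). Hence the vertices of $F$ are exactly the admissible labels whose off-set is disjoint from $A$: the triple $(a,b,\ell)$ occurs on $F$ if and only if $b \notin S_H$, $\{a,\ell\} \cap S_{\widehat H} = \emptyset$, and $(\ell,a) \cap S_J = \emptyset$.

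The first step is an easy product bound. Let $P, Q, S$ be the sets of values assumed by $\ell$, $b$, and $a$ over the labels of vertices of $F$, and set $p = |P|$, $q = |Q|$, $s = |S|$. Since a vertex is determined by its label and that label lies in $P \times Q \times S$, we obtain $|V(F)| \le pqs$ immediately.

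The second step, the dimension lower bound $\dim F \ge p + q + s - 3$, is the crux. I plan to exhibit $p + q + s - 2$ affinely independent vertices of $F$: a base vertex, together with, for each additional value in $P$ (resp.\ $Q$, $S$), a vertex realizing it. Using the explicit coordinates of \autoref{prop: extremal rays}, the partial-sum (staircase) profile of $\lambda$ has its single interior breakpoint at index $b$, while that of $\mu$ has its interior breakpoints at $\ell$ and $a$; thus $b$-variations move the $\lambda$-profile and $\ell$- and $a$-variations move distinct breakpoints of the $\mu$-profile, so the three families span three independent directions and their affine hull has dimension $p + q + s - 3$. The main obstacle is that the parameters do not vary freely: the condition $(\ell,a) \cap S_J = \emptyset$ couples $\ell$ and $a$ through the active $J$-facets, so a naive ``change one coordinate'' vertex need not remain on $F$. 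I expect to resolve this with the parameter-bound lemma, which confines the admissible labels of $F$ to the gaps between consecutive active $J$-indices and shows that within a gap the admissible labels form a genuine grid $\ell < b < a$; the $(a,a,a)$ convention is precisely what makes this gap/grid description uniform across diagonal and non-diagonal vertices.

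Combining the two steps, every $d$-face satisfies $p, q, s \ge 1$ and $p + q + s \le d + 3$, whence
\[
|V(F)| \le pqs \le \max\{\,xyz : x,y,z \ge 1,\ x + y + z = d+3\,\},
\]
using that the maximal product is monotone in the sum constraint (adding $1$ to the smallest part only increases the product). The one remaining elementary point is the final maximization, namely that this maximum equals $\prod_{i=1}^3 \lfloor (d+2+i)/3 \rfloor$, which follows by a standard smoothing argument: any two parts differing by at least $2$ can be rebalanced to strictly increase the product, so the optimum is the balanced partition asserted in the statement.
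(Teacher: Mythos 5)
There is a genuine gap at the crux of your argument: the dimension lower bound $\dim F \ge p+q+s-3$ (equivalently, $p+q+s \le d+3$) is false. Take the two vertices of $\Kpoly_6$ labeled $(a,b,\ell)=(3,2,1)$ and $(a',b',\ell')=(6,5,4)$. By \autoref{thm: 2d faces} (case 2(iii), since $\ell'=4\ge a=3$) they form an edge, and one checks directly from \autoref{prop: minimal face conditions} that the minimal face containing them has no further vertices. Here $P=\{1,4\}$, $Q=\{2,5\}$, $S=\{3,6\}$, so $p+q+s-3=3$, yet $\dim F=1$. The flaw in your planned affine-independence construction is that a single additional vertex can realize a new value of $\ell$, of $b$, and of $a$ simultaneously, so you cannot in general produce $p+q+s-2$ distinct (let alone affinely independent) vertices ``one new parameter value at a time''; in the example above the second vertex accounts for three new values at once. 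The correct general bound on the number of distinct label entries of a $d$-face is $3d+3$, not $d+3$ (this is \autoref{lem: parameter bound} in the paper), and that weaker bound is not enough to run your product argument, since $pqs$ can be as large as roughly $(d+1)^3$ under the constraint $p+q+s\le 3d+3$.

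The paper's proof avoids this by bounding \emph{available} rather than \emph{realized} parameter values, using the facet description of $F$. Writing $F$ as the intersection of $\Kpoly_r$ with facet hyperplanes indexed by $I$ (type $H$), $J$ (type $J$), and $K$ (type $\widehat H$), chosen minimally so that $|I|+|J|+|K|$ equals the codimension of $F$, one gets: the number of admissible values of $b$ is at most $r-|I|$, and for each fixed $b$ the numbers of admissible values of $a$ and of $\ell$ sum to at most $r+1-|J|-|K|$ (because $a,\ell\notin K$ and no element of $J$ may lie strictly between $\ell$ and $a$). The three resulting factors sum to $2r+1-|I|-|J|-|K|=d+3$, and then the balanced-product maximization you describe finishes the proof. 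So the arithmetic endgame of your proposal is fine, but the three integers being balanced must come from the codimension count of active facets, not from the sets of realized label entries.
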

\begin{proof}
    Let 
    $$F = \Kpoly_r \cap \left(\bigcap_{i \in I} H_i \right) \cap \left( \bigcap_{j \in J} J_j \right) \cap \left(\bigcap_{k \in K} \widehat H_k \right)$$
    be a $d$-dimensional face of $\Kpoly_r$, where $I \subseteq \{1,2,\dots,r\}$ and $J,K \subseteq \{1,2,\dots,r-1\}$ are (possibly empty) index sets.  We can furthermore assume that the set of hyperplanes is chosen minimally to have this intersection, i.e., $|I| + |J| + |K| = 2r - 1 - d$.  

    We are interested in bounding the possible triples $(a,b,\ell) \in \Z_{\geq 0}$ labeling the vertices of $F$.   According to \autoref{prop: hyp rules}, such a triple must satisfy that $b \notin I$, $a,\ell \notin K$, and an element of $J$ is weakly between $a$ and $\ell$ only if $a = b = \ell$.  Let $F_1$ be the set of triples $(a,b,\ell)$ meeting these conditions.  The minimality condition implies that, for any elements $j < j' < j''$ of $J \cup \{0,r\}$, there must be some $(a,b,\ell) \in F_1$ such that $j \leq \ell < j' < a \leq j''$.  That is, the sets $\{j,j+1,\dots, j'-1\} \cut K$ and $\{j-1,j,\dots, j'\} \cut K$ are nonempty for any elements $j < j'$ in $J \cup \{0,r\}$.  

    Fix $b \notin I$.  Let $z_1(b) = |\{a : (a,b,\ell) \in F_1 \text{ for some } a,\ell\}|$ and 
    $z_2(b) = |\{\ell : (a,b,\ell) \in F_1 \text{ for some } a,\ell\}|$.  If $b \in J$, then we must have $a = b = \ell$, so $z_1(b) + z_2(b) \leq 2$.  If $b \notin J$, since each $j \in J$ has an element of $\{0,\dots,r\} \cut K$ on either side of it, we have $z_1(b) + z_2(b) \leq r + 1 - |J| - |K|$.  Thus, summing over our choices for $b$, we have
    \begin{align*}
        |F_1| &\leq \sum_{b \in [r] \cut I} z_1(b) \cdot z_2(b)\\
        &\leq \sum_{b \in [r] \cut I} \left\lfloor \frac{r+1-|J|-|K|}{2} \right\rfloor \cdot \left\lfloor \frac{r+2-|J|-|K|}{2} \right\rfloor\\ &\leq (r-|I|)\cdot \left\lfloor \frac{r+1-|J|-|K|}{2} \right\rfloor \cdot \left\lfloor \frac{r+2-|J|-|K|}{2} \right\rfloor\,,
    \end{align*}
    where, in the second step, we replace the summand by the maximum value of the product of two numbers summing to $r + 1 - |J|-|K|$.  The sum of the three factors in the final expression is 
    $$2r + 1 - |I| - |J| - |K| = d + 3\,,$$
    so their product is at most $\displaystyle \prod_{i=1}^3\left\lfloor \frac{d+2 + i}{3} \right\rfloor \,.$
    This yields the desired upper bound.
\end{proof}

Via a construction, we can prove a lower bound on $m(r,d)$.  
\begin{thm}\label{thm: max face lb}
     Suppose $r > d+1$.  Given any positive integers $z_1,z_2,z_3$ summing to $d + 3$, the intersection 
    $$F = \Kpoly_r \cap \left(\left(\bigcap_{i = 1}^{z_1 - 1} H_i\right) \cap \left(\bigcap_{j = z_1 + z_2}^{r} H_j \right) \cap  \left(\bigcap_{k = z_1}^{r - z_3} \widehat H_k\right)\right)$$
    is a face of $\Kpoly_r$ of dimension at most $d$ with $z_1z_2z_3$ vertices.
\end{thm}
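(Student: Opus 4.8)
The plan is to establish three claims: that $F$ is a genuine face of $\Kpoly_r$, that the set of vertices it contains has the claimed product structure of size $z_1z_2z_3$, and that $\dim F \le d$. The first is immediate: each of $H_i$, $H_j$, and $\widehat H_k$ is a bounding hyperplane of $\Kpoly_r$ (\autoref{obs: bounding hyperplanes}), so $F$ is an intersection of $\Kpoly_r$ with bounding hyperplanes and is therefore a face. Consequently $F$ is the convex hull of exactly those vertices of $\Kpoly_r$ lying on all of these hyperplanes, so it remains to enumerate those vertices and to bound $\dim F$.

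For the enumeration I would translate membership into conditions on a label $(a,b,\ell)$ using \autoref{prop: hyp rules}. Lying on $H_i$ for all $1\le i\le z_1-1$ forces $b\ge z_1$, and lying on $H_j$ for all $z_1+z_2\le j\le r$ forces $b\le z_1+z_2-1$; thus $b$ ranges over the $z_2$ values $z_1,\dots,z_1+z_2-1$. Lying on $\widehat H_k$ for all $z_1\le k\le r-z_3$ forces $a\notin\{z_1,\dots,r-z_3\}$ and $\ell\notin\{z_1,\dots,r-z_3\}$. Invoking $r>d+1$ — equivalently $z_1+z_2-1\le r-z_3$ — one sees that $a\ge b\ge z_1$ forces $a\ge r-z_3+1$, giving the $z_3$ values $r-z_3+1,\dots,r$ for $a$, while $\ell<b\le z_1+z_2-1\le r-z_3$ collapses the condition on $\ell$ to $\ell<z_1$, giving the $z_1$ values $0,\dots,z_1-1$. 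These same inequalities show every resulting triple satisfies $0\le\ell<b<a\le r$, so each is a legitimate (non-diagonal) label, no $(a,a,a)$ vertex survives, and the three ranges may be chosen independently. Hence $F$ contains exactly $z_1z_2z_3$ vertices.

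For the dimension I would count the imposed hyperplanes, namely $(z_1-1)+(r-z_1-z_2+1)+(r-z_1-z_3+1)=2r-(z_1+z_2+z_3)+1=2r-d-2$ of them, and adjoin the two equations $\sum_i\lambda_i=\sum_i\mu_i=\tfrac12$ cutting out the affine hull of $\Kpoly_r$. It suffices to show the resulting $2r-d$ linear functionals are independent, since then $\dim F\le 2r-(2r-d)=d$. The $\lambda$- and $\mu$-functionals decouple. The functionals $\mu_k-\mu_{k+1}$ with $z_1\le k\le r-z_3$ form a contiguous block of consecutive differences and are independent, and as they all have coordinate sum $0$, the functional $\sum_i\mu_i$ lies outside their span. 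The functionals $\lambda_i-\lambda_{i+1}$ with $i\in\{1,\dots,z_1-1\}\cup\{z_1+z_2,\dots,r-1\}$, together with $\lambda_r$, form a subset of the basis $\{\lambda_1-\lambda_2,\dots,\lambda_{r-1}-\lambda_r,\lambda_r\}$ of $(\R^r)^*$ and hence are independent; moreover $\sum_i\lambda_i$ expands in this basis with a nonzero coefficient on the omitted difference $\lambda_{z_1}-\lambda_{z_1+1}$, so it too lies outside their span. Combining the two decoupled sides yields independence of all $2r-d$ functionals.

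I expect the main point requiring care to be the interaction of the three coordinate ranges in the second step: the hypothesis $r>d+1$ is exactly what prevents the $a$-range, $b$-range, and $\ell$-range from colliding, which is what simultaneously guarantees that the product $z_1z_2z_3$ is attained with no repeated labels and that no diagonal $(a,a,a)$ vertex is accidentally included. The independence computation bounding $\dim F$ is the other delicate point, since intersecting $2r-d-2$ hyperplanes need not in general cut the dimension down by the full amount; the basis argument above is what certifies that these particular hyperplanes are in sufficiently general position.
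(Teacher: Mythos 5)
Your proof is correct and follows essentially the same route as the paper: the vertices of $F$ are identified via \autoref{prop: hyp rules} as exactly the labels with $\ell$, $b$, $a$ in three disjoint ranges of sizes $z_1,z_2,z_3$, and the dimension bound comes from counting the linear constraints cutting out $F$. The only difference is that you explicitly verify the linear independence of the $2r-d$ functionals (a step the paper leaves implicit by simply asserting the dimension of the ambient affine subspace), which is a worthwhile but minor elaboration.
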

\begin{proof}
      We begin by determining the set of vertices contained in $F$.  Let $v$ be a vertex of $\Kpoly_r$ labeled by $(a,b,\ell)$.  We have $v \in \left(\bigcap_{i=0}^{z_1-2} H_i \right) \cap  \left(\bigcap_{j = z_1 + z_2}^{r} H_j \right)$ if and only if $z_1 \leq b \leq z_1 + z_2 - 1$.  Similarly, we have $v \in   \bigcap_{k = z_1}^{r - z_3} \widehat H_k$ if and only if $a,\ell \not\in \{z_1,\dots,r-z_3\}$.  By assumption, we have $r - z_3 \geq z_1 + z_2 - 1$ and $\ell \leq b$, hence $v \in F$ if and only if 
      $$0 \leq \ell < z_1 \leq b \leq z_1 + z_2 - 1 \leq r - z_3 < a \leq r\,.$$      
    The ranges for $\ell$, $b$, and $a$ are disjoint and of sizes $z_1$, $z_2$, and $z_3$, respectively.  Therefore, there are $z_1 \cdot z_2 \cdot z_3$ vertices of $\Kpoly_r$ contained in $F$, each associated to a triple $(a,b,\ell)$ satisfying the inequalities above.

    It remains to show that dimension of $F$ is at most $d$.  This follows because any element $(\lambda,\mu)$ in $F$ lies in the affine subspace of $\R^{2r}$ where
    $$\lambda_1 = \lambda_2 = \cdots = \lambda_{z_1},\;\lambda_{z_1+z_2} = \cdots = \lambda_{r},\; \mu_{z_1} = \cdots = \mu_{r-z_3},\text{ and } \sum_{i=1}^r \lambda_i = \sum_{j=1}^r \mu_j = \frac{1}{2}\,,$$
    which has dimension $2r - (z_1-1) - (r-z_1-z_2 + 1) - (r-z_3 - z_1 + 1) - 2 = d$.
\end{proof}

\begin{proof}[Proof of \autoref{cor: max face size}]
    The upper bound follows directly from \autoref{thm: max face ub}.  For the lower bound, consider the face constructed in \autoref{thm: max face lb} with $z_i = \left\lfloor \frac{d + 1 + i}{3} \right\rfloor$.  Since this face achieves the upper bound on the number of vertices in a $d$-face from \autoref{thm: max face ub} and $m(r,d)$ is strictly increasing in $d$, we can conclude that this face has dimension exactly $d$. 
\end{proof}

\section{Characterization of Edges}\label{sec: edge description}
Here we present a procedure for characterizing the faces of a fixed dimension in the Kostka polytope $\Kpoly_r$, where $r$ can vary.  We carry out this characterization explicitly for dimension $1$.  This characterization yields an enumeration of the faces of these dimensions, which is handled in the following section.  It seems very feasible that these methods could be extended to higher dimensions, though the conditions seem to get increasingly complicated.

\begin{prop}\label{prop: minimal face conditions}
    The minimal face of $\Kpoly_r$ containing a set of vertices with labels $\{(a_i,b_i,\ell_i)\}_{1 \leq i \leq m}$ is formed by the set of all vertices whose label $(a,b,\ell)$ satisfies that
    \begin{enumerate}
        \item\label{cond: b} $b$ is an element of $\bigcup_{i=1}^m \{b_i\}$,
        \item\label{cond: a ell} $\ell$ and $a$ are both elements of $\bigcup_{i=1}^m \{\ell_i,a_i\}$,
        \item\label{cond: open interval} the open interval $(\ell,a)$ is contained in  $\bigcup_{i=1}^m (\ell_i,a_i)$, and 
        \item\label{cond: ordering} $0 \leq \ell < b < a \leq r$ or $a = b = \ell$.
    \end{enumerate}
\end{prop}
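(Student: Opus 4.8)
The plan is to identify the minimal face containing a set of vertices by translating the geometric condition into the hyperplane language of \autoref{obs: bounding hyperplanes} and \autoref{prop: hyp rules}. The minimal face $F$ containing vertices $\{v_i\}$ with labels $\{(a_i,b_i,\ell_i)\}$ is precisely the intersection of all bounding hyperplanes (among $H_1,\dots,H_r,\widehat H_1,\dots,\widehat H_{r-1},J_1,\dots,J_{r-1}$) that contain \emph{every} $v_i$. Indeed, a face is an intersection of facets, and the minimal such face is obtained by keeping exactly those facet-hyperplanes that contain the whole vertex set. So first I would compute, using \autoref{prop: hyp rules}, which hyperplanes contain all the given vertices: $H_i$ is retained iff $b_t\neq i$ for all $t$, i.e.\ $i\notin\bigcup_t\{b_t\}$; $\widehat H_k$ is retained iff $a_t\neq k$ and $\ell_t\neq k$ for all $t$, i.e.\ $k\notin\bigcup_t\{\ell_t,a_t\}$; and $J_j$ is retained iff for every $t$ we have $j\leq\ell_t$, $j\geq a_t$, or $a_t=b_t$.

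Next I would characterize which vertices $(a,b,\ell)$ lie in this intersection, again by \autoref{prop: hyp rules}, and check the resulting conditions match \eqref{cond: b}--\eqref{cond: ordering}. A vertex $(a,b,\ell)$ lies in all retained $H_i$ iff $b$ avoids every retained index $i$, i.e.\ $b$ lies among the discarded indices $\bigcup_t\{b_t\}$; this is exactly condition \eqref{cond: b}. Likewise, $(a,b,\ell)$ lies in all retained $\widehat H_k$ iff both $a$ and $\ell$ lie among the discarded indices $\bigcup_t\{\ell_t,a_t\}$, giving condition \eqref{cond: a ell}. Condition \eqref{cond: ordering} is just the constraint from \autoref{prop: extremal rays}/\autoref{defn: extremal ray label} that every valid triple satisfies $0\leq\ell<b<a\leq r$ or $a=b=\ell$, so it carries no new content beyond legality. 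The crux is therefore to show that the $J_j$-conditions translate into the open-interval containment \eqref{cond: open interval}.

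For the $J$ hyperplanes I would argue as follows. Assume the nondegenerate case $a\neq b$ (the degenerate triple $a=b=\ell$ lies in every $J_j$ and is handled by \eqref{cond: ordering} directly). By \autoref{prop: hyp rules}, $(a,b,\ell)\in J_j$ iff $j\leq\ell$ or $j\geq a$, so $(a,b,\ell)\notin J_j$ precisely for $j$ in the open interval $(\ell,a)$. The retained set is $\{j: (a_t,b_t,\ell_t)\in J_j \text{ for all } t\}$, whose complement (the discarded $J_j$'s) is exactly $\bigcup_t (\ell_t,a_t)$, intersected with the index set $\{1,\dots,r-1\}$. Requiring $(a,b,\ell)$ to lie in every retained $J_j$ means the discarded integers $(\ell,a)\cap\Z$ must be contained in the union $\bigcup_t(\ell_t,a_t)$. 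I would then note that containment of the integer points $(\ell,a)\cap\Z$ inside $\bigcup_t(\ell_t,a_t)$ is equivalent to containment of the real open interval $(\ell,a)$ inside $\bigcup_t(\ell_t,a_t)$, since all endpoints are integers and the intervals are open; this gives condition \eqref{cond: open interval}.

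The main obstacle is verifying that the intersection of retained facet-hyperplanes is genuinely the minimal face containing the $v_i$, rather than a larger intersection that fails to cut down to a face, and conversely that every vertex satisfying \eqref{cond: b}--\eqref{cond: ordering} actually lies in the polytope and in this face. I would handle the first direction by the standard fact that in any polytope the minimal face through a vertex set is the intersection of all facets containing that set, which is automatic here since $\Kpoly_r$ is the bounded slice of a pointed cone whose facets are exactly the ones listed in \autoref{rem: num facets}. For the converse, I need that each legal triple $(a,b,\ell)$ satisfying \eqref{cond: b}--\eqref{cond: ordering} does correspond to an actual vertex of $\Kpoly_r$ lying in $F$; this follows because \autoref{prop: extremal rays} guarantees every such triple labels a genuine vertex, and by construction it lies in all the retained hyperplanes, hence in $F$. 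The one subtlety to watch is the interaction between the open-interval condition and the discrete index set $\{1,\dots,r-1\}$ for the $J_j$; I would make sure the equivalence between integer-point containment and real-interval containment is stated carefully so that no boundary index is mishandled.
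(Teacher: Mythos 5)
Your proof is correct and takes essentially the same approach as the paper: the paper's one-sentence proof is precisely the observation that conditions (1)--(4) encode membership in every bounding hyperplane containing the given vertices, which you have spelled out family by family (together with the standard fact that the minimal face through a vertex set is the intersection of the facets containing it). One minor point: since $\widehat H_k$ is only defined for $1\le k\le r-1$, the retained $\widehat H_k$ alone would also permit $a=r$ or $\ell=0$ outside $\bigcup_i\{\ell_i,a_i\}$; those boundary cases are ruled out by the retained $J_{r-1}$ and $J_1$ (equivalently by condition (3)), so the match with the stated conditions is joint rather than condition-by-condition, which your verification of both directions still covers.
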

\begin{proof}
Comparing these conditions to those in \autoref{prop: hyp rules}, we see that these conditions precisely encode that the vertex labeled by $(a,b,\ell)$ is contained in all hyperplanes that contain the vertices with labels $\{(a_i,b_i,\ell_i)\}_{1 \leq i \leq m}$.
\end{proof}

\begin{rem}
For convenience, when considering the labels of a list of vertices, we  follow the convention that the labels are ordered lexicographically.
\end{rem}

We will now show that whether a collection of vertices is the vertex set of some face of the Kostka cone depends only on the cell of the braid arrangement that the vertex label list lies in, i.e., the relative order of the vertex label entries.  We say that two tuples $(x_1,\dots,x_n),(y_1,\dots,y_n) \in \Z^n$ are \emph{order-isomorphic} provided that $x_i > x_j$ if and only if $y_i < y_j$ for any $i,j \in \{1,\dots,n\}$.

\begin{lem}\label{lem: order isom face}
    Suppose we have a pair of order-isomorphic tuples ${(a_1,b_1,\ell_1,\dots,a_m,b_m,\ell_m)}$ and ${(a'_1,b'_1,\ell'_1,\dots,a'_m,b'_m,\ell'_m)}$ in $\{0,\dots,r\}^3$ such that the triples $(a_i,b_i,\ell_i)$ and $(a'_i,b'_i,\ell'_i)$ are labels of vertices of $\Kpoly_r$.  Then the vertices labeled by $\{(a_i,b_i,\ell_i)\}_{1 \leq i \leq m}$ form the vertex set of a $d$-dimensional face of $\Kpoly_r$ if and only if the vertices labeled by $\{(a'_i,b'_i,\ell'_i)\}_{1 \leq i \leq m}$ do.
\end{lem}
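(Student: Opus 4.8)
The plan is to reduce the statement to a purely order-theoretic condition on the label tuples and then transport it along the order-isomorphism. By \autoref{prop: minimal face conditions}, a set of vertices with label set $T = \{(a_i,b_i,\ell_i)\}$ is the vertex set of a face of $\Kpoly_r$ precisely when $T$ is \emph{closed}, i.e.\ when $T$ coincides with the set of all labels $(a,b,\ell)$ generated from $T$ by conditions \ref{cond: b}--\ref{cond: ordering}. My first step is to record that every candidate label produced by those conditions already has all of its entries inside the value set $V_T := \bigcup_i\{a_i,b_i,\ell_i\}$, by conditions \ref{cond: b} and \ref{cond: a ell}, so that testing closedness never requires comparing against values outside $V_T$. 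Consequently conditions \ref{cond: b}, \ref{cond: a ell}, and \ref{cond: ordering} are manifestly invariant under order-isomorphism: they assert only equalities among, and the relative order of, entries of $V_T$, and the apparent bounds $0\le\ell$ and $a\le r$ in \ref{cond: ordering} are automatic for genuine entries of $V_T$.

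The main obstacle is condition \ref{cond: open interval}, the containment $(\ell,a)\subseteq\bigcup_i(\ell_i,a_i)$, which (reading it through \autoref{prop: hyp rules}) amounts to the integer statement that every integer $j$ with $\ell<j<a$ lies strictly inside some non-diagonal interval $(\ell_i,a_i)$. For integers $j$ that are themselves elements of $V_T$ this is order-invariant, but an order-isomorphism need not preserve the existence of ``pure-gap'' integers strictly between two consecutive values of $V_T$, so a priori condition \ref{cond: open interval} could hold for $T$ yet fail for an order-isomorphic $T'$. I would rule this out with the following observation: if a gap $(w,w')$ between consecutive values of $V_T$ contains an uncovered integer, then both endpoints $w$ and $w'$ are themselves uncovered values of $V_T$ (and in fact $w$ occurs only as an $a$-entry and $w'$ only as an $\ell$-entry). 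Any candidate label whose open interval $(\ell,a)$ contains this uncovered pure-gap integer then satisfies $\ell\le w<w'\le a$; unless $\ell=w$ and $a=w'$ it contains the uncovered value $w$ or $w'$ and is already blocked by a value-integer, which is an order-invariant obstruction, while the remaining case $\ell=w,\,a=w'$ forces the $b$-entry to lie strictly between the consecutive values $w$ and $w'$, which is impossible. Hence a candidate is excluded by condition \ref{cond: open interval} if and only if some \emph{value}-integer of $V_T$ in $(\ell,a)$ is uncovered, a condition depending only on the relative order of the entries. This shows closedness of any label set is invariant under order-isomorphism, and this pure-gap analysis is the only genuinely delicate point.

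Finally I would upgrade invariance of closedness to invariance of the dimension $d$ by passing to the face lattice. If $S=\{(a_i,b_i,\ell_i)\}$ is closed with corresponding face $F$, then the faces of $F$ are exactly the faces of $\Kpoly_r$ contained in $F$, and by the previous paragraph these are in inclusion-preserving bijection with the closed subsets $T\subseteq S$: any hyperplane active on $F$ contains $S\supseteq T$, hence is active on the minimal face of $T$, so that minimal face lies in $F$; and singletons are closed, so the vertices appear as the atoms. The order-isomorphism induces a bijection $\sigma$ of labels carrying $S$ to $S'$, and since both closedness and inclusion are preserved by $\sigma$, it restricts to an isomorphism between the poset of closed subsets of $S$ and that of $S'$, i.e.\ an isomorphism of the face lattices of $F$ and $F'$. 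Because the face lattice of a polytope is graded with rank equal to dimension plus one, this poset isomorphism forces $\dim F=\dim F'$, completing the proof. The remaining work beyond the second paragraph is routine bookkeeping.
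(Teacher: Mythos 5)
Your proof is correct, and it follows the same skeleton as the paper's own argument --- both reduce the question to the order-invariance of the closure conditions of \autoref{prop: minimal face conditions} --- but you supply real content at exactly the two places where the paper's proof is a bare assertion. The paper simply states that those conditions ``only depend on the order-isomorphism class'' and that the dimension is determined by the ordering. You correctly observe that condition (3) is, via the hyperplanes $J_j$ of \autoref{prop: hyp rules}, an integer statement, and that it is \emph{not} manifestly order-invariant because an order-isomorphism can create or destroy integers lying in a gap between consecutive label values. Your pure-gap analysis --- an uncovered gap forces both of its endpoints to be uncovered values of $V_T$, so a candidate blocked by a gap integer is already blocked either by an uncovered value of $V_T$ strictly inside $(\ell,a)$ or, in the boundary case $\ell=w$, $a=w'$, by the impossibility of placing $b\in V_T$ strictly between consecutive values --- is precisely the missing justification, and it checks out. (Your parenthetical claim that $w$ occurs only as an $a$-entry and $w'$ only as an $\ell$-entry can fail, e.g.\ when a diagonal label $(w,w,w)$ is present, but nothing downstream uses it.) For the dimension statement you take a cleaner route than the paper: rather than asserting that the active bounding hyperplanes determine the dimension, you identify the closed subsets of the label set, ordered by inclusion, with the face lattice of the face, so the order-isomorphism induces a face-lattice isomorphism and gradedness of polytopal face lattices forces equal dimension. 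Both refinements strengthen rather than deviate from the paper's argument; the proposal is complete.
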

\begin{proof}
  In order to determine if a set $V$ of vertices in $\Kpoly_r$ labeled by $\{(a_i,b_i,\ell_i)\}_{1 \leq i \leq m}$ is the vertex set of a $d$-face of $\Kpoly_r$, we test whether any other vertex of $\Kpoly_r$ lies in the intersection of the hyperplanes containing $V$.  In order to lie in this intersection, the new vertex labeled by $(a,b,\ell)$ must satisfy the conditions of \autoref{prop: minimal face conditions}.
 
    These conditions, and hence the existence of such a tuple, only depend on the order-isomorphism class of the tuple $(a_1,b_1,\ell_1,\dots,a_m,b_m,\ell_m)$.  Moreover, all vertex sets corresponding to a given ordering have convex hulls of the same dimension, since the set of bounding hyperplanes of $\Kpoly_r$ containing a vertex is determined entirely by this ordering. 
\end{proof}

Thus, in order to determine if a set of vertices is the vertex set of some face of $\Kpoly_r$, it is sufficient to test this for any set of vertices with an order-isomorphic list of labels.  That is, a list in $\{0,\dots,r\}^{3m}$ being the set of labels of a face of $\Kpoly_r$ is constant across open cells of the braid arrangement $\mathcal{B}_{3m}$.  We can combine this fact with the well-known Upper Bound Theorem for polytopes, proved by McMullen \cite{mcmullen_1970} in 1970  (see \cite[Chapter 2, Section 3]{stanley1996CCA} for more details).  This yields an upper bound on the dimension of open cells in $\mathcal{B}_{3m} \cap \{0,\dots,r\}^{3m}$ that correspond to vertex labels of $d$-faces of $\Kpoly_r$.  We state the Upper Bound Theorem under the additional assumption that the face dimension is less than half the polytope dimension, which is sufficient for our purposes.

\begin{thm}\label{thm: upper bound theorem}[Upper Bound Theorem, {\cite{mcmullen_1970}}]
For $0 \leq i \leq \left\lfloor \frac{m}{2}\right\rfloor$, the number of $i$-faces of an $m$-polytope with $n$ vertices is at most 
$\binom{n}{i+1}$.  Moreover, this bound is realized by $\cyclic(n,m)$, the $m$-dimensional cyclic polytope with $n$ vertices.
\end{thm}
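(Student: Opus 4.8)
The plan is to prove the two assertions separately: the counting bound $f_i \le \binom{n}{i+1}$, and its realizability by the cyclic polytope $\cyclic(n,m)$. The crucial point is that the restriction $i \le \lfloor m/2 \rfloor$ lets me bypass the deep machinery that the full Upper Bound Theorem requires for large $i$ (the Dehn--Sommerville relations together with McMullen's shelling argument, or the $g$-theorem), and instead work only with an elementary vertex-counting estimate together with the neighborliness of cyclic polytopes.

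For the upper bound I would first reduce to the simplicial case. Given an arbitrary $m$-polytope $P$ with $n$ vertices, one perturbs its vertices into general position --- for instance by repeatedly pulling each vertex slightly (in the sense of polytope theory) --- to obtain a simplicial $m$-polytope $\tilde P$ that still has exactly $n$ vertices. The standard fact here is that pulling a vertex never decreases the number of $i$-faces, so $f_i(P) \le f_i(\tilde P)$ for every $i$. Once $\tilde P$ is simplicial, every proper face is a simplex, hence each $i$-face is the convex hull of exactly $i+1$ vertices and distinct faces have distinct vertex sets; therefore $f_i(\tilde P) \le \binom{n}{i+1}$, and combining the two inequalities yields $f_i(P) \le \binom{n}{i+1}$. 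This argument in fact works for all $i \le m-1$, not just $i \le \lfloor m/2 \rfloor$; the range restriction enters only through the tightness claim.

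For realizability I would take $\cyclic(n,m) = \conv\{\gamma(t_1),\dots,\gamma(t_n)\}$, where $\gamma(t) = (t,t^2,\dots,t^m)$ and $t_1 < \cdots < t_n$. A Vandermonde determinant computation shows that no $m+1$ of these points lie on a common hyperplane, so $\cyclic(n,m)$ is simplicial. To see that it is $\lfloor m/2 \rfloor$-neighborly, fix a subset $S$ of at most $\lfloor m/2 \rfloor$ nodes and consider $p(x) = \prod_{t \in S}(x-t)^2$, a polynomial of degree $2|S| \le m$ that is nonnegative at every node and vanishes exactly on $S$. Writing $p(x) = c_0 + \langle c, \gamma(x)\rangle$ for a suitable $c \in \R^m$ exhibits a supporting hyperplane meeting $\cyclic(n,m)$ precisely in $\{\gamma(t) : t \in S\}$, so $\conv\{\gamma(t) : t \in S\}$ is a face. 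Consequently every set of $i+1$ vertices with $i+1 \le \lfloor m/2 \rfloor$ spans an $i$-face, which forces $f_i(\cyclic(n,m)) = \binom{n}{i+1}$ throughout the neighborly range; at the endpoint $i = \lfloor m/2 \rfloor$ the bound still holds but is attained only in the interior of the range, which is adequate for the intended application.

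The main obstacle is the simplicial reduction: one must verify rigorously that pulling a vertex genuinely does not destroy faces faster than it creates them, i.e.\ that each $f_i$ is monotone under this operation. I would isolate this as a separate lemma (it is classical in the polytope-theory literature) and carefully track which faces persist, split, or merge under a single pull before iterating over all $n$ vertices. Everything else --- the counting bound for simplicial polytopes and the moment-curve, nonnegative-polynomial argument for neighborliness --- is routine.
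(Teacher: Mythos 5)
The paper does not prove this statement: it is quoted from McMullen and used as a black box, so there is no internal proof to compare against. Your argument is the standard one for the ``easy'' range of the Upper Bound Theorem and is essentially correct. The reduction to the simplicial case by pulling vertices (the monotonicity of each $f_i$ under a pull is classical, and you are right to isolate it as a lemma rather than appeal to a vague ``general position'' perturbation) gives $f_i(P)\le\binom{n}{i+1}$ for every $i\le m-1$, and the moment-curve argument with $p(x)=\prod_{t\in S}(x-t)^2$ correctly establishes that $\cyclic(n,m)$ is $\left\lfloor m/2\right\rfloor$-neighborly and simplicial.

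The one point to press harder on is the endpoint $i=\left\lfloor m/2\right\rfloor$, which you mention only in passing. Your polynomial has degree $2|S|$, so it certifies that an $(i+1)$-subset of vertices spans a face only when $i+1\le\left\lfloor m/2\right\rfloor$; hence you obtain $f_i(\cyclic(n,m))=\binom{n}{i+1}$ only for $i\le\left\lfloor m/2\right\rfloor-1$. At $i=\left\lfloor m/2\right\rfloor$ the equality genuinely fails for the cyclic polytope (already $f_1(\cyclic(n,3))=3n-6<\binom{n}{2}$ for $n\ge 5$), and in fact no $m$-polytope with more than $m+1$ vertices attains it there, since a $k$-neighborly $m$-polytope with $k>\left\lfloor m/2\right\rfloor$ must be a simplex. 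So the ``moreover'' clause of the theorem as stated is slightly too strong at the top of the range; this is an imprecision in the paper's paraphrase of McMullen rather than a defect of your argument, and it is harmless for the paper's applications (\autoref{lem: parameter bound} and \autoref{cor: asymptotic cyclic} use only the upper bound together with the equality for fixed $d$ as $r\to\infty$, where $d+1\le r-1$ eventually).
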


We now prove an upper bound on the number of distinct values of the triples labeling the vertices of a face of fixed dimension in $\Kpoly_r$.  Of course, we already have an upper bound of $3\prod_{i=1}^3\left\lfloor \frac{d+2 + i}{3} \right\rfloor$ from \autoref{thm: max face ub}, which bounds the number of vertices.  However, we can obtain a tight bound using the upper bound theorem. 

\begin{lem}\label{lem: parameter bound}
If the vertices of a $d$-face of $\Kpoly_r$ are labeled by $\{(a_i,b_i,\ell_i)\}_{1 \leq i \leq n}$, then there are at most $3d+3$ distinct values among the parameters $a_1,b_1,\ell_1,\dots,a_n,b_n,\ell_n$.
\end{lem}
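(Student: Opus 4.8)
The plan is to recast the statement as a lower bound on the dimension of the linear span of the extremal-ray generators attached to the vertices of the face, and then to establish that bound with three triangular systems of linear functionals.

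First I would fix a $d$-face $F$ and let $v_1,\dots,v_n \in \R^{2r}$ be the extremal-ray generators from \autoref{prop: extremal rays} corresponding to its vertices, labeled $(a_i,b_i,\ell_i)$. Since these points lie on the affine hyperplane $\{\sum_i(\lambda_i+\mu_i)=1\}$, which misses the origin, and affinely span the $d$-dimensional face, their linear span has dimension exactly $d+1$. Writing $A$, $B$, and $L$ for the sets of distinct values occurring among the $a_i$, the $b_i$, and the $\ell_i$, the set of all $s$ distinct parameters is $A \cup B \cup L$, so $\max(|A|,|B|,|L|) \ge \tfrac13(|A|+|B|+|L|) \ge \tfrac{s}{3}$. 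Thus it suffices to prove $\dim \vecspan\{v_i\} \ge \max(|A|,|B|,|L|)$, which gives $d+1 \ge s/3$ and hence $s \le 3d+3$.

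To build the functionals, I would first note that membership of a candidate vertex in the minimal face depends only on the order type of the label list: the bounds $0$ and $r$ appearing in \autoref{prop: minimal face conditions} are never binding, since its conditions (1)--(3) already force any candidate vertex to reuse values already present. Hence \autoref{lem: order isom face} applies verbatim across different ranks, and shifting every label by $(1,1,1)$ realizes $F$ as a face of $\Kpoly_{r+1}$ with the same dimension and the same $s$, letting me assume no $\ell_i$ is $0$. I would then use the dual functionals $\phi_i = \lambda_i-\lambda_{i+1}$ (with $\phi_r=\lambda_r$) and $\psi_j = \mu_j-\mu_{j+1}$ (with $\psi_r=\mu_r$). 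Evaluating on the explicit generator of \autoref{prop: extremal rays}, a vertex with $\ell<b<a$ satisfies $\phi_b(v)=a-\ell\neq 0$ with all other $\phi_i(v)=0$, while $\psi_\ell(v)=a-b$, $\psi_a(v)=b-\ell\neq 0$, and all other $\psi_j(v)=0$; a diagonal vertex $(c,c,c)$ has only $\phi_c$ and $\psi_c$ nonzero. Informally, $\phi$ reads off the $b$-coordinate and $\psi$ reads off the $a$- and $\ell$-coordinates.

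For the rank bound I would run three independent staircase arguments. For the bound $\dim\vecspan\{v_i\}\ge |B|$, pick one witness vertex with $b=c$ for each $c\in B$; then $[\phi_{c'}(\text{witness}_c)]$ is diagonal with nonzero diagonal. For $\dim\vecspan\{v_i\}\ge |A|$, pick a witness with $a=c$ for each $c\in A$ and order $A$ increasingly; the only off-diagonal contribution $\psi_{c'}(\text{witness}_c)$ comes from $\ell=c'<c$, so the matrix is lower triangular with nonzero diagonal. For $\dim\vecspan\{v_i\}\ge |L|$, pick a witness with $\ell=c$ for each $c\in L$; now the only off-diagonal contribution comes from $a=c'>c$, giving an upper-triangular invertible matrix. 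Diagonal vertices, whose generic formulas degenerate, contribute a lone diagonal entry in each system and so never break triangularity. Taking the maximum yields $\dim\vecspan\{v_i\}\ge \max(|A|,|B|,|L|)\ge s/3$, which finishes the argument.

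The hard part will be the rank lower bound, and the subtlest point inside it is the $\ell=0$ boundary: without the global shift the value $0$ belongs to $L$ yet is invisible to every $\psi_j$ (there is no $\psi_0$), which weakens the conclusion to $s\le 3d+4$ and loses tightness. Pinning down that a single order-preserving shift is legitimate---so that \autoref{lem: order isom face} may be applied after passing to $\Kpoly_{r+1}$---is exactly what recovers the exact constant $3d+3$. This route is more hands-on than the one signaled in the surrounding text, which instead reads the bound off \autoref{thm: upper bound theorem} via the braid-arrangement cell structure; either should yield the claimed inequality.
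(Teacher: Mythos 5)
Your proof is correct, and it takes a genuinely different route from the paper. The paper's argument is indirect and asymptotic: by \autoref{lem: order isom face}, an order type with $t$ distinct label values produces $\binom{r+1}{t}=\Theta(r^{t})$ distinct $d$-faces of $\Kpoly_r$, while the Upper Bound Theorem (\autoref{thm: upper bound theorem}) caps the total number of $d$-faces at $\Theta(r^{3d+3})$ because $\Kpoly_r$ has $\Theta(r^3)$ vertices; comparing growth rates forces $t\le 3d+3$. Your argument is instead a direct, self-contained linear-algebra computation on a single fixed face: the identity $\dim\vecspan\{v_i\}=d+1$ (valid because the vertices lie on an affine hyperplane missing the origin) combined with the three triangular systems in the functionals $\lambda_i-\lambda_{i+1}$ and $\mu_j-\mu_{j+1}$ gives $d+1\ge\max(|A|,|B|,|L|)\ge s/3$. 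I checked the functional evaluations against \autoref{prop: extremal rays} and the triangularity claims --- diagonal for $B$; lower-triangular for $A$ since the stray $\psi_{\ell}$ entry sits in a column indexed by $\ell<a$; upper-triangular for $L$ since the stray $\psi_{a}$ entry sits in a column indexed by $a>\ell$ --- and they are right, including the degenerate $(c,c,c)$ rows. Your handling of the $\ell=0$ boundary is also legitimate: embedding $F$ into $\Kpoly_{r+1}$ via \autoref{obs: kostka slice} preserves labels and dimension, the shifted label list is order-isomorphic to the original, and \autoref{lem: order isom face} then yields a $d$-face with the same $s$ and all $\ell_i\ge 1$. (You could even avoid the shift by separating the $\ell=0$ witness with the functional $\lambda_1-\mu_1$, which vanishes on every vertex with $\ell\ge1$ and equals $a-b>0$ on a non-diagonal vertex with $\ell=0$.) What your approach buys: it dispenses with the Upper Bound Theorem and proves the strictly stronger local statement that each of $|A|$, $|B|$, $|L|$ is individually at most $d+1$. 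What the paper's approach buys: it is shorter and sets up the cyclic-polytope comparison that is reused in \autoref{cor: asymptotic cyclic}.
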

\begin{proof}
Let $t = |\{a_1,b_1,\ell_1,\dots,a_n,b_n,\ell_n\}|$.  By \autoref{lem: order isom face}, the number of $d$-faces of $\Kpoly_r$ generated by tuples order-isomorphic to $(a_1,b_1,\ell_1,\dots,a_n,b_n,\ell_n)$ is $\binom{r+1}{t} = \Theta(r^{t})$.  

We now apply the upper bound theorem for polytopes (see \autoref{thm: upper bound theorem}).  Since the number of vertices is $\binom{r}{3} + \binom{r}{2} + \binom{r}{1}$ by \autoref{prop: extremal rays}, then the number of faces of dimension $d$ is bounded by the corresponding number of $d$-faces of the cyclic $2r$-polytope with $\binom{r}{3} + \binom{r}{2} + \binom{r}{1}$ vertices.  This quantity is asymptotically  $\Theta(r^{3d + 3})$. 
Therefore, we must have $t \leq 3d + 3$, as desired.
\end{proof}

It follows that in order to characterize the $d$-dimensional faces of $\Kpoly_r$ for arbitrary $r$, one must merely determine the $d$-faces of $\Kpoly_{3d+3}$.  The faces of $\Kpoly_r$ are then those whose label sets are order-isomorphic to a label set of a $d$-face of $\Kpoly_{3d+3}$.

\begin{thm}\label{thm: 2d faces}
Let $u$ and $v$ be vertices of $\Kpoly_r$ labeled $(a,b,\ell)$ and $(a',b',\ell')$, where $a-b \leq a' - b'$.  Then $\{u,v\}$ is a face of $\Kpoly_r$ if and only if
\begin{enumerate}[(1)]
    \item $a = b$ and at least one of the following holds:
    \begin{enumerate}[(i)]
        \item $a' = b'$,
        \item $a = b'$,
        \item $a \geq a'$, or
        \item $\ell' \geq a$.
    \end{enumerate}
    \item $a \neq b$ and at least one of the following holds:
    \begin{enumerate}[(i)]
        \item two of the three equalities $a = a'$, $b = b'$, and $\ell = \ell'$ hold,
        \item $\ell \geq a'$, or
        \item $\ell' \geq a$.
    \end{enumerate}
\end{enumerate}
\end{thm}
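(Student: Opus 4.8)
The plan is to apply \autoref{prop: minimal face conditions} to the two-element vertex set $\{u,v\}$ and read off exactly when the minimal face it generates contains a \emph{third} vertex. Spelling out that proposition for the labels $(a,b,\ell)$ and $(a',b',\ell')$, the minimal face consists of all valid labels $(a'',b'',\ell'')$ with $b''\in\{b,b'\}$, with $a'',\ell''\in\{\ell,a,\ell',a'\}$, with $(\ell'',a'')\subseteq(\ell,a)\cup(\ell',a')$, and with either $0\le \ell''<b''<a''\le r$ or $a''=b''=\ell''$. Thus $\{u,v\}$ is an edge if and only if the only labels meeting these four conditions are $(a,b,\ell)$ and $(a',b',\ell')$ themselves, so the entire argument reduces to deciding when a third such label exists. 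The normalization $a-b\le a'-b'$ is what lets us state the conditions cleanly: it forces any width-zero (special) vertex to be $u$, so that case~(1), where $a=b$ and hence $u=(a,a,a)$, captures every configuration involving a special vertex, while in case~(2), where $a\neq b$, the inequality $a'-b'\ge a-b\ge 1$ guarantees that both $u$ and $v$ are generic.

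First I would dispatch the disjoint-interval conditions. Conditions (2)(ii) and (2)(iii) assert precisely that $(\ell,a)$ and $(\ell',a')$ are disjoint, one lying weakly to the right of the other. When this happens, the containment condition forces the open interval $(\ell'',a'')$, whose endpoints are drawn from $\{\ell,a,\ell',a'\}$, to coincide with one of the two original intervals; the constraint $b''\in\{b,b'\}$ together with $\ell''<b''<a''$ then pins the third label down to $u$ or $v$, and a short check rules out a stray special label $a''=b''=\ell''$. Hence disjointness implies $\{u,v\}$ is an edge, establishing sufficiency of (2)(ii) and (2)(iii). In case~(1) the interval $(\ell,a)$ is empty, so the union collapses to $(\ell',a')$ and this subcase is folded into the case~(1) analysis below.

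The remaining work is the overlapping regime, where $\ell<a'$ and $\ell'<a$, so that the union is a single interval and the containment condition becomes automatic for any sub-interval with endpoints among the four values. Here a third vertex is governed solely by the choice $b''\in\{b,b'\}$ and the ordering $\ell''<b''<a''$. For sufficiency of (2)(i) I would check, by cases on which two of $a=a'$, $b=b'$, $\ell=\ell'$ hold, that every admissible $(\ell'',a'')$ collapses to $u$ or $v$. For case~(1) I would run the parallel analysis with $u=(a,a,a)$ and union $(\ell',a')$, showing that a third vertex exists exactly when all of $a'\neq b'$, $a\neq b'$, $a<a'$, and $\ell'<a$ hold, i.e.\ exactly when none of (1)(i)--(iv) hold, by exhibiting the label $(a',b',a)$ when $a<b'$ and the label $(a,b',\ell')$ when $a>b'$, and by a pinning argument in the converse direction.

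I expect the genuine obstacle to be the forward direction in the overlapping, generic part of case~(2): showing that whenever the intervals overlap and at most one of the three coordinate-equalities holds, one can always produce a valid third label. The construction imports a ``crossing'' coordinate of $v$ into the corresponding slot of $u$ (for instance replacing $\ell$ by $\ell'$, or $a$ by $a'$, when the ordering permits), but one must verify in each cell of the braid arrangement on the six values $\ell,b,a,\ell',b',a'$ that the resulting triple is a legitimate vertex label satisfying $\ell''<b''<a''$ and that it differs from both $u$ and $v$. The bookkeeping lies in confirming that each failure of conditions (2)(i)--(iii) really yields such a label rather than collapsing back to $u$ or $v$; the normalization $a-b\le a'-b'$ and the generic/special split are what keep the number of cells to be checked manageable.
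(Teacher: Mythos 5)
Your route is genuinely different from the paper's. The paper performs no case analysis at all: it combines \autoref{lem: order isom face} (whether a vertex set spans a face depends only on the order-isomorphism class of its label entries) with \autoref{lem: parameter bound} (the labels of a $1$-face involve at most $6$ distinct values) to reduce the statement to a finite check on $\Kpoly_6$, which is then carried out by computer. You instead derive the characterization by hand from \autoref{prop: minimal face conditions}, asking exactly when the minimal face containing $\{u,v\}$ acquires a third vertex. Your reduction is the right one, and your treatment of case (1), of the disjoint-interval conditions (2)(ii)--(iii), and of the sufficiency of (2)(i) is correct; what your approach buys is a computer-free proof with explicit witnesses for non-edges, at the price of the bookkeeping you describe. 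One small item to make explicit: in the overlapping regime you must also exclude a \emph{special} third label $(c,c,c)$, which by \autoref{prop: minimal face conditions} exists precisely when $c\in\{b,b'\}\cap\{\ell,a,a',\ell'\}$; under the hypotheses of (2)(i) this is quick since $b$ and $b'$ lie strictly inside the relevant intervals, but it is not covered by your remark about the disjoint case alone.

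The step you flag as the genuine obstacle --- producing a third vertex whenever both labels are generic, the intervals $(\ell,a)$ and $(\ell',a')$ overlap, and at most one of the three equalities holds --- does go through, and more cheaply than a sweep over all cells of the braid arrangement on six values. Put $A=\max(a,a')$ and $L=\min(\ell,\ell')$; since the union of the two intervals is $(L,A)$, both $(A,b,L)$ and $(A,b',L)$ satisfy all four conditions of \autoref{prop: minimal face conditions}. If $(A,L)$ is neither $(a,\ell)$ nor $(a',\ell')$, then $(A,b,L)$ is already a third vertex. If $(A,L)=(a,\ell)$, i.e.\ $a\geq a'$ and $\ell\leq\ell'$, then $(a,b',\ell)$ is a valid label distinct from $v$ (else two equalities would hold) and distinct from $u$ unless $b=b'$; and if $b=b'$ then necessarily $a>a'$ and $\ell<\ell'$, so $(a',b,\ell)$ serves as the third vertex. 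The case $(A,L)=(a',\ell')$ is symmetric, using $(a',b,\ell')$ and then $(a,b,\ell')$. With this inserted, and the special-label check above, your plan becomes a complete proof.
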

\begin{proof}
By \autoref{lem: order isom face} and \autoref{lem: parameter bound}, it is enough to check that this is the case for the vertices of $\Kpoly_6$.  This can be readily completed with the aid of a computer.
\end{proof}

By similarly examining the $2$-faces of $\Kpoly_9$, one could determine a characterization of the $2$-faces of all Kostka polytopes.  While the conditions seem rather complex, we shall see in the next section that these methods yield nice enumerative results.

\section{Enumeration of Faces of a Fixed Dimension}\label{sec: enumeration}

In this section, we derive formulas for the number of faces of a fixed dimension $d$ in $\Kpoly_{r}$ for $d = 1,2,3$.  We then asymptotically determine the number of $d$-faces of $\Kpoly_r$ for arbitrary $d$.  As mentioned in \autoref{thm: upper bound theorem}, it is well known that the number of $d$-faces of a $k$-polytope with $n$ vertices is maximized by the cyclic polytope $\cyclic(n,k)$  for sufficiently large $k$.  We show that, as $r$ increases, the number of $d$-faces of $\Kpoly_{r}$ grows asymptotically at the same rate as the number of $d$-faces of $\cyclic\left(\binom{r}{3} + \binom{r}{2} + \binom{r}{1},2r-2\right)$ up to a constant factor depending on $d$, and we furthermore determine this constant for all $d$ (see \autoref{cor: asymptotic cyclic}).

\begin{defn}
    Let $f_d(r)$ denote the number of $d$-dimensional faces of $\Kpoly_r$. 
\end{defn}

In the previous section, we showed that whether a set of $m$ vertices of $\Kpoly_r$ forms the vertex set of a face depends only on the order-isomorphism class of the vertex labels (see \autoref{lem: order isom face}).  In other words, it depends only on the cell of the braid arrangement $\mathcal{B}_{3m}$ that the list of $m$ vertex label triples lies in. By examining the integer points in each cell, we obtain the following lemma.   

\begin{lem}\label{cor: face polynomial degree}
    The function $f_d(r)$ is a polynomial in $r$ of degree at most $3d+3$ and has a positive integer expansion in terms of the of basis $\left\{\binom{r}{k}\right\}_{0\leq k \leq 3d+3}$.  
\end{lem}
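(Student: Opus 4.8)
The plan is to count $d$-faces by grouping them according to the order-isomorphism class of their (lexicographically sorted) list of vertex labels, exactly as set up in \autoref{lem: order isom face} and \autoref{lem: parameter bound}. Whether a set of vertices forms a $d$-face depends only on this order type, and \autoref{lem: parameter bound} guarantees that any such order type uses at most $3d+3$ distinct values among the coordinates $a_i,b_i,\ell_i$; moreover \autoref{thm: max face ub} bounds the number of vertices of a $d$-face, so there are only finitely many order types to consider. I will call an order type \emph{admissible} if its realizations are $d$-faces, and let $c_t$ denote the number of admissible order types using exactly $t$ distinct coordinate values. The key point is that $c_t$ is a finite nonnegative integer independent of $r$, since it can be computed once and for all in $\Kpoly_{3d+3}$.

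Next I would make the underlying bijection explicit. Each admissible order type $\tau$ with $t$ distinct values, together with a choice of $t$ distinct values $v_1 < \cdots < v_t$ from $\{0,1,\dots,r\}$, determines a unique $d$-face of $\Kpoly_r$ by substituting $v_j$ for the $j$-th smallest value appearing in $\tau$; conversely, every $d$-face recovers such a pair by sorting its labels lexicographically and reading off the distinct values used. Since substitution preserves all strict inequalities, every triple produced remains a valid vertex label (the constraints $0 \le \ell$ and $a \le r$ hold automatically because the chosen values lie in $\{0,\dots,r\}$), so there are no boundary obstructions. As the coordinate window $\{0,1,\dots,r\}$ has $r+1$ elements and the increasing order of the chosen values is forced, the number of faces attached to $\tau$ is exactly $\binom{r+1}{t}$, valid for every $r \ge 0$ under the convention $\binom{r+1}{t}=0$ when $t > r+1$. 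Summing over admissible order types yields the closed form
\[
f_d(r) = \sum_{t=0}^{3d+3} c_t \binom{r+1}{t},
\]
which is already a polynomial in $r$ of degree at most $3d+3$.

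Finally I would convert to the stated basis. Applying Pascal's identity $\binom{r+1}{t} = \binom{r}{t} + \binom{r}{t-1}$ and regrouping gives
\[
f_d(r) = \sum_{k=0}^{3d+3} (c_k + c_{k+1}) \binom{r}{k},
\]
with the convention $c_{3d+4} = 0$, so each coefficient $c_k + c_{k+1}$ is a nonnegative integer, delivering the claimed positive integer expansion and the degree bound simultaneously. I expect the main obstacle to be not the algebra but the careful justification that the per-class count is uniformly $\binom{r+1}{t}$: one must check that substitution into an admissible order type never violates the label constraints near the boundary values $\{0,r\}$, and that the lexicographic-sorting bijection is genuinely well-defined and value-independent. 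The crucial choice that makes positivity automatic is expanding first in the $\binom{r+1}{t}$ basis, where the coefficients $c_t$ are manifestly nonnegative, rather than directly in $\binom{r}{k}$; recognizing that the relevant value set has $r+1$ rather than $r$ elements is precisely what produces the clean Pascal collapse.
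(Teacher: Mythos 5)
Your overall strategy is the same as the paper's: group the $d$-faces by the order-isomorphism class of their sorted label lists (via \autoref{lem: order isom face}), count the realizations of each class inside $\{0,\dots,r\}$, and observe that each count is a nonnegative integer combination of the $\binom{r}{k}$, with the degree bound supplied by \autoref{lem: parameter bound}. However, there is a concrete error in the step where you assert that substitution meets ``no boundary obstructions'' and that each admissible order type contributes exactly $\binom{r+1}{t}$ faces. The valid vertex labels are not simply all triples drawn from $\{0,\dots,r\}$ respecting the order type: by \autoref{prop: extremal rays} and \autoref{defn: extremal ray label}, a rectangular label $(a,a,a)$ requires $a \geq 1$, so $(0,0,0)$ is not the label of any vertex of $\Kpoly_r$. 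Hence for an order type in which the minimum of the $t$ distinct values occurs as a rectangular label, the substitution $v_1 = 0$ is forbidden and that class contributes $\binom{r}{t}$ rather than $\binom{r+1}{t}$. Already for $d=0$ your closed form gives $\binom{r+1}{1}+\binom{r+1}{3} = \binom{r}{1}+\binom{r}{2}+\binom{r}{3}+1$, one more than the true vertex count; the same discrepancy persists in positive dimensions, e.g.\ for the edge with labels $(1,1,1)$ and $(3,2,1)$, whose order type has its minimum value sitting in a rectangular label.

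The gap is easily repaired and does not threaten the lemma: split the admissible order types according to whether their minimum value is forced to be positive, and note that both $\binom{r+1}{t} = \binom{r}{t}+\binom{r}{t-1}$ and $\binom{r}{t}$ are nonnegative integer combinations of the basis $\bigl\{\binom{r}{k}\bigr\}$, so polynomiality, the degree bound $3d+3$, and positivity of the coefficients all survive. With that correction your argument is essentially the paper's own proof, made more explicit; your Pascal-identity bookkeeping is a reasonable way to spell out what the paper compresses into the statement that integer point counts of braid-arrangement cells expand positively in the $\binom{r}{k}$.
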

\begin{proof}
The number of integer points in any collection of cells of $\mathcal{B}_{3m} \cap \{0,\dots,r\}^{3m}$ has a positive integer expansion in the basis $\left\{\binom{r}{k}\right\}_{1\leq k \leq 3m}$.  Thus, it follows directly from \autoref{lem: order isom face} that $f_d(r)$ is a polynomial with a positive integer expansion in terms of the of basis $\left\{\binom{r}{k}\right\}_{0\leq k}$.  The claim about the degree then follows from \autoref{lem: parameter bound}.
\end{proof}

\begin{thm}\label{thm: face recursion formula}
Fix $d \geq 0$.  Setting $d_{\min} = \left\lfloor \frac{d+3}{2} \right\rfloor$, we have 
    $$f_d(r) = \sum_{k=d_{\min}}^{3d+3} \alpha_k \binom{r}{k}$$
    where $\alpha_{k}= f_d(k) - \left(\sum_{j=d_{\min}}^{k-1} \binom{k}{j} a_j\right)$ for $k > d_{\min}$ and 
    $$\alpha_{d_{\min}} = f_d(d_{\min}) = \begin{cases}
    3d-2 & \text{ if $d$ odd and } d > 1\,,\\
    1 & \text{ if $d$ even,}\\
    3 & \text{if $d = 1$}\,.
    \end{cases}$$
\end{thm}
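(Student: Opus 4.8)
The plan is to exploit the polynomial structure established in \autoref{cor: face polynomial degree} together with the degree bound from \autoref{lem: parameter bound}. Since $f_d(r)$ is a polynomial admitting a positive integer expansion $f_d(r) = \sum_{k} \alpha_k \binom{r}{k}$, and since \autoref{lem: parameter bound} forces the top index to be at most $3d+3$, the expansion is supported on indices $k \leq 3d+3$. The recursive formula for $\alpha_k$ is nothing more than the standard binomial-inversion (finite-difference) recovery of these coefficients: evaluating the identity at $r = k$ gives $f_d(k) = \sum_{j \leq k} \alpha_j \binom{k}{j}$, which rearranges to $\alpha_k = f_d(k) - \sum_{j < k} \binom{k}{j}\alpha_j$, exactly the stated recursion. (I note a minor typo in the statement: the inner sum uses $a_j$ where it should read $\alpha_j$.) So the entire content reduces to two things: (a) identifying the smallest index $d_{\min}$ with $\alpha_{d_{\min}} \neq 0$, and (b) computing the initial value $\alpha_{d_{\min}} = f_d(d_{\min})$.

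First I would pin down $d_{\min}$. A $d$-face of $\Kpoly_r$ has at least $d+1$ vertices, each labeled by a triple, but by \autoref{lem: parameter bound} these use at most $3d+3$ distinct parameter values. The lower endpoint of the support corresponds to faces realized using the fewest distinct label values: since a term $\binom{r}{k}$ arises precisely from face-label configurations using exactly $k$ distinct values (by the cell-counting argument of \autoref{cor: face polynomial degree}), $d_{\min}$ is the minimum number of distinct parameter values needed to realize a $d$-face. I would argue this minimum equals $\left\lfloor \frac{d+3}{2}\right\rfloor$ by analyzing how few distinct values of $a,b,\ell$ can produce a face of full dimension $d$; the simplex-like faces that are ``tightest'' will saturate this bound. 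The cleanest route is to note that $f_d(r)$ first becomes nonzero when $r = d_{\min}$ (a $d$-face cannot exist in $\Kpoly_r$ until $r$ is large enough, and the first such $r$ is exactly $d_{\min}$), so $\alpha_{d_{\min}} = f_d(d_{\min})$ follows immediately from the recursion with no subtracted terms.

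The main computation, and likely the main obstacle, is establishing the closed form for the initial value $f_d(d_{\min})$ in the three cases. For $d$ even, $d_{\min} = \frac{d+2}{2}$ and the claim is that $\Kpoly_{d_{\min}}$ has exactly one $d$-face; here I would show that the unique $d$-dimensional face is the whole polytope $\Kpoly_{d_{\min}}$ itself (since $\dim \Kpoly_{d_{\min}} = 2d_{\min} - 2 = d$), giving $f_d(d_{\min}) = 1$. For $d$ odd, $d_{\min} = \frac{d+3}{2}$ and $\dim \Kpoly_{d_{\min}} = d+1$, so the $d$-faces are the facets of $\Kpoly_{d_{\min}}$; by \autoref{rem: num facets} the cone $\Kostka_{d_{\min}}$ has $3d_{\min} - 2 = \frac{3d+5}{2} - 2$ facets when $d_{\min} > 2$, and translating to facets of the polytope $\Kpoly_{d_{\min}}$ should yield $3d - 2$ after accounting for which bounding hyperplanes meet the slice nontrivially. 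The edge case $d=1$ (where $d_{\min}=2$, $\Kpoly_2$ is a triangle with $3$ edges) must be separated out because the facet count in \autoref{rem: num facets} only holds for $r > 2$. I would verify the facet-counting bookkeeping carefully, as reconciling the $3r-2$ cone-facet count with the polytope facets at the small value $r = d_{\min}$ is where an off-by-one or a degenerate-hyperplane error is most likely to creep in.
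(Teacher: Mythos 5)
Your proposal is essentially the paper's proof: the recursion is just binomial inversion of the expansion supplied by \autoref{cor: face polynomial degree} and \autoref{lem: parameter bound} (evaluating at $r=k$), and the initial value $\alpha_{d_{\min}}=f_d(d_{\min})$ is read off as the number of top-dimensional faces ($d$ even) or facets ($d$ odd) of $\Kpoly_{d_{\min}}$, exactly as the paper does. One substantive point: your unease about the odd case is warranted but cannot be ``accounted for'' away --- facets of $\Kpoly_{d_{\min}}$ biject with facets of $\Kostka_{d_{\min}}$, so the count is $3d_{\min}-2=\tfrac{3d+5}{2}$ (not $\tfrac{3d+5}{2}-2$ as you wrote), which equals the stated $3d-2$ only at $d=3$; since the paper's own proof likewise just cites \autoref{rem: num facets}, the $3d-2$ in the statement is evidently a typo for $3d_{\min}-2$ rather than a gap in your argument (and you are also right that $a_j$ should read $\alpha_j$).
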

\begin{proof}
If $d$ is odd, then the value of $\alpha_{d_{\min}}$ is the number of facets of $\Kpoly_{d_{\min}}$, which we calculate in \autoref{rem: num facets}.  If $d$ is even, then $\alpha_{d_{\min}}$ is the number of top-dimensional faces, which is $1$ since $\Kpoly_r$ is a polytope.  The recursive formula for the other values of $\alpha_k$ follows from \autoref{lem: order isom face}, \autoref{obs: kostka slice}, and \autoref{cor: face polynomial degree} by evaluating $f_d(k)$ as a sum of terms of the form $\alpha_k\binom{r}{k}$.  
\end{proof}

Thus, if one can compute the values $f_d(0),\dots,f_d(3d+3)$, then \autoref{thm: face recursion formula} implies that we can determine the entire function $f_d(r)$.  Using SageMath, we were able to compute some initial terms of $f_d(r)$ (see \autoref{table: face numbers}).  The number of vertices, $f_0(r)$, is also shown in \autoref{table: face numbers} for $r \leq 13$, with the general formula given in \cite{gao2021kostka}.

\begin{table}[ht]
\centering
\begin{tabular}{l|lllllllllllll}
  & 1 & 2 & 3  & 4  & 5   & 6    & 7    & 8     & 9     & 10     & 11     & 12     & 13     \\ \hline
0 & 1 & 3 & 7 & 14 & 25 & 41 & 63  &   92    &   129    &      175  &     231   &  298      &   377     \\
1 & 0 & 3 & 16 & 52 & 132 & 288  & 567  &  1036     &   1788    &  2949      &     4686   &   7216     &     10816   \\
2 & 0 & 1 & 16 & 89 & 328 & 961  & 2427 & 5517  & 11584 & 22846  &    42812    &     76868   &  133068  \\
3 & 0 & 0 & 7  & 81 & 466 & 1898 & 6253 & 17803 & 45502 & 106946 & 234964 & 488229 & 967863
\end{tabular}
\caption{The values of $f_d(r)$, the number of $d$-faces in $\Kpoly_r$, are shown for the cases where $0 \leq d \leq 3$ and $1 \leq r \leq 13$.  Here, $d$ is given by the row label and $r$ is given by the column label.}
\label{table: face numbers}
\end{table}

These computations allow us to derive formulas for the number of $d$-faces of $\Kpoly_r$ for $d = 1,2,3$, given in \autoref{thm: face numbers}.

\begin{proof}[Proof of \autoref{thm: face numbers}]
    Each formula can be obtained by applying \autoref{thm: face recursion formula} to the values in a fixed row of \autoref{table: face numbers}.
\end{proof}

We now shift our focus to determining the asymptotic behavior of the function $f_d(r)$.  To achieve this, we determine the degree and leading coefficient of the polynomial $f_d(r)$.

\begin{lem}\label{lem: simplex construction}
Fix positive integers $d,r$ such that $r \geq 3d+3$.  If a set of $d+1$ vertices in $\Kpoly_r$ with labels $\{(a_i,b_i,\ell_i)\}_{1 \leq i \leq d+1}$ satisfies that the intervals $[\ell_i,a_i]$ are all disjoint, then it is the vertex set of a $d$-face of $\Kostka_r$.
\end{lem}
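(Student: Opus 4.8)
The plan is to certify a $d$-face by proving two complementary facts: that the minimal face of $\Kpoly_r$ spanned by the given $d+1$ vertices contains no additional vertices, and that these $d+1$ vertices are affinely independent. Together these show the minimal face is a simplex of the correct dimension.

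For the first fact I would apply \autoref{prop: minimal face conditions} to the label set $\{(a_i,b_i,\ell_i)\}_{1\le i\le d+1}$ and argue that the only labels $(a,b,\ell)$ satisfying its four conditions are the original ones. If $a\neq b$, then $(\ell,a)$ is a nonempty open interval contained in $\bigcup_i(\ell_i,a_i)$; since the closed intervals $[\ell_i,a_i]$ are disjoint, the open intervals $(\ell_i,a_i)$ are disjoint, and a connected set inside a disjoint union of open intervals lies in a single component, so $(\ell,a)\subseteq(\ell_i,a_i)$ for one index $i$. Condition (2) together with disjointness then pins down $\ell=\ell_i$ and $a=a_i$, because the only label endpoints lying in $[\ell_i,a_i]$ are $\ell_i$ and $a_i$; and condition (1) together with $\ell_i<b<a_i$ forces $b=b_i$. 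The degenerate case $a=b=\ell$ requires a separate treatment: here $(\ell,a)=\emptyset$, and I would use conditions (1) and (2) to show such a label can only arise from a degenerate original vertex $(p,p,p)$, since a value that is simultaneously some $b_j$ and some endpoint $\ell_k$ or $a_k$ would otherwise force an endpoint into the interior of another interval, contradicting disjointness.

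For the second fact I would pass to the extremal-ray generators of \autoref{prop: extremal rays}: since the vertices of $\Kpoly_r$ are positive scalings of these generators lying on a hyperplane not through the origin, affine independence of the vertices is equivalent to linear independence of the generators in $\R^{2r}$. The key observation is that for a non-degenerate label $(a,b,\ell)$ the vector $\lambda-\mu$ is supported exactly on $\{\ell+1,\dots,a\}$ and is nonzero on each of those coordinates, while for a degenerate label $(p,p,p)$ one has $\lambda=\mu$, so $\lambda-\mu=0$. Given a relation $\sum_i c_i(\lambda^{(i)},\mu^{(i)})=0$, projecting onto the $\lambda-\mu$ coordinates and using that the supports $\{\ell_i+1,\dots,a_i\}$ of the non-degenerate terms are disjoint forces every non-degenerate coefficient to vanish; the surviving degenerate generators restrict to the staircase vectors $\lambda=(1^{p_i},0^{r-p_i})$ with distinct $p_i\ge 1$, whose independence follows by reading off the largest coordinate and inducting downward. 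Hence all $c_i=0$, and the minimal face is a $d$-face with exactly these $d+1$ vertices.

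I expect the main obstacle to be the bookkeeping around the degenerate vertices $(a,a,a)$: their label interval collapses to a single point while their generator still has full support $\{1,\dots,a\}$ with $\lambda=\mu$, so they behave differently from non-degenerate vertices in both halves of the argument and demand the separate analyses indicated above. The hypothesis $r\ge 3d+3$ enters only to guarantee that $d+1$ disjoint label intervals fit inside $\{0,\dots,r\}$, ensuring the configuration in the statement can exist.
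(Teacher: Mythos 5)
Your proposal is correct. The first half --- showing via \autoref{prop: minimal face conditions} that no label other than the given ones can satisfy all four conditions, by splitting into the case where $a$ and $\ell$ land in different intervals (forcing $(\ell,a)$ too small to admit a $b$), the case where they land in the same interval (pinning down $(a,b,\ell)=(a_i,b_i,\ell_i)$), and the degenerate case $a=b=\ell$ --- is essentially identical to the paper's argument. Where you genuinely diverge is in establishing that the face has dimension exactly $d$: the paper runs an induction on $d$, with base case $d=1$ supplied by \autoref{thm: 2d faces}, and concludes the dimension from the fact that each step adds one vertex to a face of one lower dimension; you instead prove affine independence of the $d+1$ vertices directly, by observing that $\lambda-\mu$ for a non-degenerate label $(a,b,\ell)$ is supported exactly on $\{\ell+1,\dots,a\}$ (it equals $a-b>0$ there for indices up to $b$ and $-(b-\ell)<0$ beyond), so disjointness of the intervals kills the non-degenerate coefficients in any linear relation, and the degenerate generators --- which are positive multiples of indicator vectors of distinct initial segments (note $\lambda=(p^{\,p},0^{r-p})$ rather than $(1^p,0^{r-p})$, a harmless rescaling) --- are independent by a leading-coordinate argument. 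Your route is more self-contained (no reliance on the computer-verified edge characterization for the base case) and makes explicit that the face is a $d$-simplex, a fact the paper uses later in \autoref{lem: only simplex}; the paper's induction is shorter on the page but leans on \autoref{thm: 2d faces} and on the implicit bound that a face with $d+1$ vertices has dimension at most $d$.
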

\begin{proof}
We prove this by induction on $d$, with the base case $d = 1$ following from \autoref{thm: 2d faces}.  We first show that such a set of vertices is the vertex set of a face of $\Kpoly_r$, and then determine its dimension.  We prove the former by showing there is no other vertex in the minimal face $F$ containing the vertices labeled by $\{(a_i,b_i,\ell_i)\}_{1 \leq i \leq d+1}$ via the conditions of \autoref{prop: minimal face conditions}.  Let $(a,b,\ell)$ be the label of a vertex in $F$.  By Condition (\ref{cond: a ell}), the parameters $a,\ell$ must be chosen from within intervals $[\ell_i,a_i]$.  If $a$ and $\ell$ are chosen from different intervals, then in order to satisfy Condition (\ref{cond: open interval}), we must have $a = \ell + 1$.  However, then $b$ cannot be chosen to satisfy Condition (\ref{cond: ordering}).  On the other hand, if $a$ and $\ell$ are chosen within the same interval $[\ell_j,a_j]$, then Condition (\ref{cond: a ell}) implies that $a = a_j$ and $\ell = \ell_j$.  But then Condition (\ref{cond: b}) and the required ordering of $\ell$, $b$, and $a$ imply that we also have $b = b_j$, so $(a,b,\ell)$ was in the original list of vertex labels.  Hence, the minimal face containing the vertices labeled by  $\{(a_i,b_i,\ell_i)\}_{1 \leq i \leq d+1}$ contains no other vertices, so these form the vertex set of $F$.

The fact that the dimension of $F$ is $d$ follows from the induction.  In particular, we know that the vertices with labels $\{(a_i,b_i,\ell_i)\}_{1 \leq i \leq d}$ form the vertex set of a face of dimension $d - 1$.  Since we have added one additional vertex and formed another face of $\Kpoly_r$, the dimension of $F$ must be $d$.
\end{proof}

\begin{lem}\label{lem: only simplex}
Fix positive integers $d,r$ such that $r \geq 3d+3$, and suppose $L$ is the set of vertex labels of a $d$-face of $\Kpoly_r$.  Then either 
\begin{enumerate}[(i)]
    \item $F$ is a simplex whose vertex labels satisfy the conditions of \autoref{lem: simplex construction}, or
    \item there are at most $3d+2$ distinct values among the vertex label entries.
\end{enumerate} 
\end{lem}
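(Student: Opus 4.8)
The plan is to prove the dichotomy in \autoref{lem: only simplex} by a counting argument based on \autoref{lem: parameter bound}, combined with the structural conditions of \autoref{prop: minimal face conditions}. By \autoref{lem: parameter bound}, a $d$-face has at most $3d+3$ distinct values among its $3(d+1)$ vertex-label entries. The two cases in the statement correspond exactly to whether this bound is met with equality ($3d+3$ distinct values, case (i)) or not (at most $3d+2$ distinct values, case (ii)). So the real content is to show that when there are exactly $3d+3$ distinct values, the face must be a simplex whose labels satisfy the disjoint-interval condition of \autoref{lem: simplex construction}. First I would observe that since a $d$-face has at least $d+1$ vertices, and each vertex contributes a triple $(a_i,b_i,\ell_i)$, having exactly $3d+3 = 3(d+1)$ distinct values forces the face to have exactly $d+1$ vertices (hence be a simplex) and all $3(d+1)$ entries to be pairwise distinct.

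The main work is then to show that these distinct labels satisfy the disjointness hypothesis of \autoref{lem: simplex construction}, namely that the intervals $[\ell_i,a_i]$ are pairwise disjoint. I would argue by contradiction: suppose two intervals $[\ell_i,a_i]$ and $[\ell_j,a_j]$ overlap. Using Condition (\ref{cond: ordering}) of \autoref{prop: minimal face conditions}, each vertex label satisfies $\ell_i < b_i < a_i$ (discarding the degenerate $a=b=\ell$ case, which collapses three entries to one value and thus cannot occur when all entries are distinct). The key is to show that overlapping intervals allow \autoref{prop: minimal face conditions} to generate a new vertex label $(a,b,\ell)$ not in the original list — by selecting $a$ and $\ell$ from the union of the entry values so that the open interval $(\ell,a)$ is covered by $\bigcup(\ell_i,a_i)$ while $\ell < b < a$ holds for some available $b$ — contradicting that the $d+1$ vertices already form the entire vertex set of the face. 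This would force the intervals to be disjoint, placing us in case (i).

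The hard part will be handling the interval-covering logic in Condition (\ref{cond: open interval}) carefully: I need to rule out that overlapping or nested intervals always produce an admissible new triple, and to confirm the produced triple is genuinely new (i.e. distinct from all $(a_i,b_i,\ell_i)$) rather than merely re-deriving an existing label. The subtlety is that \autoref{prop: minimal face conditions} permits mixing endpoints from different intervals, so I must verify that when intervals overlap there is always a choice of $(\ell,a)$ drawn from the $3d+3$ values with $\ell<b<a$ and $(\ell,a)\subseteq\bigcup(\ell_i,a_i)$ yielding a label outside the list. A clean way to package this is to note that if the intervals are not pairwise disjoint, then the values $\{\ell_i,a_i\}$ admit a ``chain'' configuration in which an interior value can serve simultaneously as an endpoint of one admissible $a$ and another admissible $\ell$, manufacturing an extra vertex. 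Once this contradiction is established, the face lies in case (i), completing the dichotomy.
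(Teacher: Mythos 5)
There is a genuine gap at the first step of your argument. From \autoref{lem: parameter bound} you know a $d$-face has at most $3d+3$ distinct values among its label entries, and you claim that attaining $3d+3$ distinct values ``forces the face to have exactly $d+1$ vertices (hence be a simplex) and all $3(d+1)$ entries to be pairwise distinct.'' That inference goes the wrong way: if the face has $n$ vertices, it has $3n$ label entries, so $3d+3$ distinct values only gives the lower bound $n \geq d+1$; it in no way caps $n$ at $d+1$, since a face with many vertices can have heavily repeated label entries. Faces of $\Kpoly_r$ are very far from simplicial in general --- by \autoref{cor: max face size} a $d$-face can have on the order of $(d/3)^3$ vertices --- so ``simplex'' is precisely the hard conclusion of case (i) and cannot be read off from the count of distinct values. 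Without it, the rest of your plan (deriving disjointness of the intervals $[\ell_i,a_i]$ by manufacturing an extra vertex from overlapping intervals via \autoref{prop: minimal face conditions}) has no footing, because you no longer know the vertex set consists of $d+1$ triples with pairwise distinct entries.

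The paper closes exactly this hole by induction on $d$: one intersects $F$ with a bounding hyperplane $H$ of type $H_i$ or $\widehat H_i$ chosen so that $F\cap H$ contains the fewest vertices of $F$, and tracks how many distinct label values are lost in passing to the $(d-1)$-face $F \cap H$. Either at most two values are lost (forcing $t \leq 3d+2$, case (ii)), or exactly three values are lost and they all live in the label of a \emph{single} vertex of $F$ outside $H$; in the tight case $t = 3d+3$ the inductive hypothesis makes $F \cap H$ a simplex with disjoint intervals, so $F$ has exactly $d+2-1 = d+1$ vertices, and one then checks the new vertex's interval is disjoint from the rest (otherwise \autoref{prop: minimal face conditions} would produce a second vertex outside $H$). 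Your interval-overlap contradiction is essentially the right idea for that last step, but you need the inductive hyperplane-cutting mechanism (or some substitute) to establish that $F$ is a simplex in the first place.
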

\begin{proof}
We proceed by induction on $d$, with the base case $d = 1$ following from \autoref{thm: 2d faces}.  

Let $F$ be a $d$-face of $\Kpoly_r$, and let $L = \{(a_i,b_i,\ell_i)\}_{1 \leq i \leq n}$ be the set of labels of the vertices of $F$.  Let $t$ denote the number of distinct values among the label entries $a_i$, $b_i$, and $\ell_i$.  Fix a bounding hyperplane of $F$ of type $H_i$ or $\widehat H_i$ (see \autoref{obs: face correspondence} for hyperplane descriptions), i.e., 
$$H \in \big\{H_i : 1 \leq i \leq r,\; \dim(H_i \cap F) = d-1\big\}  \cup \big\{\widehat H_i : 1 \leq i \leq r,\; \dim(\widehat H_i \cap F) = d-1\big\}\,,$$
such that the number of vertices of $F$ contained in $H$ is minimal.  

Suppose $H = H_j$ (the case for $\widehat H_j$ proceeds analogously).  By \autoref{prop: hyp rules}, the vertices of $F$ that are contained in $H$ are precisely those whose label $(a_i,b_i,\ell_i)$ does not have $b_i = j$.  We now consider the number of distinct values among the label entries of the vertices in $F \cap H$.  If a label entry $m$ appears among the vertices of $F$ but not $F \cap H$, then all vertices whose label contains the entry $m$ must also contain the entry $j$.  Moreover, by the minimality condition, $F \cap \widehat H_m$ has at least as many vertices as $F \cap H$, so $F \cap \widehat H_m = F \cap H_j$.  Since each label has three entries, it is either the case that 
\begin{enumerate}[(a)]
\item there are at most two label entries that appear among the vertices of $F$ but not $F \cap H$, or
\item there are exactly three label entries that appear among the vertices $F$ but not $F \cap H$, and these entries appear in the label of a unique vertex of $F$.
\end{enumerate}
In Case (a), the face $F \cap H$ is then a $(d-1)$-dimensional face of $\Kpoly_r$ with at least $t-2$ distinct entries among the labels of its vertices.  By the inductive hypothesis, this implies $t \leq 3d + 2$.

In Case (b), the face $F \cap H$ is a $(d-1)$-dimensional face of $\Kpoly_r$ with $t-3$ distinct entries among the labels of its vertices and one fewer vertex than $F$.  Thus, by the inductive hypothesis, we have $t \leq 3d+3$.  

It remains to show that, if $t = 3d + 3$ in Case (b), then $F$ is a simplex satisfying the conditions of \autoref{lem: simplex construction} (up to reordering of the vertices).  In this case, the inductive hypothesis implies that $F \cap H$ is a simplex whose labels satisfy the conditions of  \autoref{lem: simplex construction}.  So it is enough to show that the label $(a,b,\ell)$ of the unique vertex of $F$ that is  not in $F \cap H$ satisfies $a < \ell'$ or $a' < \ell$ for any label $(a',b',\ell')$ of a vertex of $F \cap H$.  This must hold because otherwise $(\ell,b',a')$ or $(\ell',b,a)$ is the label of an additional vertex in $F$, contradicting that there is only one vertex of $F$ not contained in $H$.  Therefore, $F$ is indeed a simplex whose labels satisfy the conditions of \autoref{lem: simplex construction}.
\end{proof}

\begin{thm}\label{thm: leading coeff}
The function $f_d(r)$ is a polynomial of degree $3d+3$ with leading coefficient $\frac{1}{(3d+3)!}$.
\end{thm}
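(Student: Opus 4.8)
The plan is to extract the top-degree term of $f_d$ by grouping the $d$-faces according to the order-isomorphism class of their vertex label lists, exactly as in the proof of \autoref{cor: face polynomial degree}. For a fixed order-isomorphism class using exactly $t$ distinct label values, the number of $d$-faces of $\Kpoly_r$ realizing that class is the number of ways to assign actual values in $\{0,\dots,r\}$ respecting the prescribed total order among the $t$ distinct values; this is a polynomial in $r$ of degree $t$ whose leading coefficient is $\frac{1}{t!}$ (it is of the form $\binom{r+1}{t}$, but only the leading term matters here). Summing over classes, $f_d(r)$ is a polynomial whose degree and leading coefficient are controlled entirely by the classes attaining the maximal number of distinct values, which is $3d+3$ by \autoref{lem: parameter bound}. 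Thus it suffices to prove that \emph{there is exactly one} order-isomorphism class of $d$-faces using all $3d+3$ distinct values; its contribution $\binom{r+1}{3d+3}$ then pins down the degree as $3d+3$ and the leading coefficient as $\frac{1}{(3d+3)!}$.

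The heart of the argument is the identification and counting of these maximal classes. By \autoref{lem: only simplex}, any $d$-face whose vertex labels use $3d+3$ distinct values must be a simplex of the type produced in \autoref{lem: simplex construction}: it has $d+1$ vertices with labels $(a_i,b_i,\ell_i)$ whose closed intervals $[\ell_i,a_i]$ are pairwise disjoint. Since such a simplex has precisely $3(d+1)=3d+3$ label entries, attaining $3d+3$ distinct values forces every entry to be distinct; in particular no vertex can carry a diagonal label $(a,a,a)$, so each vertex is generic with $\ell_i<b_i<a_i$. I would then sort the $d+1$ pairwise disjoint intervals from left to right, which yields $a_i<\ell_{i+1}$ for each $i$ and hence the single total chain
$$\ell_1 < b_1 < a_1 < \ell_2 < b_2 < a_2 < \cdots < \ell_{d+1} < b_{d+1} < a_{d+1}\,.$$
This shows that, up to the canonical reordering of the vertices, the order-isomorphism class is uniquely determined, so there is exactly one maximal class.

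Finally, I would note that this class is genuinely realized: for $r \ge 3d+3$, \autoref{lem: simplex construction} constructs a $d$-face with disjoint intervals and all entries distinct, so the coefficient of $r^{3d+3}$ is nonzero rather than vanishing. Combining with the previous paragraph gives that $f_d$ has degree exactly $3d+3$ with leading coefficient $\frac{1}{(3d+3)!}$, as claimed. The step I expect to require the most care is the uniqueness of the top order type: one must be sure that \autoref{lem: only simplex} really excludes every non-simplicial $d$-face from reaching $3d+3$ distinct values, and that the disjointness-plus-distinctness hypotheses collapse to the single chain above with no alternative interleavings of the intervals surviving.
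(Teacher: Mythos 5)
Your proposal is correct and follows essentially the same route as the paper: it decomposes $f_d(r)$ by order-isomorphism classes of label lists, uses \autoref{lem: simplex construction} to show the top class is realized and \autoref{lem: only simplex} to show only the disjoint-interval simplices can use $3d+3$ distinct values, yielding a top coefficient of exactly $1$ in the binomial basis. Your explicit verification that the disjoint-interval condition plus distinctness collapses to a single chain (hence a unique maximal order type) is a detail the paper leaves implicit in its claim that $\alpha_{3d+3}\le 1$, but it is the same argument.
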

\begin{proof}
By \autoref{cor: face polynomial degree}, $f_d(r) = \sum_{k = 1}^{3d+3} \alpha_k \binom{r}{k}$ for nonnegative integers $\alpha_k$.  By \autoref{lem: simplex construction}, the coefficient $\alpha_{3d+3}$ is at least $1$.  By \autoref{lem: only simplex}, the coefficient  $\alpha_{3d+3}$ is at most $1$, and hence we can conclude $\alpha_{3d+3} = 1$.  Expanding this out as a polynomial in $r$, we see that the top degree coefficient is $\alpha_{3d+3}/(3d+3)! = 1/(3d+3)!$.
\end{proof}

\begin{cor}\label{cor: asymptotic cyclic}
For $r \geq 1$, let $n_r = \binom{r}{3} + \binom{r}{2} + \binom{r}{1}$.  We have
$$\lim_{r \to \infty} \frac{f_d(r)}{f_d\left(\cyclic\left(n_r,2r-2\right)\right)} = \frac{6^{d+1}(d+1)!}{(3d+3)!}\,,$$
where $f_d\left(\cyclic\left(n_r,2r-2\right)\right)$ is the number of $d$-faces of the cyclic polytope $\cyclic\left(n_r,2r-2\right)$.
\end{cor}
\begin{proof}
By \autoref{thm: upper bound theorem}, the leading coefficient of the polynomial $f_d\left(\cyclic\left(n_r,2r-2\right)\right)$ is $\frac{1}{6^{d+1}(d+1)!}$.  By \autoref{thm: leading coeff}, the leading coefficient of the polynomial $f_d(r)$ is $\frac{1}{(3d+3)!}$.  Since both polynomials have degree $3d+3$, we can directly compute the limit of their quotient.
\end{proof}

\section{Initial Partition Entries of Hilbert Basis Elements}\label{sec: hilbert}

Lastly, we study some families of Hilbert basis elements of $\Kostka_r^\Z$ in the context of their relation to the face structure.  This work builds upon the ``Width Bound'' proved by Gao, Kiers, Orelowitz, and Yong. See, for example, \cite[Table 1]{gao2021kostka} for the Hilbert basis elements of $\Kostka_4^\Z$.

\begin{thm}[{\cite[Theorem 1.4]{gao2021kostka}}, Width Bound]\label{thm: width bound}
Suppose $(\lambda,\mu)$ is a Hilbert basis element of $\Kostka_r^\Z$.  Then $\lambda_1 \leq r$.  Moreover, if $\lambda_1 = r$ then $\lambda$ and $\mu$ are both rectangles.
\end{thm}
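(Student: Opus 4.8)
The plan is to argue by contraposition using the reducibility criterion of \autoref{rem: reducible condition}: it suffices to show that if $\lambda_1 > r$ then some value $m$ with $0 < m < n$ is simultaneously the total size of a subset of the columns of $\lambda$ and the total size of a subset of the columns of $\mu$, and that if $\lambda_1 = r$ and no such $m$ exists then $\lambda$ and $\mu$ are rectangles. Throughout I pass to the conjugate picture: write the column heights of $\lambda$ as $c_1 \geq \cdots \geq c_{\lambda_1}$ (the parts of $\lambda'$) and of $\mu$ as $d_1 \geq \cdots \geq d_{\mu_1}$, and set $P_j = c_1 + \cdots + c_j$ and $Q_k = d_1 + \cdots + d_k$, so that $P_0 = Q_0 = 0$ and $P_{\lambda_1} = Q_{\mu_1} = n := |\lambda| = |\mu|$. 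The column-subset-sums of $\lambda$ form a set $\Sigma_\lambda \supseteq \{P_0, \ldots, P_{\lambda_1}\}$, and likewise $\Sigma_\mu$. Since $\lambda,\mu \in \Par_r(n)$ have at most $r$ parts, every column height is at most $r$; in particular the gaps $d_{k+1} = Q_{k+1} - Q_k$ of the $\mu$-prefix sums are at most $r$.

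For the bound $\lambda_1 \leq r$, I would attach to each prefix sum $P_j$ its \emph{defect} $\delta_j := P_j - Q_{k(j)}$, where $k(j)$ is the largest index with $Q_{k(j)} \leq P_j$. Because consecutive $\mu$-prefix sums differ by at most $r$, each defect lies in $\{0,1,\dots,r-1\}$, and the endpoints satisfy $\delta_0 = \delta_{\lambda_1} = 0$. The key point is that if two indices $j < j'$ have equal defects, then $P_{j'} - P_j = Q_{k(j')} - Q_{k(j)}$; the left-hand side is the size of the block of columns $j+1,\dots,j'$ of $\lambda$ and the right-hand side the size of the block of columns $k(j)+1,\dots,k(j')$ of $\mu$, so this common value is a shared column-subset-size, and it is proper (strictly between $0$ and $n$) unless $(j,j') = (0,\lambda_1)$. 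Now if $\lambda_1 \geq r+1$ there are at least $r+2$ distinct prefix sums but only $r$ possible defects, with the two endpoints already occupying the class $0$; were the only coincidence the pair $(0,\lambda_1)$, the number of prefix sums could be at most $2 + (r-1) = r+1$, a contradiction. Hence some coincidence other than $(0,\lambda_1)$ occurs, producing a proper shared column-subset-size and, by \autoref{rem: reducible condition}, showing $(\lambda,\mu)$ reducible. Thus a Hilbert basis element has $\lambda_1 \leq r$.

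For the rectangularity claim when $\lambda_1 = r$, the pigeonhole above is exactly tight, so for a Hilbert basis element the $r+1$ defects $\delta_0,\dots,\delta_r$ must be distinct apart from $\delta_0 = \delta_r = 0$. I would first show $\lambda$ is a rectangle. If some column height strictly dropped, say $c_j > c_{j+1}$, then $v := P_{j-1} + c_{j+1}$ is a further element of $\Sigma_\lambda$ lying strictly between $P_{j-1}$ and $P_j$; adjoining $v$ gives $r+2$ column-subset-sums with defects in $\{0,\dots,r-1\}$, forcing $v$ to share a defect with some $P_i$ or with an endpoint, and one extracts a proper shared column-subset-size exactly as before, contradicting irreducibility. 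Hence all column heights of $\lambda$ are equal and $\lambda = (r^s)$ for some $s$, with $n = rs$. Now $\Sigma_\lambda = \{0,s,2s,\dots,rs\}$, so no interior multiple of $s$ is a $\mu$-column-subset-size; applying the pigeonhole modulo $s$ to $Q_0,\dots,Q_{\mu_1}$ (whose endpoints are both $\equiv 0$) yields $\mu_1 \leq s$. Combined with $\ell(\mu) \leq r$ this gives $rs = |\mu| \leq \mu_1 \cdot \ell(\mu) \leq s\cdot r$, forcing equality throughout, so $\mu_1 = s$, $\mu$ has exactly $r$ parts, and $\mu = (s^r)$ is the transpose rectangle.

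The main obstacle is the rectangularity step: verifying that every strict descent in the column heights of $\lambda$ really does inject a coincidence yielding a \emph{proper} and usable shared subset-size requires care, since in the borderline case where the auxiliary sum $v$ shares its defect with $P_j$ itself the naive difference $P_j - v = c_j - c_{j+1}$ need not be a column-subset-size, so one must use a better-chosen auxiliary sum or argue directly from $\ell(\mu) \geq n/\lambda_1$. A secondary point to pin down is the precise content of \autoref{rem: reducible condition}, namely that a common column-subset-\emph{size} always suffices for reducibility; I would justify this by choosing the two subsets to be prefixes of tallest columns and invoking a standard majorization (Hardy--Littlewood--P\'olya) argument for the concave truncations $t \mapsto \min(t,j)$, which converts the equal-size condition into the dominance of both pieces required for them to lie in $\Kostka_r$.
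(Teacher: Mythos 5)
This theorem is quoted from \cite[Theorem 1.4]{gao2021kostka} and is not proved in the paper, so there is no in-house argument to compare against; I can only evaluate your proof on its own terms, granting \autoref{rem: reducible condition} as the paper does. Your proof of the inequality $\lambda_1 \leq r$ is correct and complete: the defects $\delta_j = P_j - Q_{k(j)}$ do lie in $\{0,\dots,r-1\}$ because the gaps of the $Q$-sequence are column heights of $\mu$ and hence at most $r$; a repeated defect at $j<j'$ produces the consecutive column blocks $\{j+1,\dots,j'\}$ of $\lambda$ and $\{k(j)+1,\dots,k(j')\}$ of $\mu$ of common size $P_{j'}-P_j\in(0,n)$; and the count $\lambda_1+1\le r+1$ follows. (A small caveat: your proposed justification of \autoref{rem: reducible condition} via ``prefixes of tallest columns'' cannot work as stated, since a common subset-sum need not be realizable as a prefix sum of the sorted column heights; but the paper asserts that remark, so you may use it.)

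The rectangularity half has a genuine gap, precisely where you flag it. The column set of $v=P_{j-1}+c_{j+1}$ is $\{1,\dots,j-1,j+1\}$, and $|v-P_m|$ is a column-subset-size of $\lambda$ only when this set is nested with $\{1,\dots,m\}$, i.e.\ when $m\ne j$; in the remaining case $\delta(v)=\delta_j$ the quantity $P_j-v=c_j-c_{j+1}$ is a difference of column heights rather than a subset sum, and no contradiction follows. Nothing rules this case out a priori, and the fallback ``argue directly from $\ell(\mu)\ge n/\lambda_1$'' is not developed, so as written the claim that $\lambda$ is a rectangle is unproved. The gap is closable within your framework, but it requires a second pigeonhole layer: take $j$ to be the \emph{first} descent, so that $c_1=\cdots=c_j>c_{j+1}$, and use the whole family $v_i=P_{i-1}+c_{j+1}$ for $1\le i\le j$. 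A defect coincidence between $v_i$ and $v_{i'}$ yields the shared size $P_{i-1}-P_{i'-1}$, and one between $v_i$ and $P_m$ yields a shared size unless $i\le m\le j$; since the $\delta_m$ for $1\le m\le r-1$ are pairwise distinct for an irreducible element, injectivity forces $\delta(v_i)=\delta_i$ for every $i$, and the case $i=1$ is then absurd because (after first noting $d_1=\ell(\mu)=r$ from your own defect bound, and disposing of the degenerate case $c_1=r$ directly) both $v_1=c_{j+1}$ and $P_1=c_1$ lie strictly below $Q_1$, giving $\delta(v_1)=c_{j+1}\ne c_1=\delta_1$. Your subsequent treatment of $\mu$ once $\lambda=(r^s)$ is known --- the pigeonhole modulo $s$ giving $\mu_1\le s$, followed by $rs\le \mu_1\,\ell(\mu)\le sr$ --- is correct.
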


We now further study the initial entries of Hilbert basis elements of $\Kostka_r^\Z$, recalling the following definition.

\begin{defn}
We say that an integer pair $(\lambda_1,\mu_1)$ is \emph{$r$-initial} if there is an element $(\lambda,\mu)$ in the Hilbert basis of $\Kostka_r$ such that $\lambda$ has first element $\lambda_1$ and $\mu$ has first element $\mu_1$.
\end{defn}

By the dominating condition for $\lambda$ and $\mu$, an $r$-initial pair must satisfy $\lambda_1 \geq \mu_1$.  Moreover, note that if $(\lambda_1,\mu_1)$ is $r$-initial, then it is also $r'$-initial for any $r' > r$.  This is because any $(\lambda,\mu) \in \Kostka_r$ can be embedded in $\Kostka_{r'}$ by appending zeroes to $\lambda$ and $\mu$ (see \autoref{obs: kostka slice}), and this map preserves the Hilbert basis elements.

\begin{rem}\label{rem: initial immediate}
    It follows immediately from \autoref{thm: width bound} that
    \begin{itemize}
        \item if $(\lambda_1,\mu_1)$ is $r$-initial then $r \geq \lambda_1$, and 
        \item a pair $(r,\mu_1)$ is $r$-initial if and only if $r$ and $\mu_1$ are coprime.  
    \end{itemize}
\end{rem}

Thus, it remains to determine when $(\lambda_1,\mu_1)$ is $r$-initial for $r > \lambda_1$.  \autoref{prop: extremal rays} implies that the pair $(\lambda_1,\lambda_1)$ is $r$-initial for any $\lambda_1 < r$, as realized by the extremal rays.  It may seem tempting to expect that any pair $(\lambda_1,\mu_1)$ is $(\lambda_1 + 1)$-initial, but there is a counterexample when $\lambda_1 = 14$.  This is currently the only counterexample known to the author.

\begin{exmp}\label{exmp: initial counterexample}
We have checked computationally that $(14,6)$ is not $15$-initial.  Moreover, $r = 15$ is the smallest value such that there is a pair $(r-1,\mu_1)$ with $\mu_1 < r-1$ that is not $r$-initial.  
\end{exmp}

The main result of this section is \autoref{thm: initial main}, which states that a pair $(\lambda_1,\mu_1)$ is $(\lambda_1 + 1)$-initial if $\lambda_1 \geq \mu_1$ and any of the following conditions holds
\begin{itemize}
\item $\lambda_1$ and $\mu_1$ are coprime, or
\item $\lambda_1 + 1$ and $\mu_1$ are coprime, or
\item $\lambda_1 + 1$ and $\mu_1 + 1$ are coprime with $2\mu_1 \geq \lambda_1$.
\end{itemize}

\begin{cor}\label{cor: initial prob}
The probability that a pair of positive integers $\mu_1 < \lambda_1$ satisfies at least one of the conditions of \autoref{thm: initial main} is  
$$\frac{5}{2} \prod_{p\text{ prime}} \left(1 - \frac{1}{p^2}\right) - 2 \prod_{p\text{ prime}} \left(1 - \frac{2}{p^2}\right) + \frac{1}{2} \prod_{p\text{ prime}} \left(1 - \frac{3}{p^2}\right)  > 0.937293\,.$$
\end{cor}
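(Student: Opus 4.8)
The plan is to realize the stated quantity as a limiting lattice-point density and to compute it by inclusion--exclusion together with a prime-by-prime (Chinese Remainder) analysis. Fix a large $N$ and work over the triangular region $T_N = \{(\lambda_1,\mu_1) \in \Z^2 : 1 \le \mu_1 < \lambda_1 \le N\}$, so that $|T_N| = \binom{N}{2} \sim N^2/2$; the desired probability is the limit as $N \to \infty$ of the fraction of $T_N$ on which some condition of \autoref{thm: initial main} holds. Introduce the three coprimality events $A = \{\gcd(\lambda_1,\mu_1)=1\}$, $B = \{\gcd(\lambda_1+1,\mu_1)=1\}$, and $C = \{\gcd(\lambda_1+1,\mu_1+1)=1\}$, together with the geometric event $D = \{2\mu_1 \ge \lambda_1\}$. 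The three bullet conditions are then exactly $A$, $B$, and $C \cap D$, so the target is $\Prob(A \cup B \cup (C \cap D))$.

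Expanding by inclusion--exclusion gives
\begin{align*}
&\Prob(A) + \Prob(B) + \Prob(C \cap D) \\
&\quad - \Prob(A\cap B) - \Prob(A \cap C \cap D) - \Prob(B \cap C \cap D) + \Prob(A \cap B \cap C \cap D).
\end{align*}
I would evaluate each summand using equidistribution of $T_N$ in residue classes modulo any fixed squarefree $M$: the pairs $(\lambda_1 \bmod p,\, \mu_1 \bmod p)_{p \mid M}$ become asymptotically independent and uniform by the Chinese Remainder Theorem. At a prime $p$, event $A$ fails only on the residue pair $(0,0)$, event $B$ only on $(-1,0)$, and event $C$ only on $(-1,-1)$; these three pairs are distinct modulo every prime $p$ (including $p=2$). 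Hence the intersection of any $k$ of the three coprimality events holds at $p$ with local density $1 - k/p^2$, and multiplying over primes gives $\Prob(A) = \Prob(B) = \Prob(C) = Q_1 := \prod_p(1-1/p^2)$, each pairwise intersection equal to $Q_2 := \prod_p(1-2/p^2)$, and the triple intersection equal to $Q_3 := \prod_p(1-3/p^2)$.

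It remains to incorporate $D$. Since $2\mu_1 \ge \lambda_1$ cuts the triangle $\{\mu_1 < \lambda_1\}$ into two pieces of equal area, and lattice points equidistribute in residues over each piece separately, $D$ is asymptotically independent of the arithmetic events with $\Prob(D) = \tfrac12$. Thus $\Prob(C \cap D) = \tfrac12 Q_1$, $\Prob(A \cap C \cap D) = \Prob(B \cap C \cap D) = \tfrac12 Q_2$, and $\Prob(A \cap B \cap C \cap D) = \tfrac12 Q_3$. Substituting into the inclusion--exclusion sum yields $\tfrac52 Q_1 - 2 Q_2 + \tfrac12 Q_3$, matching the claimed expression; the numerical bound $> 0.937293$ then follows by evaluating the three Euler products (with $Q_1 = 6/\pi^2$ and $Q_2, Q_3$ computed to sufficient precision).

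The main obstacle is making the two independence statements rigorous and justifying the interchange of the limit $N \to \infty$ with the infinite product over primes. Both are standard but require care: the coprimality densities are handled by the usual M\"obius-inversion estimate, where the contribution of primes $p > N^{\varepsilon}$ is negligible so that the tail of the Euler product is controlled; and the factorization of $D$ needs an equidistribution estimate for lattice points lying in each of the two half-triangles within a fixed residue class modulo $M$. I expect these analytic justifications, rather than the combinatorial inclusion--exclusion bookkeeping, to be the delicate part of the argument.
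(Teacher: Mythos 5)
Your proposal is correct and follows essentially the same route as the paper's appendix: inclusion--exclusion over the three conditions, local densities $1-k/p^2$ at each prime via the Chinese Remainder Theorem (using that the bad residue pairs $(0,0)$, $(-1,0)$, $(-1,-1)$ are distinct modulo every prime), an independent factor of $\tfrac12$ for the half-triangle condition $2\mu_1 \geq \lambda_1$, and a tail estimate $\sum_{p>B}|I|/p^2 \leq |I|/B$ to justify passing to the full Euler product. The only difference is cosmetic bookkeeping (explicit events $A$, $B$, $C\cap D$ versus the paper's $d(N,B,I)$ notation).
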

\begin{proof}
The details of this computation are given in the appendix.
\end{proof}

\begin{exmp}
The pairs $(\lambda_1, \mu_1)$ with $\mu_1 < \lambda_1 \leq 30$ for which the conditions of \autoref{thm: initial main} do not hold are $(14, 6)$, $(15, 6)$, $(20, 6)$, $(20, 14)$, $(21, 6)$, $(24, 10)$, $(25, 10)$, $(26, 6)$, $(26, 12)$, $(27, 6)$, $(27, 12)$, and $(27, 21)$.  
\end{exmp}

\begin{thm}\label{thm:: gcd1}
Fix $\lambda_1 > \mu_1$.  Let 
$$r(\lambda_1,\mu_1) = \min\{z \in \mathbb{N} : z \geq \lambda_1,\; \gcd(z,\mu_1) = 1\}\,.$$ 
Then $(\lambda_1, \mu_1)$ is $r(\lambda_1,\mu_1)$-initial. In particular, $(\lambda_1,\mu_1)$ is $(\lambda_1 + \mu_1-1)$-initial.
\end{thm}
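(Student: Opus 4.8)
The plan is to reduce the ``in particular'' clause to the main claim and then exhibit an explicit Hilbert basis element. For the reduction, note that among the $\mu_1$ consecutive integers $\lambda_1,\lambda_1+1,\dots,\lambda_1+\mu_1-1$ the one congruent to $1\pmod{\mu_1}$ is coprime to $\mu_1$, so $r(\lambda_1,\mu_1)\le \lambda_1+\mu_1-1$; since $r$-initiality implies $r'$-initiality for every $r'\ge r$ (by appending zeros, as noted before \autoref{rem: initial immediate}), it suffices to prove that $(\lambda_1,\mu_1)$ is $r$-initial for $r=r(\lambda_1,\mu_1)$. Write $r=r(\lambda_1,\mu_1)$, so $\gcd(r,\mu_1)=1$ and $r\ge \lambda_1>\mu_1$. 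When $\mu_1=1$ the pair is realized by the extremal ray labeled $(\lambda_1,1,0)$ of \autoref{prop: extremal rays} (whose primitive generator $(\lambda_1;1^{\lambda_1})$ is irreducible), so assume $\mu_1\ge 2$ and set $\rho=r \bmod \mu_1\in\{1,\dots,\mu_1-1\}$, which is coprime to $\mu_1$.

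The construction I propose is to take
\[\lambda=(\lambda_1^{\mu_1})=(\underbrace{\lambda_1,\dots,\lambda_1}_{\mu_1}),\]
whose conjugate consists of $\lambda_1$ columns each of height $\mu_1$, and to take $\mu$ to be the partition with exactly $\mu_1$ columns whose heights all lie in the residue class $\rho\pmod{\mu_1}$, are at most $r$, and sum to $\lambda_1\mu_1$. Concretely, writing each column height as $\rho+j_i\mu_1$ with $0\le j_i\le (r-\rho)/\mu_1$, one needs $\sum_{i=1}^{\mu_1}j_i=\lambda_1-\rho$; this is solvable because $0\le \lambda_1-\rho\le r-\rho=\mu_1\cdot\frac{r-\rho}{\mu_1}$, the two inequalities being exactly $\rho<\mu_1<\lambda_1$ and $\lambda_1\le r$. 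Choosing the $j_i$ to take at most two consecutive values keeps $\mu$ to at most two distinct column heights, which is what places the resulting point on a $2$-face (relevant to the refinement in \autoref{thm: initial main}, though not needed here).

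It then remains to check four things, the first three routine. Both partitions have $n=\lambda_1\mu_1$ boxes and at most $r$ parts; $\lambda$ has largest part $\lambda_1$ and $\mu$ has exactly $\mu_1$ columns, hence largest part $\mu_1$, so the initial entries are $(\lambda_1,\mu_1)$ as required. Dominance $\lambda\ge_{\sf Dom}\mu$ is immediate: every part of $\mu$ is at most $\mu_1<\lambda_1$, so $\sum_{i\le k}\lambda_i=k\lambda_1\ge k\mu_1\ge \sum_{i\le k}\mu_i$ for $k\le \mu_1$, while for $k>\mu_1$ the left side already equals $n$. Thus $(\lambda,\mu)\in\Kostka_r\cap\Z^{2r}$.

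The crux is irreducibility, which I will verify through the columns criterion of \autoref{rem: reducible condition}. The columns of $\lambda$ all have height $\mu_1$, so every subset of them has total size a multiple of $\mu_1$; in particular every nonempty proper subset has size in $\{\mu_1,2\mu_1,\dots,(\lambda_1-1)\mu_1\}$. On the other side, every column of $\mu$ is $\equiv\rho\pmod{\mu_1}$, so a subset of $j$ columns of $\mu$ has total size $\equiv j\rho\pmod{\mu_1}$; since $\gcd(\rho,\mu_1)=1$, this size is a multiple of $\mu_1$ only when $\mu_1\mid j$, i.e.\ only for the empty subset (size $0$) and the full subset (size $n$). Hence no nonempty proper subset of the columns of $\lambda$ can have total size equal to that of any subset of the columns of $\mu$, so $(\lambda,\mu)$ is irreducible and therefore a Hilbert basis element of $\Kostka_r$ with initial entries $(\lambda_1,\mu_1)$. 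The one place the hypothesis is genuinely used is here, in the coprimality $\gcd(\rho,\mu_1)=\gcd(r,\mu_1)=1$; this is exactly why $r$ must be taken coprime to $\mu_1$, and it is consistent with the failure recorded in \autoref{exmp: initial counterexample}, where the construction is silent because no coprime value of $r$ is available below the threshold. The main obstacle is thus not a delicate estimate but the identification of this construction---in particular the realization that one should make $\lambda$ a $\lambda_1\times\mu_1$ rectangle and push all of the arithmetic into the residues modulo $\mu_1$ of the column heights of $\mu$.
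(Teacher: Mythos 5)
Your proof is correct and follows essentially the same route as the paper: you take the same rectangular $\lambda=(\lambda_1^{\mu_1})$ and argue irreducibility by noting that subsets of columns of $\lambda$ have size divisible by $\mu_1$ while nonempty proper subsets of the columns of $\mu$ (all congruent to $r$ mod $\mu_1$, with $\gcd(r,\mu_1)=1$) never do. The paper's specific $\mu=(\mu_1^{\,r-\mu_1},(\mu_1-(r-\lambda_1))^{\mu_1})$ is exactly one instance of your family with two consecutive values of $j_i$, so the arguments coincide.
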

\begin{proof}
Let $r = r(\lambda_1,\mu_1)$.  Since some entry among the $\mu_1$ integers $\lambda_1, \dots, \lambda_1 + \mu_1 - 1$ must be equivalent to $1$ modulo $\mu_1$, we have $r \leq \lambda_1 + \mu_1 - 1$.

Let $\lambda$ and $\mu$ be the partitions $$\lambda = (\underbrace{\lambda_1,\dots,\lambda_1}_{\mu_1}) \quad\text{ and }\quad \mu = (\underbrace{\mu_1,\dots,\mu_1}_{r-\mu_1}, \underbrace{\mu_1 - (r - \lambda_1),\dots,\mu_1 - (r-\lambda_1)}_{\mu_1})\,.$$ 

If $(\lambda,\mu)$ were reducible, then, by \autoref{rem: reducible condition}, we could choose a proper subset of the columns of $\lambda$ with the same size as a proper subset of the columns of $\mu$.   The $\mu_1$ columns of $\mu$ are all equivalent to $r$ modulo $\mu_1$, and $\gcd(r,\mu_1) = 1$, and hence  there is no way to choose a proper subset of the columns of $\mu$ such that their size is divisible by $\mu_1$.  However, any subset of the columns of $\lambda$ is divisible by $\mu_1$.  Therefore, $(\lambda,\mu)$ is irreducible in $\Kostka_r$ and hence is in the Hilbert basis.  
\end{proof}

\begin{exmp}\label{exmp: gcd1}
Let $\lambda_1 = 20$ and $\mu_1 = 15$.  Since $\gcd(20,15) = 5$, $\gcd(21,15) = 3$, and $\gcd(22,15) = 1$, we have $r(\lambda_1,\mu_1) = 22$.  

The construction in the proof of \autoref{thm:: gcd1} yields the Hilbert basis element $(\lambda,\mu) \in \Kostka_{22}$, where
$$ \lambda = (\underbrace{20,\dots,20}_{15}) \text{ and } \mu = (\underbrace{15,\dots,15}_{7},\underbrace{13,\dots,13}_{15})\,.$$
Thus the pair  $(20,15)$ is $22$-initial.
\end{exmp}

The second sufficient condition of \autoref{thm: initial main} follows immediately from \autoref{thm:: gcd1}, as in this case we have $r(\lambda_1,\mu_1) \leq \lambda_1 + 1$.  We can now construct another family of examples to account for the last case of \autoref{thm: initial main}.

\begin{thm}\label{thm:: gcd2}
    Suppose $\gcd(\lambda_1 + 1, \mu_1 + 1) = 1$ and $2\mu_1 > \lambda_1+ 1$.  Then the pair $(\lambda_1,\mu_1)$ is $(\lambda_1 + 1)$-initial.
\end{thm}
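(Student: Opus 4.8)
The plan is to exhibit one explicit Hilbert basis element of $\Kostka_r$, where $r=\lambda_1+1$, whose initial entries are exactly $(\lambda_1,\mu_1)$, and then to read off that it lies on a $2$-face. Writing $\delta=\lambda_1-\mu_1\geq 1$, I would take
\[
\mu=(\underbrace{\mu_1,\dots,\mu_1}_{\lambda_1+1}),
\qquad
\lambda=(\underbrace{\lambda_1,\dots,\lambda_1}_{2\mu_1-\lambda_1+1},\ \underbrace{\lambda_1-1,\dots,\lambda_1-1}_{\delta}).
\]
The hypothesis $2\mu_1>\lambda_1+1$ makes the first block of $\lambda$ nonempty, a short computation gives $|\lambda|=|\mu|=\mu_1(\lambda_1+1)$, and $\lambda$ has $\mu_1+1\leq r$ parts while $\mu$ has $r$ parts. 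First I would check $(\lambda,\mu)\in\Kostka_r$ by verifying $\lambda\geq_{\sf Dom}\mu$: comparing partial sums blockwise, the only nontrivial range is $2\mu_1-\lambda_1+1< k\leq \mu_1+1$, where $\sum_{j\leq k}\lambda_j-\sum_{j\leq k}\mu_j=k(\delta-1)+(2\mu_1-\lambda_1+1)\geq 0$; elsewhere the inequality is immediate. This step is routine.

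The heart of the argument is irreducibility, which I would establish through the column subset-sum criterion of \autoref{rem: reducible condition}. Passing to conjugates, the columns of $\mu$ are $\mu_1$ copies of the height $r=\lambda_1+1$, so every sub-multiset of $\mu$'s columns has total size a multiple of $r$; the columns of $\lambda$ are $\lambda_1-1$ copies of $\mu_1+1$ together with one column of height $2\mu_1-\lambda_1+1$, all lying in $[1,r-1]$. Hence $(\lambda,\mu)$ is reducible if and only if some proper nonempty sub-multiset of $\lambda$'s columns has total size divisible by $r$. Such a sub-multiset has size $z(\mu_1+1)+\varepsilon(2\mu_1-\lambda_1+1)$ with $0\leq z\leq \lambda_1-1$ and $\varepsilon\in\{0,1\}$, and I would show this is $\equiv 0\pmod r$ only in the empty and full cases. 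Since $\gcd(\mu_1+1,\lambda_1+1)=1$, the element $\mu_1+1$ is invertible mod $r$, so each value of $\varepsilon$ pins $z$ to a single residue: $\varepsilon=0$ forces $z\equiv 0$, while $\varepsilon=1$ gives $z(\mu_1+1)\equiv -(2\mu_1-\lambda_1+1)\equiv -2(\mu_1+1)\pmod r$ (because $2\mu_1-\lambda_1+1\equiv 2(\mu_1+1)\pmod r$), hence $z\equiv -2\equiv \lambda_1-1\pmod r$. As $z$ ranges over $[0,\lambda_1-1]=[0,r-2]$, these unique solutions are precisely the two endpoints, corresponding to the empty and full sub-multisets. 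This is the one place where the coprimality is essential, and I expect it to be the main obstacle in the sense that it is the only point where genuine number theory enters.

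Finally, to obtain the stronger claim that $(\lambda,\mu)$ sits on a $2$-face of $\Kostka_r$, I would count the facet hyperplanes of \autoref{obs: bounding hyperplanes} through the point. Since $\mu$ is a rectangle it lies on all $r-1$ hyperplanes $\widehat H_i$; since $\lambda$ has exactly two descents (at positions $2\mu_1-\lambda_1+1$ and $\mu_1+1$) and vanishes in its last coordinate, it lies on $r-2$ of the $H_i$ (including $H_r$); and because $\lambda\geq_{\sf Dom}\mu$ is strict in every initial coordinate, no $J_i$ is tight. These $2r-3$ facet equalities cut the $(2r-2)$-dimensional polytope $\Kpoly_r$ down to a $1$-face, i.e.\ a $2$-face of the cone, once their independence is confirmed. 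The domination check, the conjugation bookkeeping, and this facet-independence verification are all mechanical, so the congruence analysis of the previous paragraph is the only delicate ingredient.
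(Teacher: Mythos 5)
Your proposal is correct and uses the same construction and the same column-size irreducibility criterion as the paper; the only cosmetic difference is that the paper disposes of the exceptional short column by passing to complements, whereas you handle both cases $\varepsilon=0,1$ by a direct congruence computation modulo $\lambda_1+1$. The $2$-face observation you append corresponds to the paper's separate Lemma~\ref{lem: two parts face} and Corollary~\ref{cor: constructions 2d}, again by the same facet-counting argument.
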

\begin{proof}
    Let 
    $$\lambda = (\underbrace{\lambda_1,\dots,\lambda_1}_{2\mu_1 - \lambda_1 + 1}, \underbrace{\lambda_1-1,\dots,\lambda_1-1}_{\lambda_1 - \mu_1})$$
    and 
    $$\mu = (\underbrace{\mu_1,\dots,\mu_1}_{\lambda_1 + 1})\,.$$
    It is straightforward to check that  $\lambda$ dominates $\mu$, so $(\lambda,\mu)$ is in $\Kostka_{\lambda_1 + 1}$.  Observe that all but one of the columns of $\lambda$ have size $\mu_1 + 1$, while the last column has  size $2\mu_1 - \lambda_1 + 1$.  The columns of $\mu$ all have size $\lambda_1 + 1$.  

    By \autoref{rem: reducible condition}, if $(\lambda,\mu)$ is reducible, then we can choose a proper subset of the columns of $\lambda$ with the same size as a proper subset of the columns of $\mu$. If such a choice exists, note that the complement of the chosen columns also satisfies this property.  Thus, we can choose a subset of the columns of $\lambda$ excluding the smallest column of size equal to some subset of columns of $\mu$.  Note that the total size of any collection of columns of $\mu$ is divisible by $\lambda_1 + 1$.  Since we assume $\mu_1 + 1$ is coprime to $\lambda_1 + 1$, then a collection of at most $\lambda_1 - 1$ columns of size $\mu_1 + 1$ will not be divisible by $\lambda_1 + 1$. Therefore, no such set of columns exist.  We can conclude $(\lambda,\mu)$ is irreducible and hence is in the Hilbert basis of $\Kostka_{\lambda_1 + 1}$.
\end{proof}

\begin{exmp}
As in \autoref{exmp: gcd1}, we consider $\lambda_1 = 20$ and $\mu_1 = 15$.  Since $21$ and $16$ are coprime, \autoref{thm:: gcd2} applies to the pair $(\lambda_1,\mu_1)$.  The construction in the proof yields the Hilbert basis element $(\lambda,\mu) \in \Kostka_{21}$, where
$$ \lambda = (\underbrace{20,\dots,20}_{11}, \underbrace{19,\dots,19}_{5}) \text{ and } \mu = (\underbrace{15,\dots,15}_{21})\,.$$
Thus the pair  $(20,15)$ is $21$-initial, which is stronger than the statement yielded in \autoref{exmp: gcd1}.
\end{exmp}

Lastly, we show that the Hilbert basis elements we constructed lie on the $2$-skeleton of the Kostka cone by examining elements consisting of few distinct entries in $\Kostka_r$.

\begin{lem}\label{lem: two parts face}
If $\lambda,\mu \in \Par_r(n)$ are partitions satisfying $\lambda \geq_{\sf Dom}  \mu$ and that one is rectangular while the other has exactly two part sizes, then the point $(\lambda,\mu)$ lies on a $2$-dimensional face of $\Kostka_r$.
\end{lem}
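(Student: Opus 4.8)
The plan is to bound the dimension of the minimal face of $\Kostka_r$ containing the point $x=(\lambda,\mu)$, namely the intersection of $\Kostka_r$ with every facet hyperplane of \autoref{obs: bounding hyperplanes} that passes through $x$. Each such hyperplane is linear (homogeneous), so this minimal face lies inside the linear subspace $V$ they cut out, and it suffices to show $\dim V \le 2$. Once this is done, $x$ lies on a face of dimension at most $2$; since the face lattice of a pointed cone is graded, any face of dimension $1$ or $2$ is contained in a $2$-dimensional face, which gives the claim. The case of a $1$-dimensional minimal face genuinely occurs, since the extremal rays of \autoref{prop: extremal rays} already have one rectangular and one two-part-size partition, so this final extension step is needed.

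To describe $V$, I would record which facets contain $x$. Writing $(\lambda',\mu')$ for a general point of the minimal face, the hyperplanes $H_i$ and $\widehat H_i$ force $\lambda'$ to be constant on each maximal block of equal consecutive entries of $\lambda$ and $\mu'$ to be constant on each such block of $\mu$. Whichever of $\lambda,\mu$ is the rectangle contributes a single positive block, and whichever has exactly two part sizes contributes two positive blocks, for a total of three free block values. The trailing-zero block of $\lambda'$ (if $\lambda$ has one) is pinned to $0$ by the facet $H_r$ together with the chain of $H_i$'s. Finally, the ambient relation $\sum_i\lambda'_i=\sum_i\mu'_i$, satisfied throughout $\Kostka_r$, imposes one linear equation on these values.

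The one genuinely non-formal step is to pin the trailing-zero block of $\mu'$ to $0$, for which there is no facet of the form $\{\mu_r=0\}$ --- indeed \autoref{rem: num facets} lists $3r-2$ facets with no $\widehat H_r$. Here I would invoke the standard fact that $\lambda \geq_{\sf Dom}\mu$ forces $\lambda$ to have at most as many positive parts as $\mu$: writing $m$ for the number of positive parts of $\mu$, the dominance inequality at index $m$ together with equal total sums shows $\lambda$ has no parts beyond index $m$. Consequently $\sum_{j\le m}\lambda_j=\sum_{j}\lambda_j=\sum_k\mu_k=\sum_{k\le m}\mu_k$, so the facet $J_m$ passes through $x$ (when $m<r$; when $m=r$ there is no zero block to treat). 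Intersecting $V$ with $J_m$ gives $\sum_{j\le m}\lambda'_j=\sum_{k\le m}\mu'_k$; the left side equals the full sum of $\lambda'$ because $\lambda'$ is supported on its first $m$ coordinates, and subtracting the equal-sum relation forces $(r-m)z'=0$, where $z'$ is the trailing-zero value of $\mu'$. Hence $z'=0$.

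After these reductions, $V$ is parametrized by the three surviving positive block values subject to the single equal-sum equation, so $\dim V\le 2$; therefore $x$ lies on a face of $\Kostka_r$ of dimension at most $2$, and hence on a $2$-dimensional face. The main obstacle is concentrated entirely in the $J_m$ step for the zero block of $\mu$: the asymmetry of the facet description (the presence of $H_r$ but absence of $\widehat H_r$) means dominance must be used to locate the correct equality facet, whereas the remaining block-constancy bookkeeping is routine.
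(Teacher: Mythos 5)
Your proof is correct and follows essentially the same approach as the paper: identify the facet hyperplanes of \autoref{obs: bounding hyperplanes} passing through $(\lambda,\mu)$ and bound the dimension of their intersection by counting the surviving block parameters against the equal-sum relation. The only difference is in handling the trailing zeros of $\mu$: the paper invokes \autoref{obs: kostka slice} to reduce to the case $\mu_r>0$ (so that only the $H_i$ and $\widehat H_i$ are needed), whereas you stay in $\Kostka_r$ and pin the zero block of $\mu'$ via the equality facet $J_m$, using the dominance argument that $\lambda$ has at most as many positive parts as $\mu$ --- a valid, slightly more hands-on substitute for the same reduction.
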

\begin{proof}
    By \autoref{obs: kostka slice}, we can assume that the length of $\mu$ is $r$.  Suppose 
    $$\lambda = (\underbrace{x,\dots,x}_s) \text{ and }\mu = (\underbrace{y,\dots,y}_t,\underbrace{z,\dots,z}_{r-t})\,.$$
    Hence we have
    $$(\lambda,\mu) \in \left(\bigcap_{\substack{1 \leq i \leq r\\i \neq s}}H_i \right) \cap \left(\bigcap_{\substack{1 \leq j < r\\j \neq t}} \widehat H_j\right)\,.$$
    Thus the point $(\lambda,\mu)$ lies in the $2$-dimensional intersection of these $2r-3$ hyperplanes with the $(2r-1)$-dimensional cone $\Kostka_r$, and hence is a $2$-face of $\Kostka_r$.

    An analogous argument shows that if $\lambda$ is rectangular and $\mu$ has $k$ part sizes, then $(\lambda,\mu)$ lies on a $k$-dimensional face of $\Kostka_r$.
\end{proof}

Since the Hilbert basis elements we constructed satisfy the hypotheses of \autoref{lem: two parts face}, we can conclude the following.

\begin{cor}\label{cor: constructions 2d}
The $(\lambda,\mu)$ constructed in the proofs of \autoref{thm:: gcd1} and \autoref{thm:: gcd2} lie on a two-dimensional face of their respective Kostka cones.
\end{cor}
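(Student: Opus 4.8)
The plan is to verify directly that each of the two constructed Hilbert basis elements falls under the hypotheses of \autoref{lem: two parts face}, which asserts that whenever one of $\lambda,\mu$ is rectangular and the other has exactly two part sizes, the point $(\lambda,\mu)$ lies on a $2$-dimensional face of $\Kostka_r$. Since \autoref{lem: two parts face} does all the structural work, this corollary is purely a matter of inspecting the shapes of the partitions produced in the two earlier theorems.

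First I would recall the construction in the proof of \autoref{thm:: gcd1}. There we set
$$\lambda = (\underbrace{\lambda_1,\dots,\lambda_1}_{\mu_1}) \quad\text{ and }\quad \mu = (\underbrace{\mu_1,\dots,\mu_1}_{r-\mu_1}, \underbrace{\mu_1 - (r - \lambda_1),\dots,\mu_1 - (r-\lambda_1)}_{\mu_1})\,.$$
Here $\lambda$ is rectangular, consisting of a single repeated part $\lambda_1$, while $\mu$ has exactly the two distinct part sizes $\mu_1$ and $\mu_1 - (r - \lambda_1)$ (these are distinct precisely because $r > \lambda_1$). Hence the hypotheses of \autoref{lem: two parts face} are met, with the roles being that $\lambda$ is the rectangle and $\mu$ is the partition with two part sizes, so $(\lambda,\mu)$ lies on a $2$-face of $\Kostka_{r}$.

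Next I would examine the construction from the proof of \autoref{thm:: gcd2}, where
$$\lambda = (\underbrace{\lambda_1,\dots,\lambda_1}_{2\mu_1 - \lambda_1 + 1}, \underbrace{\lambda_1-1,\dots,\lambda_1-1}_{\lambda_1 - \mu_1}) \quad\text{ and }\quad \mu = (\underbrace{\mu_1,\dots,\mu_1}_{\lambda_1 + 1})\,.$$
Now the situation is reversed: $\mu$ is rectangular with the single part $\mu_1$, while $\lambda$ has exactly the two distinct part sizes $\lambda_1$ and $\lambda_1 - 1$. Again \autoref{lem: two parts face} applies directly, yielding that $(\lambda,\mu)$ lies on a $2$-dimensional face of $\Kostka_{\lambda_1+1}$. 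I do not anticipate any genuine obstacle here, as the entire argument is a shape-checking verification; the only point requiring a moment's care is confirming that the two part sizes in each construction are genuinely distinct (which follows from $r > \lambda_1$ in the first case and is immediate in the second), so that we have \emph{exactly} two part sizes rather than one.
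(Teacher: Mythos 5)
Your proof is correct and takes essentially the same route as the paper, which likewise deduces the corollary in one line by observing that both constructions satisfy the hypotheses of \autoref{lem: two parts face} (with $\lambda$ rectangular and $\mu$ having two part sizes in the first case, and the roles reversed in the second). The only caveat is that in the construction of \autoref{thm:: gcd1} one can have $r(\lambda_1,\mu_1)=\lambda_1$ (when $\gcd(\lambda_1,\mu_1)=1$), in which case $\mu$ degenerates to a rectangle and $(\lambda,\mu)$ is an extremal ray rather than a genuinely two-parted example, but it then still lies on a $2$-face, so the conclusion is unaffected.
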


We can now combine these results to prove the main result.

\begin{proof}[Proof of \autoref{thm: initial main}]
The first sufficient condition follows from the Width Bound of Gao-Kiers-Orelowitz-Yong (\autoref{thm: width bound}) and the fact that if a pair is $r$-initial, then it is $r'$-initial for any $r' > r$.  The second and third sufficient conditions follow from \autoref{thm:: gcd1} and \autoref{thm:: gcd2}, respectively.  The final claim is a result of \autoref{cor: constructions 2d} and the fact that the Hilbert basis elements in \autoref{thm: width bound} are primitive vectors of extremal rays.
\end{proof}

\section{Further Directions}\label{sec: further directions}
We start by discussing a curious phenomenon in the $h$-vector of the $r$-Kostka polytope, namely, that half of the entries appear to take the value $1$. The \emph{$h$-vector} $(h_0,h_1,\dots,h_{d})$ of a $d$-polytope is defined from the \emph{$f$-vector} $(f_{-1},f_0,\dots,f_{d-1})$, where $f_k$ is the number of $k$-faces, by 
$$h_k = \sum_{i=0}^k (-1)^{k-i} \binom{d-i}{k-i} f_{i-1}\,.$$  
While $h$-vectors are usually studied in the case that the polytope is simple (or, dually, simplicial), recent work of Gaetz has shown that they can still have nice positivity properties in certain non-simple cases \cite{gaetz2023one}.  Though $\Kpoly_r$ is not simple and its $h$-vector can have negative entries, half of its $h$-vector still seems well-behaved.

\begin{conj}
    Let $(h_0,h_1,\dots,h_{2r-2})$ be the $h$-vector of $\Kpoly_r$.  Then $h_k = 1$ whenever $r - 1 \leq k \leq 2r - 2$.
\end{conj}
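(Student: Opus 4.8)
The conjecture asserts a structural property of the $h$-vector of $\Kpoly_r$, and the most promising approach is to extract it from the face-counting machinery already developed. The plan is to use the polynomiality and positivity structure of the $f_d(r)$ together with the explicit formula $h_k = \sum_{i=0}^k (-1)^{k-i}\binom{d-i}{k-i} f_{i-1}$, where here $d = 2r-2$. First I would observe that the conjectured statement $h_k = 1$ for $r-1 \leq k \leq 2r-2$ is a statement about the top half of the $h$-vector, which is precisely the range governed by the Dehn-Sommerville-type symmetries when they apply. Since $\Kpoly_r$ is not simple, the classical Dehn-Sommerville relations fail in general, so the key is to find the correct substitute. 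I would try to show that the relevant alternating sums telescope to $1$ by exploiting that, by \autoref{thm: leading coeff}, the top coefficient $\alpha_{3d+3}$ of each $f_d(r)$ is exactly $1$, and more generally that the $f_d(r)$ have a rigid leading-term structure.

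The crucial observation to pursue is that $h_{2r-2}$ should equal the reduced Euler characteristic contribution, and $h_{2r-2}=1$ would follow from the Euler relation $\sum_{i=-1}^{2r-2}(-1)^i f_i = 0$ for the boundary sphere. The harder entries are $h_{r-1},\dots,h_{2r-3}$. For these I would attempt an inductive argument on $r$ using \autoref{obs: kostka slice}: since $\Kostka_r$ sits inside $\Kostka_{r+1}$ as the codimension-$2$ face $\mu_r = 0$, there should be a recursive relationship between the face numbers $f_d(r)$ and $f_d(r+1)$ that propagates through to the $h$-vector. Concretely, I would compute $h_k(\Kpoly_{r+1}) - h_k(\Kpoly_r)$ using the recursion for $f_d$ and the binomial identities $\binom{d-i}{k-i}$ across dimensions $d = 2r-2$ and $d = 2r$, aiming to show the difference vanishes in the prescribed range, which together with a base case would close the induction.

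The main obstacle I anticipate is that the $h$-vector formula couples a face number $f_{i-1}$ for dimension $d = 2r-2$ with the ambient dimension $d$ appearing in the binomial coefficient, so when passing from $r$ to $r+1$ both the summation range and the binomial weights shift; controlling this shift requires understanding the $f_d(r)$ not just asymptotically but with their full $\binom{r}{k}$-expansion. The positive integer expansion from \autoref{cor: face polynomial degree} is encouraging, but I do not expect a clean closed form for general $d$, so the argument will likely need to isolate exactly which coefficients survive into the top half of the $h$-vector. A realistic fallback is to prove the two endpoint cases $h_{2r-2}=1$ (Euler relation) and $h_{r-1}=1$ directly, and to reduce the intermediate entries to a combinatorial identity among the $\alpha_k$; if that identity resists a general proof, one could at least verify it symbolically for all $r$ once the generating-function form of $f_d(r)$ is pinned down, though a fully uniform proof is what the conjecture ultimately demands.
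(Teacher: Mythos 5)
This statement is a \emph{conjecture} in the paper, not a theorem: the author offers no proof, only computational verification for $r \leq 7$, and explicitly lists it among open further directions. So there is no ``paper's own proof'' to compare against, and your proposal should be judged on whether it actually closes the problem. It does not. What you have written is a research plan with the hard step left open: you correctly identify that $h_{2r-2}=1$ follows for free from the Euler relation (indeed $h_d=1$ holds for every $d$-polytope, so that endpoint carries no content specific to $\Kpoly_r$), but the entries $h_{r-1},\dots,h_{2r-3}$ are the entire substance of the conjecture, and for those you offer only the hope that an induction on $r$ via \autoref{obs: kostka slice} will make the differences vanish, with a fallback of symbolic verification. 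That fallback is not a proof, and the induction as sketched cannot be executed with the information the paper provides: the $h_k$ in the range $r-1 \leq k \leq 2r-3$ are alternating sums involving \emph{all} face numbers $f_{-1},\dots,f_{k-1}$, whereas the paper pins down $f_d(r)$ exactly only for $d \leq 3$ and otherwise gives only the leading coefficient $\alpha_{3d+3}=1$ (\autoref{thm: leading coeff}) and positivity of the $\binom{r}{k}$-expansion (\autoref{cor: face polynomial degree}). The leading term of $f_d(r)$ is degree $3d+3$ in $r$, so for $k$ near $r$ the binomial weights $\binom{2r-2-i}{k-i}$ multiply face numbers of wildly different magnitudes, and massive cancellation must occur to leave exactly $1$; nothing in your outline controls the subleading coefficients that govern this cancellation.

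A further warning about the direction you lean on: the displayed $h$-vectors (e.g.\ $(1,8,-3,1,1,1,1)$ for $r=4$) have negative entries and are visibly asymmetric, so no Dehn--Sommerville-type symmetry or nonnegativity substitute is available; any argument must engage directly with the non-simple combinatorics of $\Kpoly_r$. If you want to make genuine progress, the most promising concrete sub-goal consistent with the paper's toolkit is to prove $h_{2r-3}=1$ and $h_{2r-4}=1$ by hand, since these involve only $f_{2r-2},f_{2r-3},f_{2r-4}$, i.e.\ the facets and ridges, which are accessible from \autoref{obs: bounding hyperplanes} and \autoref{rem: num facets}; that would at least convert part of the conjecture into a theorem rather than a plan.
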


We have verified that the conjecture holds for all $r \leq 7$.  The only other instance we know of this phenomenon was observed by Charles Wang \cite{wangnumber} in studying the unordered partition polytope, which is the convex hull of the points $x \in \Z_{\geq 0}^n$ such that $(1,2,\dots,n) \cdot x = n$.  The facets of these polytopes were previously studied by Shlyk \cite{SHLYK20051139}.  It turns out that each unordered partition polytope is combinatorially equivalent to a face of some Kostka polytope.  It would be interesting to have an explanation for this phenomenon in either family of polytopes.  See \cite[Chapter 2]{stanley1996CCA} or \cite[Chapter 8]{ziegler2012lectures} for more details on $f$- and $h$-vectors.

\begin{exmp}
The $h$-vectors of $\Kpoly_r$ for $2 \leq r \leq 7$ are given by 
$(1,1,1)$, $(1,3,1,1,1)$, $(1,8,-3,1,1,1,1)$, $(1,17,-15,5,1,1,1,1,1)$, $(1,31,-36,13,1,1,1,1,1,1,1)$, and \\$(1,51,-60,2,25,-7,1,1,1,1,1,1,1)$.
\end{exmp}

Another avenue for potential progress is furthering the understanding of the face numbers of the Kostka polytope.  As we determined in \autoref{sec: edge description} and \autoref{sec: enumeration}, the function $f_{d}(r)$ counting the number of $d$-faces of the $r$-Kostka polytope is a polynomial of degree $3d+3$.  A more extensive computer calculation would allow one to determine this function for $d > 3$ via \autoref{thm: face recursion formula}. We have also shown that $f_{d}(r)$ has a positive integer expansion in the basis $\left\{ \binom{r}{i}\right\}_{i \geq 1}$.  It may be possible to explicitly express some integer coefficients in this expansion for arbitrary $d$ using an analogue of our methods for calculating the top degree coefficient.

\appendix
\section*{Appendix: Initial Pair Probability Computation}\label{app: initial pair}
In this appendix, we calculate the probability that two integers $\mu_1 < \lambda_1$ satisfy at least one of the conditions of \autoref{thm: initial main}.  Fix $N \in \Z_{>0}$, $B \in \Z_{>0} \cup \{\infty\}$, and let $I$ be a subset of $\{1,2,3\}$.  We then define $d(N,B,I)$ to be the proportion of integer pairs $(m,n)$ with $1 \leq m < n \leq N$ satisfying the restriction that $E_i$ holds for all $i \in I$, where the conditions are: 
\begin{itemize}
\item[$E_1$:] $m$ and $n$ have no common prime factors less than $B$,
\item [$E_2$:] $m$ and $n+1$ have no common prime factors less than $B$,
\item [$E_3$:] $m+1$ and $n+1$ have no common prime factors less than $B$, and $2m \geq n$.
\end{itemize}
Note that the case when $B = \infty$ is when the respective integers are coprime. By inclusion-exclusion, the desired probability is given by
$$\lim_{N \to \infty}\sum_{\text{nonempty } I \subseteq \{1,2,3\}} (-1)^{|I|+1} d(N,\infty,I)\,.$$
It remains to calculate $\lim_{N \to \infty}  d(N,\infty,I)$ for each nonempty $I \subseteq \{1,2,3\}$.  First, assume $3 \notin I$.  The Chinese Remainder Theorem implies that, for fixed $B \in \Z_{>0}$, we have
$$\lim_{N \to \infty} d(N,B,I) = \prod_{\text{prime }p \leq B} \left(1 - \frac{|I|}{p^2}\right)\,.$$
We now only need to account for the probability that our pairs of integers of interest are divisible by a large prime $p > B$.  By summing the probabilities for all such $p$, we see that the error $d(N,\infty,I) - d(N,B,I)$ vanishes as $B$ goes to infinity, since
$$\lim_{N \to \infty} d(N,B,I) - d(N,\infty,I) \leq \sum_{p > B} \frac{|I|}{p^2} \leq \int_B^\infty  \frac{|I|}{x^2} dx = \frac{|I|}{B}\,.$$
We can then conclude that 
$$\lim_{N \to \infty} d(N,\infty,I)  = \lim_{B \to \infty} \lim_{N \to \infty} d(N,B,I) =  \prod_{\text{prime }p} \left(1 - \frac{|I|}{p^2}\right)$$

For $k = 1,2,3$, let $\alpha_k = \prod_{\text{prime }p} \left( 1  - \frac{k}{p^2}\right)$.  A similar computation can be carried out when $3 \in I$, i.e, when we require $2m \geq n$ in addition to the divisibility properties, and the resulting probability is then $\alpha_{|I|}/2$.  The quantities $\alpha_k$ for $k = 1,2,3$ have decimal expansions described by the OEIS sequences \href{https://oeis.org/A059956}{A059956}, \href{https://oeis.org/A065474}{A065474}, and \href{https://oeis.org/A206256}{A206256}, respectively \cite{oeis}. We can then conclude that the desired probability is 
$$\frac{5}{2}\alpha_1 - 2\alpha_2 + \frac{1}{2}\alpha_3 \approx 0.93729304\,.$$

\addtocontents{toc}{\protect\setcounter{tocdepth}{0}}

\addtocontents{toc}{\protect\setcounter{tocdepth}{2}}

\nocite{*}
\bibliographystyle{amsplain}
\bibliography{bibliography.bib} 
\end{document}